\numberwithin{equation}{section}
\newtheorem{thm}{Th\'eor\`eme}
\newtheorem{cor}[thm]{Corollaire}
\newtheorem{lem}[thm]{Lemme}
\newtheorem{pro}[thm]{Proposition}
\theoremstyle{definition}
\newtheorem{defi}[thm]{D\'efinition}
\newtheoremstyle{remarque}{}{}{}{}{\it}{.}{\newline}{}
\theoremstyle{remarque}
\newtheorem*{rem}{Remarque}
\newcommand{\asd}[5]{%
\setbox1=\hbox{\ensuremath{^{#1}}}%
\setbox2=\hbox{\ensuremath{_{#2}}}%
\setbox5=\hbox{\ensuremath{#5}}%
\hspace{\ifnum\wd1>\wd2\wd1\else\wd2\fi}%
\ensuremath{\copy5^{\hspace{-\wd1}\hspace{-\wd5}#1\hspace{\wd5}#3}%
_{\hspace{-\wd2}\hspace{-\wd5}#2\hspace{\wd5}#4}%
}}
\DeclareSymbolFont{cyrletters}{OT2}{wncyr}{m}{n}
\DeclareMathSymbol{\Sha}{\mathalpha}{cyrletters}{"58}
\DeclareMathSymbol{\Brusse}{\mathalpha}{cyrletters}{"42}
\newcommand{\z}{\mathbb{Z}}
\newcommand{\q}{\mathbb{Q}}
\newcommand{\f}{\mathbb{F}}
\newcommand{\im}{\mathrm{Im}}
\newcommand{\res}{\mathrm{Res}}
\newcommand{\cores}{\mathrm{Cor}}
\newcommand{\aut}{\mathrm{Aut}\,}
\newcommand{\saut}{\mathrm{SAut}\,}
\renewcommand{\int}{\mathrm{Int}\,}
\newcommand{\gm}{\mathbb{G}_{\mathrm{m}}}
\newcommand{\br}{\mathrm{Br}\,}
\newcommand{\nr}{\mathrm{nr}}
\newcommand{\brun}{\mathrm{Br}_1}
\newcommand{\brnr}{{\mathrm{Br}_{\nr}}}
\newcommand{\al}{\mathrm{al}}
\newcommand{\bral}{\mathrm{Br}_{\al}}
\newcommand{\brnral}{\mathrm{Br}_{\nr,\al}}
\newcommand{\gal}{\mathrm{Gal}}
\newcommand{\sln}{\mathrm{SL}_n}
\newcommand{\pic}{\mathrm{Pic}\,}
\newcommand{\ab}{\mathrm{ab}}
\newcommand{\tor}{\mathrm{tor}}
\newcommand{\red}{\mathrm{red}}
\newcommand{\spec}{\mathrm{Spec}\,}
\renewcommand{\cal}[1]{\mathcal{#1}}
\newcommand{\bb}[1]{\mathbb{#1}}
\newcommand{\torf}{{\rm torf}}
\renewcommand{\b}{\mathfrak{b}}
\title{Groupe de Brauer non ramifi\'e alg\'ebrique des espaces homog\`enes}
\author{Giancarlo Lucchini Arteche\\[5mm]
{\it\small D\'epartement de math\'ematiques, universit\'e Paris-Sud}\\
{\it\small b\^atiment 425, 91405 Orsay cedex, France}\\
{\small giancarlo.lucchini@math.u-psud.fr}
}
\date{}
\begin{document}

\selectlanguage{french}
\maketitle

\begin{abstract}
Via une r\'eduction de la cohomologie galoisienne d'un groupe alg\'ebrique lin\'eaire $G$ \`a celle d'un certain sous-quotient fini, on donne diff\'erentes
formules permettant de calculer le groupe de Brauer non ramifi\'e alg\'ebrique d'un espace homog\`ene $V=G\backslash G'$ avec $G'$ semi-simple simplement connexe
tant sur un corps fini que sur un corps de caract\'eristique 0.\\

Mots cl\'es : groupe de Brauer, espaces homog\`enes, cohomologie galoisienne.
\end{abstract}

\selectlanguage{english}
\begin{abstract}
{\bf The unramified algebraic Brauer group of homogeneous spaces.} Using a reduction of the Galois cohomology of a linear algebraic group $G$ to that of a certain
finite subquotient, we give different formulas allowing the calculation of the unramified algebraic Brauer group of a homogeneous space $V=G\backslash G'$ with
$G'$ semisimple and simply connected, both over a finite field and over an arbitrary field of characteristic $0$.\\

Key words: Brauer group, homogeneous spaces, Galois cohomology.
\end{abstract}

\selectlanguage{french}

\section*{Introduction}
Dans \cite{GLABrnral}, on a donn\'e plusieurs formules permettant de calculer le groupe de Brauer non ramifi\'e alg\'ebrique $\brnral V$ d'un espace homog\`ene
$V=G\backslash G'$ sous un $k$-groupe alg\'ebrique $G'$ semi-simple simplement connexe \`a stabilisateur fini $G$ pour $k$ un corps fini ou bien quelconque de
caract\'eristique $0$. Ces travaux \'etaient un compl\'ement naturel \`a ceux de Borovoi, Demarche et Harari dans \cite{BDH}, texte qui donne des formules
pour le m\^eme pour des espaces homog\`enes (avec des groupes ambiants plus g\'en\'eraux) \`a stabilisateur de type ``ssumult'', cas qui contient notamment
celui des groupes connexes et des groupes commutatifs. Or, une fois trait\'e le cas des stabilisateurs finis, qui sont parmi les groupes non connexes et non
commutatifs les plus simples que l'on puisse imaginer, la question naturelle \`a se poser est dans quelle mesure on peut \'etendre ces formules au cas d'un
stabilisateur \emph{quelconque}, \'etant donn\'e que tout groupe alg\'ebrique lin\'eaire est une extension d'un groupe fini par un groupe connexe. L'objectif de ce
texte est pr\'ecis\'ement la r\'ealisation de cette d\'emarche.

Pour arriver \`a un tel but, on s'est inspir\'e d'un contre-exemple aux formules de Borovoi, Demarche et Harari, fourni par eux-m\^emes dans
\cite[Prop. 8.4]{BDH}. Ce r\'esultat montre que l'on peut obtenir des groupes qui ne sont pas de type ``ssumult'' pour lesquels la formule qu'ils ont
d\'evelopp\'ee ne marche plus. Or, cet exemple consiste tout simplement en la donn\'ee d'un groupe fini non ab\'elien ne v\'erifiant pas leur formule
(et dans \cite{GLABrnral} on a bien montr\'e qu'il en est ainsi pour certains groupes finis non ab\'eliens), puis d'un sous-groupe ab\'elien de celui-ci
que l'on plonge dans un tore, obtenant ainsi une extension non triviale d'un groupe fini par un tore. Le fait que le groupe de Brauer non ramifi\'e alg\'ebrique
d'un espace homog\`ene avec un tel stabilisateur soit le m\^eme que celui d'un espace homog\`ene avec le stabilisateur fini dont ils sont partis sugg\`ere
le principe technique suivant :\\

\emph{Pour un espace homog\`ene $V=G\backslash G'$ \`a stabilisateur $G$ non connexe, toute l'information concernant le groupe de Brauer non ramifi\'e alg\'ebrique
$\brnral V$ est contenue dans un sous-groupe fini de $G$, extension du groupe des composantes connexes de $G$ par un sous-groupe fini de $G^\circ$, la composante
connexe neutre de $G$.}\\

\noindent Ainsi, on est d'abord amen\'e \`a r\'esoudre :
\begin{enumerate}
\item la question de l'\emph{existence} de tels sous-groupes ;
\item la question de savoir si effectivement ils ``contiennent toute l'information''.
\end{enumerate}
\vspace{3mm}

La premi\`ere question, laquelle avait d\'ej\`a \'et\'e pos\'ee par Chernousov, Gille et Reichstein dans
\cite[Rem. 3.8]{Gille_Ch_R}, est trait\'ee d'une fa\c con satisfaisante dans ce texte. On montre en effet le r\'esultat suivant :

\begin{pro}\label{proposition existence intro}{\rm [Proposition \ref{proposition existence du sous-groupe fini}]}\\
Soit $k$ un corps de caract\'eristique $p\geq 0$. Soit $G$ un $k$-groupe alg\'ebrique lin\'eaire, extension d'un groupe fini $F$ d'ordre $n$ premier \`a $p$ par un tore $T$
d\'eploy\'e par une extension $L/k$ de degr\'e $d$, aussi premier \`a $p$. Alors il existe un $k$-sous-groupe fini $H$ de $G$ et un diagramme commutatif
\`a lignes exactes
\[\xymatrix{
1 \ar[r] & \asd{}{nd}{}{}{T} \ar[r] \ar@{^{(}->}[d] & H \ar[r] \ar@{^{(}->}[d] & F \ar[r] \ar@{=}[d] & 1 \\
1 \ar[r] & T \ar[r] & G \ar[r] & F \ar[r] & 1,
}\]
o\`u $\asd{}{nd}{}{}{T}$ d\'esigne le sous-groupe de $nd$-torsion de $T$.
\end{pro}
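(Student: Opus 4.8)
The plan is to realize $H$ as a lift of the extension class of $G$ along the inclusion of the torsion subgroup. Write $N=nd$ and consider the multiplication-by-$N$ isogeny on the torus, which over $k$ sits in a short exact sequence of $k$-group schemes $1\to T[N]\to T\xrightarrow{[N]}T\to 1$ with $T[N]$ finite. Since $F$ acts on $T$ through the extension and $T$ is commutative, the extensions of $F$ by $T$ (resp.\ by $T[N]$) as $k$-group schemes are classified by a group $E_k(T)$ — a suitable degree-two cohomology, coinciding with the appropriate $H^2_k(F,T)$ — and the coefficient sequence induces a long exact sequence in which $E_k(T)\xrightarrow{[N]_*}E_k(T)$ is multiplication by $N$. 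The class $[G]\in E_k(T)$ then lifts to a class $[H]\in H^2_k(F,T[N])$ precisely when $N\cdot[G]=0$; such a lift is by construction an extension $1\to T[N]\to H\to F\to 1$ equipped with a morphism of extensions $H\to G$ that is the identity on $F$ and the inclusion on kernels. As a morphism of affine $k$-groups inducing a monomorphism on both sub- and quotient-objects, it is a closed immersion, so $H$ is a finite $k$-subgroup of $G$ fitting into the required diagram, with $H\cap T=T[N]$. Thus everything reduces to the vanishing $N\cdot[G]=0$, i.e.\ to bounding the exponent of $E_k(T)$ by $N=nd$.

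First I would bound the exponent after base change to a splitting field. Let $L/k$ be the (separable, since $d$ is prime to $p$) extension of degree $d$ splitting $T$, so that $T_L\simeq\gm^{\,r}$. Over $L$ the geometric part of the obstruction disappears: using $\underline{\ext}^1(F,\gm)=0$ for the finite étale group $F$ one identifies (in the abelian case) $E_L(T)$ with $H^1(L,\underline{\hom}(F,\gm^r))$, a group of exponent dividing $n$, while in the nonabelian case the same conclusion follows from the transfer $\cores\circ\res=n$ for the trivial subgroup of $F$, available because $F$ has order $n$ prime to $p$. Either way $E_L(T)$ is killed by $n$.

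It then remains to descend from $L$ to $k$, and this is where the factor $d$ enters and is the main obstacle. Using restriction $\res\colon E_k(T)\to E_L(T)$ and corestriction $\cores\colon E_L(T)\to E_k(T)$ along the separable extension $L/k$, one has $\cores\circ\res=[L:k]=d$ on $E_k(T)$. Hence for any $x\in E_k(T)$ the class $d\cdot x=\cores(\res(x))$ is the corestriction of a class killed by $n$, so $n\cdot(d\cdot x)=0$; that is, $E_k(T)$ is killed by $nd=N$. In particular $N\cdot[G]=0$, the class of $G$ lifts to $H^2_k(F,T[N])$, and the construction of the first paragraph produces the desired finite subgroup $H$. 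The delicate points to verify are the exactness and functoriality of the long exact sequence for a possibly nonabelian and possibly non-constant finite étale $F$ (so that $[N]_*$ really is multiplication by $N$ and the transfer on $F$ applies), together with the compatibility of $\res$ and $\cores$ along $L/k$ with this classification; once these are in place, the exponent bound, and hence the existence of $H$, follow.
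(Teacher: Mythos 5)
Your strategy is viable in outline and is packaged genuinely differently from the paper's. You posit a classifying group $E_k(T)$ for $k$-group-scheme extensions of $F$ by $T$, endowed with a Kummer long exact sequence and with $\res/\cores$ both for the trivial subgroup of $F$ and along $L/k$, and you obtain the bound $nd$ as (exponent of $E_L(T)$ divides $n$) $\times$ ($\cores\circ\res=d$ along $L/k$). The paper never introduces such a classifying group: it works with the single class $\alpha$ of the profinite extension $1\to T(\bar k)\to G(\bar k)\rtimes\Gamma_k\to F(\bar k)\rtimes\Gamma_k\to 1$ in the ordinary cohomology group $H^2(F(\bar k)\rtimes\Gamma_k,T(\bar k))$, where $n\alpha=0$ follows at once from $\cores\circ\res=n$ for the index-$n$ subgroup $\Gamma_k$ (the restriction being split by construction); it then lifts $\alpha$ to $n$-torsion coefficients, and the factor $d$ enters for a different reason than in your argument: the Kummer lift need not restrict to a split extension of $\Gamma_k$, its restriction comes from a class $\gamma\in H^1(\Gamma_k,T(\bar k))$ which is killed by $d$ because $T$ is split by $L$, and pushing into $nd$-torsion coefficients kills $\gamma$, yielding an extension split compatibly over $\Gamma_k$; finally the finite subgroup $\bar H\subset G(\bar k)$ is extracted and descended to $k$ by a separate lemma on Galois descent of finite subgroups.

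There are, however, two genuine gaps, and they are exactly the points you defer. First, the exactness of $E_k(T[N])\to E_k(T)\xrightarrow{N}E_k(T)$ is the whole difficulty, not a routine verification: if one models $E_k(\cdot)$ naively as the classes in $H^2(F(\bar k)\rtimes\Gamma_k,\cdot)$ whose restriction to $\Gamma_k$ is split (which is what ``being a $k$-group-scheme extension'' amounts to), then a kernel of a map of long exact sequences is not itself exact, and the obstruction to re-splitting the Kummer lift over $\Gamma_k$ is measured by $H^1(\Gamma_k,T(\bar k))$ --- the paper's diagram chase is precisely the repair, and it is there (not in a transfer along $L/k$) that $d$ is consumed. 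Second, and more concretely, your justification of the exponent bound for $E_L(T)$ in the nonabelian case is unsound as stated: the transfer $\cores\circ\res=n$ for the trivial subgroup of $F$ makes no use of the hypothesis that $T$ is split over $L$, so if it sufficed it would prove verbatim that $E_k(T)$ itself is killed by $n$, making $d$ and the splitting field irrelevant --- contrary to the statement being proved and to the paper's remark at the end of Section 4, which shows one can take the $n$-torsion only over perfect fields of cohomological dimension $\leq 1$. What the transfer honestly kills is the image of $E_L(T)$ in $H^2(F(\bar k)\rtimes\Gamma_L,T(\bar k))$; the kernel of $E_L(T)$ mapping to that group is governed by $H^1(\Gamma_L,T(\bar k))$, and it is only because $T_L\cong\gm^r$ (Hilbert 90) that this kernel vanishes. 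Once you make that input explicit, your paragraph on $E_L(T)$ becomes correct, but you will find you are then running essentially the same $H^1$-of-the-torus argument as the paper, relocated.
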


Si l'on r\'epond \`a la question seulement pour les extensions d'un groupe fini par un tore, il faut remarquer que cela suffit pour notre objectif car, de m\^eme
que chez \cite{BDH}, la partie unipotente et la partie semi-simple du stabilisateur ne jouent aucun r\^ole dans les formules (d'o\`u le fait de se concentrer sur
les groupes de type multiplicatif chez \cite{BDH}). Ainsi, ce sera le quotient $G^\torf$ (cf. les notations ci-dessous) et notamment un sous-groupe fini de celui-ci
(donc un sous-quotient fini de $G$) qui portera toute l'information et sera donc utilis\'e pour calculer le groupe $\brnral V$.\\

La r\'eponse \`a la deuxi\`eme question est donn\'ee par un corollaire \'evident de la proposition 3.1 de \cite{Gille_Ch_R},
que l'on peut \'enoncer comme suit :

\begin{pro}\label{proposition Gille_Ch_R}
Soit $k$ un corps de caract\'eristique $p\geq 0$. Soit $G$ un $k$-groupe alg\'ebrique lin\'eaire, extension d'un groupe fini $F$ d'ordre $n$ premier \`a $p$ par un tore $T$
d\'eploy\'e par une extension $L/k$ de degr\'e $d$, aussi premier \`a $p$. Soit enfin $m=nd$. On suppose qu'il existe un sous-groupe fini $H_0$ de $G$ tel que
la compos\'ee $H_0\to G\to F$ est surjective et l'on note $H$ le $k$-sous-groupe fini de $G$ engendr\'e par $H_0$ et par $\phi_m^{-1}(H_0\cap T)$, o\`u $\phi_m$
repr\'esente la multiplication par $m$ dans $T$.

Alors, pour tout corps $k'\supset k$, l'application naturelle $H^1(k',H)\to H^1(k',G)$ est surjective.
\end{pro}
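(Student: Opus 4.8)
Le plan est de d\'eduire l'\'enonc\'e de \cite[Prop. 3.1]{Gille_Ch_R} : l'essentiel consiste \`a v\'erifier que l'entier $m=nd$ et le sous-groupe $H$ construits ici rel\`event des hypoth\`eses de cette proposition. Le point num\'erique crucial, que je commencerais par \'etablir, est que pour tout corps $k'\supset k$ et toute classe $c\in H^1(k',F)$, l'entier $m=nd$ annule $H^1(k',{}_cT)$, o\`u ${}_cT$ d\'esigne le tore tordu de $T$ par $c$ via l'action de $F$ sur $T$ (et de m\^eme pour ${}_c(T/T_0)$, avec $T_0\subset T$ fini). C'est ce fait, et la fa\c con dont le sous-groupe $\phi_m^{-1}(H_0\cap T)$ est con\c cu pour en tirer parti, qui rendent l'\'enonc\'e corollaire de \cite{Gille_Ch_R}.

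Pour l'annulation, soit $P=\spec A$ le $F$-torseur sur $k'$ associ\'e \`a $c$, avec $A=\prod_i E_i$ une $k'$-alg\`ebre \'etale de rang $n$. Sur chaque corps r\'esiduel $E_i$, le torseur $P$ acquiert un point rationnel, donc $c$ y devient triviale et ${}_cT\times_{k'}E_i\cong T_{E_i}$, tore d\'eploy\'e par l'alg\`ebre \'etale $E_i\otimes_k L$ de rang $d$. Par le th\'eor\`eme 90 de Hilbert et restriction-corestriction le long de cette alg\`ebre, $d$ annule $H^1(E_i,{}_cT)$. Comme $\cores\circ\res$ vaut la multiplication par $\sum_i[E_i:k']=n$ pour l'alg\`ebre \'etale $A$, on en d\'eduit $nd\cdot H^1(k',{}_cT)=\cores\big(d\cdot\res(-)\big)=0$. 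Le m\^eme raisonnement s'applique \`a ${}_c(T/T_0)$, qui reste un tore d\'eploy\'e par $L$.

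Pour conclure via \cite[Prop. 3.1]{Gille_Ch_R}, je repr\'esenterais une classe $\gamma\in H^1(k',G)$ par un cocycle $z$, de projection $\bar z\in Z^1(k',F)$. Comme $H_0$ se surjecte sur $F$, on rel\`eve $\bar z$ en une cocha\^ine $h$ \`a valeurs dans $H_0$ et l'on \'ecrit $z=a\cdot h$ avec $a$ \`a valeurs dans $T$. Le d\'efaut de cocyclicit\'e de $h$ \'etant \`a valeurs dans le groupe fini $H_0\cap T$, la r\'eduction $\bar a$ de $a$ modulo $H_0\cap T$ est un $1$-cocycle \`a valeurs dans ${}_{\bar z}(T/(H_0\cap T))$. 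L'annulation ci-dessus fournit alors $m\,[\bar a]=0$, et la divisibilit\'e par $m$ des points du tore (licite puisque $m$ est premier \`a $p$) permet de modifier $z$ par un cobord de fa\c con que le nouveau cocycle, cohomologue \`a $z$, prenne ses valeurs dans $(H\cap T)\cdot H_0\subseteq H$, o\`u l'on utilise pr\'ecis\'ement l'inclusion $H\cap T\supseteq\phi_m^{-1}(H_0\cap T)$ ; la classe $\gamma$ provient donc de $H^1(k',H)$. La principale difficult\'e sera cette manipulation non ab\'elienne de cocha\^ines : passer proprement au quotient par le sous-groupe fini $H_0\cap T$ pour obtenir une v\'eritable classe de cohomologie galoisienne d'un tore, puis articuler les deux faits de divisibilit\'e (annulation de $H^1$ par $m$ d'une part, $m$-divisibilit\'e des points du tore d'autre part). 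Tout \'etant valable pour $k'$ arbitraire, on obtient la surjectivit\'e annonc\'ee.
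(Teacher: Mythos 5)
Votre argument est correct et suit essentiellement la m\^eme voie que le texte, lequel ne donne d'ailleurs aucune d\'emonstration et pr\'esente simplement l'\'enonc\'e comme un corollaire \'evident de la proposition 3.1 de \cite{Gille_Ch_R} : la d\'eduction que vous explicitez --- l'argument de restriction--corestriction le long du $F$-torseur puis le long de $L/k$ montrant que $m=nd$ annule $H^1(k',{}_{c}T)$ et ses quotients tordus, ce qui est pr\'ecis\'ement l'hypoth\`ese num\'erique de ladite proposition --- est bien celle que l'auteur sous-entend. Votre troisi\`eme paragraphe, qui reconstitue correctement la manipulation de cocycles sous-jacente (rel\`evement de $\bar z$ dans $H_0$, passage au quotient par $H_0\cap T$ pour obtenir un vrai cocycle \`a valeurs dans un tore tordu, annulation par $m$ puis absorption par $m$-divisibilit\'e dans $\phi_m^{-1}(H_0\cap T)$), red\'emontre en fait la proposition cit\'ee plut\^ot qu'il ne l'applique, et fournit ainsi plus de d\'etails que la source elle-m\^eme.
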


La surjectivit\'e de l'application $H_0\to F$ est bien entendu au niveau des $\bar k$-points. Autrement dit, on peut voir $F$ comme un quotient de
$H_0$. On remarque par ailleurs qu'il est facile de v\'erifier que le sous-groupe $H$ ci-dessus est bien un groupe fini. En effet, pour $h_0\in H_0$ et
$h_1\in H_1:=\phi_m^{-1}(H_0\cap T)$, on a $h_0h_1h_0^{-1}\in H_1$, ce qui montre que l'on peut \'ecrire tout \'el\'ement $h$ de $H$ comme un
produit $h=h_0h_1$ avec $h_0\in H_0$ et $h_1\in H_1$. Ces deux sous-groupes \'etant finis, la finitude de $H$ en d\'ecoule.

\begin{rem}
Cette proposition, ou encore la proposition 3.1 de \cite{Gille_Ch_R}, est un avatar (et une partie importante de la d\'emonstration) du th\'eor\`eme 1.2 du
m\^eme article. Ce dernier th\'eor\`eme vise notamment \`a g\'en\'eraliser un autre r\'esultat des m\^emes auteurs (cf. \cite[Thm. 1.1]{Gille_Ch_R_2}),
lequel g\'en\'eralise pour sa part un r\'esultat de Galitski\u\i\, assurant l'existence des \emph{quasi-sections} (cf. \cite{Galitskii} ou
\cite[Thm. 4.2]{Gille_Ch_R_2}). En effet, les r\'esultats de Chernousov, Gille et Reichstein peuvent \^etre intepret\'es dans le langage des
\emph{$(G,H)$-sections} (cf. \cite[Thm. 1.1']{Gille_Ch_R_2}), notion due \`a Katsylo (cf. \cite{Katsylo}). Sous cette traduction, on peut prouver facilement
le r\'esultat de Galitski\u\i, ainsi que le ``lemme sans nom'' (cf. \cite[Lem. 4.4]{Gille_Ch_R_2} ou encore \cite[Cor. 3.9]{ColliotSansucChili}), dont la
version la plus ancienne semble se trouver dans \cite{Bogomolov_Katsylo}. Pour un survol de ces notions, on renvoie \`a \cite{Popov}.
\end{rem}

Lorsqu'on associe cette proposition de nature cohomologique \`a la formule cohomologique dans \cite[Thm. 4.1]{GLABrnral}, on est capable de suivre
la d\'emarche faite dans \cite{GLABrnral} pour obtenir, dans l'ordre,
\begin{itemize}
\item une formule cohomologique pour $\brnral V$ sur un corps fini (Th\'eor\`eme \ref{theoreme cohomologique}) ;
\item une formule alg\'ebrique pour $\brnral V$ sur un corps fini (Th\'eor\`eme \ref{theoreme formule sur fq}) ;
\item une formule alg\'ebrique pour $\brnral V$ sur un corps de caract\'eristique $0$ (Th\'eor\`eme \ref{theoreme formule en car 0}) ;
\item une formule cohomologique pour $\brnral V$ sur un corps local de caract\'eristique $0$, sous quelques hypoth\`eses suppl\'ementaires (Proposition
\ref{proposition cohomologique corps local}).
\end{itemize}
Ici, de m\^eme que dans \cite{GLABrnral}, on entend par formule cohomologique (resp. alg\'ebrique) une formule d\'ependant des th\'eor\`emes de
dualit\'e en cohomologie galoisienne pour des corps locaux (resp. une formule d\'ependant de la structure de groupe alg\'ebrique du stabilisateur).\\

La composition de cet article est donc la suivante. Dans la section \ref{section notations et rappels}, on fixe toutes les notations et l'on donne quelques rappels
pr\'eliminaires, tous d\'ej\`a mentionn\'es dans \cite{GLABrnral}. Les r\'esultats sur un corps fini, modulo
l'existence d'un ``bon'' sous-quotient fini, sont donn\'es dans la section \ref{section formules corps fini}. La section \ref{section formules caracteristique 0}
fait de m\^eme pour les corps de caract\'eristique $0$. On a d\'ecid\'e de pr\'esenter ces r\'esultats avant ceux d'existence parce que, dans beaucoup
d'exemples de groupes lin\'eaires non connexes, il est possible de trouver des sous-groupes finis explicites avec lesquels on peut faire les calculs,
rendant ainsi inutile la proposition \ref{proposition existence intro}. Dans la section \ref{section existence} on d\'emontre la proposition
\ref{proposition existence intro} et l'on \'enonce \`a nouveau les r\'esultats pr\'ec\'edents (Corollaires \ref{corollaire formule corps finis avec existence} et
\ref{corollaire formule car 0 avec existence}) dans le cas o\`u l'on n'aurait pas acc\`es imm\'ediat \`a des ``bons'' sous-quotients finis. On profite aussi pour
donner une g\'en\'eralisation (corollaire \ref{corollaire sous-groupe fini surjectif en cohomologie 2}) d'un des r\'esultats de Chernousov, Gille et Reichstein,
\`a savoir leur corollaire 1.4 dans \cite{Gille_Ch_R}, que l'on rend valable sur un corps quelconque (alors que leur r\'esultat ne vaut que sur un corps alg\'ebriquement
clos). Enfin, dans la section \ref{section BM}, on donne un exemple d'application de ces r\'esultats. Il s'agit de l'obstruction de Brauer-Manin associ\'ee au groupe
$\brnral V$, pour laquelle on montre qu'elle ``ne voit pas les places r\'eelles'', tout comme dans \cite[\S 5.2]{GLABrnral} dans le cas des stabilisateurs finis.

Ce texte a aussi une section en appendice o\`u l'on pr\'esente une petite am\'elioration d'une remarque non publi\'ee faite par Colliot-Th\'el\`ene (cf. 
\cite{ColliotBrnr}) concernant le groupe de Brauer non ramifi\'e des espaces homog\`enes sous un groupe semi-simple simplement connexe. Le r\'esultat en question
est notamment utilis\'e dans la preuve du th\'eor\`eme \ref{theoreme formule en car 0} pour se ramener au cas o\`u le stabilisateur a une composante connexe neutre
r\'eductive.

\paragraph{Remerciements}
L'auteur tient \`a remercier David Harari et les rapporteurs pour la lecture soigneuse qu'ils ont fait de ce texte, ainsi que l'\'editeur pour ses
commentaires. Jean-Louis Colliot-Th\'el\`ene, Cyril Demarche et Philippe Gille m\'eritent par ailleurs toute ma gratitude pour de tr\`es importantes
discussions \`a l'origine de ces r\'esultats.

\section{Notations et rappels pr\'eliminaires}\label{section notations et rappels}
Dans tout ce texte, $k,k'$ et $L$ repr\'esentent des corps de caract\'eristique $p\geq 0$. Pour un corps $k$, on note toujours $\bar k$ une cl\^oture s\'eparable de
$k$ et $\Gamma_k:=\gal(\bar k/k)$ le groupe de Galois absolu. Dans le cas des corps finis, $q$ repr\'esente toujours le cardinal du corps
et il s'agit donc d'une puissance de $p$.

On note $V$ une $k$-vari\'et\'e, laquelle est toujours suppos\'ee lisse et g\'eom\'etriquement int\`egre. On notera $X$ une
$k$-compactification de $V$ (aussi suppos\'ee lisse), i.e. une $k$-vari\'et\'e propre munie d'une $k$-immersion ouverte $V\hookrightarrow X$.
\footnote{On rappelle que l'existence d'une compactification lisse est assur\'ee en caract\'eristique
0 via le th\'eor\`eme d'Hironaka, tandis qu'en caract\'eristique positive la dite existence est une question toujours ouverte.} Soit $G$ un
$k$-groupe alg\'ebrique lin\'eaire dont la composante connexe neutre est toujours suppos\'ee r\'eductive lorsque $k$ est de caract\'eristique positive. 
Pour un tel groupe, on note
\begin{itemize}
\item $D(G)$ le sous-groupe d\'eriv\'e de $G$ ;
\item $G^\ab=G/D(G)$ l'ab\'elianis\'e de $G$ ;
\item $\hat G=\hat G^\ab$ le $\Gamma_k$-module des caract\`eres de $G$ ;
\item $G^\circ$ la composante connexe de l'\'el\'ement neutre de $G$ ;
\item $F=G/G^\circ$ le groupe des composantes connexes de $G$ (c'est un groupe fini) ;
\item $R_\mathrm{u}(G)$ le radical unipotent de $G^\circ$ (c'est un groupe unipotent, trivial en caract\'eristique positive d'apr\`es notre hypoth\`ese) ;
\item $G^\red=G^\circ/R_\mathrm{u}(G)$ (c'est un groupe r\'eductif) ;
\item $G^\tor$ l'ab\'elianis\'e de $G^\red$ (c'est un tore) ;
\item $G^\torf=G/\ker[G^\circ\twoheadrightarrow G^\tor]$ (c'est une extension de $F$ par $G^\tor$).
\end{itemize}
On remarque que le sous-groupe $R_\mathrm{u}(G)$ est caract\'eristique dans $G$, ce qui nous dit que $\ker[G^\circ\twoheadrightarrow G^\tor]$ est
bien distingu\'e dans $G$ et donc que le quotient $G^\torf$ est bien d\'efini. Enfin, on supposera partout que l'ordre du groupe $F$ (\`a savoir, le
degr\'e du morphisme structural $F\to \spec k$) est premier \`a la caract\'eristique $p$ du corps $k$. Le cardinal de $F(\bar k)$ est ainsi suppos\'e
toujours premier \`a $p$. On regardera souvent le $k$-groupe $G$ comme immerg\'e dans un $k$-groupe semi-simple simplement connexe (par
exemple $\sln$), pour lequel on r\'eserve la notation $G'$. La lissit\'e de $G'$ nous dit par ailleurs que la $k$-vari\'et\'e $V:=G\backslash G'$ est lisse
et g\'eom\'etriquement int\`egre (cf. \cite[VI$_\text{B}$, Prop. 9.2]{SGA3I}) et correspond \`a un espace homog\`ene sous $G'$.

Pour une extension de corps $L/k$ et pour $V$ une $k$-vari\'et\'e, on note $V_L:=V\times_k L$ la $L$-vari\'et\'e obtenue par changement de base.
Pour toute vari\'et\'e $V$, on note $H^i(V,\cdot)$ les groupes de cohomologie \'etale et $H^i(k,\cdot)$ les groupes de cohomologie galoisienne 
classiques (qui co\"\i ncident avec les groupes de cohomologie \'etale pour $V=\spec k$). Pour $i=1$, on peut aussi d\'efinir des ensembles de
cohomologie non ab\'elienne, lesquels seront toujours not\'es $H^1(V,\cdot)$ et $H^1(k,\cdot)$ comme dans le cas ab\'elien (ces deux ensembles
co\"\i ncident aussi pour $V=\spec k$). Si l'on \'ecrit $\gm$ pour le groupe multiplicatif, le groupe de Brauer $\br V$ est d\'efini comme le groupe
$H^2(V,\gm)$. Pour $n> 1$ premier \`a $p$, on note $\mu_n$ le groupe alg\'ebrique (fini et
lisse) des racines $n$-i\`emes de l'unit\'e. La $n$-torsion du groupe de Brauer $\br V$ est alors donn\'ee par un quotient de $H^2(V,\mu_n)$. On note
aussi, pour un groupe $A$ ab\'elien et de torsion (par exemple $\br V$), $A\{p'\}$ son sous-groupe de torsion premi\`ere \`a $p$. Pour les tores, on
utilise la notation $\asd{}{n}{}{}{T}$ pour d\'esigner leur $n$-torsion.\\

Passons aux rappels. Pour la d\'efinition du groupe de Brauer non ramifi\'e $\brnr V$ d'une $k$-vari\'et\'e $V$, on renvoie \`a \cite{ColliotSantaBarbara}
ou encore \`a \cite{ColliotSansucChili}. On se limite \`a rappeler que, lorsque l'on dispose d'une compactification lisse $X$ de $V$, on a l'\'egalit\'e
$\brnr V\{p'\}=\br X\{p'\}$, o\`u $p$ correspond \`a la caract\'eristique du corps $k$, (ce sont des applications du th\'eor\`eme de puret\'e de Grothendieck, cf.
\cite[Thm. 4.1.1, Prop. 4.2.3]{ColliotSantaBarbara}). Le groupe de Brauer alg\'ebrique est d\'efini comme
\[\bral V:=\frac{\ker[\br V\to\br V_{\bar k}]}{\im[\br k\to\br V]}.\]
On note aussi $\brun V:=\ker[\br V\to\br V_{\bar k}]$ d'o\`u $\bral V=\brun V/\br k$ lorsqu'on note par abus $\br k$ sa propre image dans $\br V$.
Le groupe de Brauer non ramifi\'e alg\'ebrique $\brnral V$ est alors l'image de $\brnr V\cap \brun V$ dans $\bral V$ par la projection naturelle.

De m\^eme que dans \cite[\S 4]{GLABrnral}, on rappelle que pour une $k$-vari\'et\'e $V$ v\'erifiant $V(k)\neq\emptyset$ et $\bar k[V]^*/\bar k^*=1$,
le groupe de Brauer alg\'ebrique $\bral V$ admet une description en termes du groupe de Picard g\'eom\'etrique $\pic V_{\bar k}$ (cf. \cite[Lem. 6.3]{Sansuc81}) :
\begin{equation}\label{equation brnr pour V}
\bral V\xrightarrow{\sim} H^1(k,\pic V_{\bar{ k}}).
\end{equation}
De plus, on a de m\^eme pour une $k$-compactification lisse $X$ de $V$ :
\begin{equation}\label{equation brnr pour X}
\bral X\xrightarrow{\sim} H^1(k,\pic X_{\bar{ k}}).
\end{equation}
Enfin, d'apr\`es ce qui a \'et\'e dit ci-dessus, $\bral X\{p'\}$ correspond \`a la partie non ramifi\'ee (et de torsion premi\`ere \`a $p$) du groupe de Brauer
alg\'ebrique de $V$ que l'on note $\brnral V\{p'\}$.\\

Soit maintenant $G$ un $k$-groupe alg\'ebrique lin\'eaire lisse plong\'e dans $G'$ un $k$-groupe alg\'ebrique semi-simple simplement connexe et
soit $V:=G\backslash G'$ la $k$-vari\'et\'e quotient. Le groupe $G'$ \'etant semi-simple, on a que $\bar k[G']^*/\bar k^*=1$ (c'est le lemme de Rosenlicht,
cf. \cite[Lem. 6.5]{Sansuc81}). Ceci nous dit que $\bar k[V]^*/\bar k^*=1$, puisque toute fonction inversible sur $V_{\bar k}$ donne une fonction inversible
sur $G'_{\bar k}$. On en d\'eduit que l'on a la description du groupe $\bral V$ donn\'ee par l'isomorphisme \eqref{equation brnr pour V} ci-dessus.
On conclut cette section en rappelant (cf. toujours \cite[\S 4]{GLABrnral}) que $\pic V_{\bar k}=\hat G=\hat G^\ab$, d'o\`u l'on voit que
$\bral V=H^1(k,\pic V_{\bar k})$ ne d\'epend que de $G$, et m\^eme plus pr\'ecis\'ement de son ab\'elianis\'e. De plus, puisque dans tout ce texte les groupes
finis seront d'ordre premier \`a $p$, le groupe $\hat G^\ab$ n'aura pas de $p$-torsion. Alors, d\`es que les extensions d\'eployant la partie torique
de $G^\ab$ seront d'ordre premier \`a $p$, le groupe $\bral V$ n'aura pas non plus de $p$-torsion. On se permettra alors dans ces cas d'utiliser la notation
$\brnral V$ au lieu de $\brnral V\{p'\}$.

\section{Formules sur un corps fini}\label{section formules corps fini}
Soit maintenant $k$ un corps fini de cardinal $q$ une puissance de $p$, soit $G$ un $k$-groupe alg\'ebrique lin\'eaire \`a composante connexe neutre $G^\circ$ r\'eductive et
soit $F=G/G^\circ$. On rappelle que, lorsque l'ordre de $F$ est premier \`a $p$, le cup-produit nous donne
l'accouplement parfait suivant (cf. \cite[Thm. 7.2.9]{NSW})
\[H^1(k((t)),\hat G^\ab)\times H^1(k((t)),G^\ab)\to H^2(k((t)),\gm)=\br k((t))\cong\q/\z.\]
D'autre part, en consid\'erant la remarque \`a la fin de la section 4.1 de \cite{GLABrnral}, on sait que l'on peut compter sur le r\'esultat suivant :

\begin{pro}\label{proposition cohomologique generale}
Soit $G$ un $k$-groupe alg\'ebrique lin\'eaire \`a composante connexe neutre $G^\circ$ r\'eductive et tel que $F=G/G^\circ$ est d'ordre $n$ premier \`a $q$. Soit
$V=G\backslash G'$ pour un plongement de $G$ dans $G'$ semi-simple simplement connexe. En identifiant $\bral V$ avec $H^1(k,\hat G^\ab)$ comme dans
l'\'equation \eqref{equation brnr pour V}, le groupe de Brauer non ramifi\'e alg\'ebrique $\brnral V\{p'\}$ de $V$ est donn\'e par les \'el\'ements
$\alpha\in H^1(k,\hat G^\ab)\{p'\}$ v\'erifiant la propri\'et\'e suivante :

Pour toute extension finie $k'$ de $k$, l'image $\alpha'$ de $\alpha$ dans $H^1(k'((t)),\hat G^\ab)$ est orthogonale au sous-ensemble
$\im[H^1(k'((t)),G)\to H^1(k'((t)),G^\ab)]$ pour l'accouplement ci-dessus.
\end{pro}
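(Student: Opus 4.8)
La preuve se calquerait sur celle du cas des stabilisateurs finis dans \cite[\S 4.1]{GLABrnral}, l'unique ingrédient nouveau étant le remplacement du groupe fini par le groupe $G$ à composante neutre réductive. Le plan est le suivant. On identifierait d'abord $\bral V$ avec $H^1(k,\hat G^\ab)$ via \eqref{equation brnr pour V}, en rappelant que $\brnral V\{p'\}=\bral X\{p'\}$ pour $X$ une $k$-compactification lisse de $V$. La partie non ramifiée de $\bral V$ se caractérise alors par l'annulation des résidus $\partial_v(\alpha)$ en tout point de codimension $1$ de $X$ ; comme $\alpha$ provient déjà d'une classe de $\br V$, seuls les résidus aux composantes du bord $X\setminus V$ peuvent être non nuls, ce qui ramène la question à un nombre fini de valuations divisorielles de $k(V)$.

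Le cœur de l'argument serait de traduire ces conditions de résidus en la condition d'orthogonalité de l'énoncé. On utiliserait pour cela la fibration $G'\to V$, dont les fibres sont des $G$-torseurs, d'où la suite exacte d'ensembles pointés
\[G'(k'((t)))\to V(k'((t)))\to H^1(k'((t)),G)\to H^1(k'((t)),G').\]
Comme $G'$ est semi-simple simplement connexe et $k'((t))$ un corps local à corps résiduel fini $k'$, on a $H^1(k'((t)),G')=0$, si bien que l'évaluation $V(k'((t)))\to H^1(k'((t)),G)$ est surjective : toute classe $\xi$ provient d'un point $x\in V(k'((t)))$. La compatibilité fonctorielle entre évaluation et cup-produit établie dans \cite{GLABrnral} assurerait alors que $x^*\alpha\in\br k'((t))$ coïncide avec l'accouplement $\langle\alpha',\bar\xi\rangle$ de l'image $\alpha'$ de $\alpha$ dans $H^1(k'((t)),\hat G^\ab)$ avec l'image $\bar\xi$ de $\xi$ dans $H^1(k'((t)),G^\ab)$.

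Il resterait à relier l'annulation du résidu de $x^*\alpha$ à la nullité de ce cup-produit. Puisque $k'$ est fini, on a $\br k'=0$, donc $\br k'[[t]]=0$ et l'application résidu $\br k'((t))\xrightarrow{\sim}\q/\z$ est un isomorphisme ; ainsi $x^*\alpha$ est non ramifié si et seulement si $x^*\alpha=0$, c'est-à-dire si et seulement si $\alpha'$ est orthogonal à $\bar\xi$. Pour la nécessité, si $\alpha$ s'étend à $X$, le critère valuatif de propreté fournirait pour chaque $x\in V(k'((t)))$ un $k'[[t]]$-point prolongeant $x$, le long duquel $x^*\alpha$ est non ramifié, d'où $\langle\alpha',\bar\xi\rangle=0$. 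Pour la suffisance, on vérifierait que le résidu $\partial_v(\alpha)$ en une valuation divisorielle du bord se lit sur de tels arcs formels à valeurs dans les $k'((t))$, la nullité de tous les cup-produits forçant alors l'annulation de tous les résidus, donc $\alpha\in\bral X\{p'\}=\brnral V\{p'\}$.

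La difficulté principale serait précisément cette dernière implication : le fait géométrique que tout résidu divisoriel de $\alpha$ puisse être détecté par les spécialisations des arcs $\spec k'((t))\to V$ (en utilisant, sur un corps fini, que $H^1(\kappa(D_i),\q/\z)$ est déterminé par l'évaluation aux points fermés), ainsi que la compatibilité entre évaluation et cup-produit. Ce sont exactement les points techniques traités dans \cite[\S 4.1]{GLABrnral} pour les stabilisateurs finis, et l'on vérifierait qu'ils s'étendent mot pour mot au cas d'une composante neutre réductive, l'accouplement parfait $H^1(k'((t)),\hat G^\ab)\times H^1(k'((t)),G^\ab)\to\q/\z$ se substituant à celui du cas fini.
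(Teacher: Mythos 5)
Le texte ne d\'emontre pas cette proposition : il renvoie simplement \`a la remarque figurant \`a la fin de la section 4.1 de \cite{GLABrnral}, o\`u il est observ\'e que l'argument du cas des stabilisateurs finis s'\'etend tel quel aux stabilisateurs \`a composante connexe neutre r\'eductive. Votre esquisse reconstruit fid\`element cet argument (\'evaluation aux points \`a valeurs dans $k'((t))$, nullit\'e de $H^1(k'((t)),G')$ par Bruhat--Tits, compatibilit\'e entre \'evaluation et cup-produit, d\'etection des r\'esidus par sp\'ecialisation aux points ferm\'es sur un corps fini) ; c'est donc essentiellement la m\^eme d\'emarche. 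Seule r\'eserve : en caract\'eristique $p>0$ l'existence d'une compactification lisse $X$ est une question ouverte, de sorte que les conditions de r\'esidus doivent \^etre formul\'ees pour toutes les valuations divisorielles de $k(V)$ triviales sur $k$ plut\^ot que sur les seules composantes du bord $X\setminus V$ --- ce qui ne change rien au fond de l'argument, la d\'etection par les arcs formels valant pour chacune de ces valuations.
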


Maintenant, si l'on se donne une inclusion $H\hookrightarrow G^\torf$, on a clairement un $k$-morphisme induit $H^\ab\to G^\ab$ car la projection $G\to G^\ab$
se factorise par $G^\torf$. Via ce morphisme, l'accouplement ci-dessus en induit un autre (pas forc\'ement parfait) qui lui est compatible. C'est-\`a-dire que
l'on a le diagramme commutatif
\[\xymatrix{
H^1(k((t)),H^\ab)\times H^1(k((t)),\hat G^\ab)  \ar[r] \ar@<-15mm>[d] \ar@{=}@<15mm>[d] & \br k((t)) \ar@{=}[d] \\
H^1(k((t)),G^\ab)\times H^1(k((t)),\hat G^\ab)  \ar[r] & \br k((t)).
}\]

\begin{thm}\label{theoreme cohomologique}
Soit $G$ un $k$-groupe alg\'ebrique lin\'eaire \`a composante connexe neutre $G^\circ$ r\'eductive et tel que $F=G/G^\circ$ est d'ordre $n$ premier \`a $q$. Soit
$V=G\backslash G'$ pour un plongement de $G$ dans $G'$ semi-simple simplement connexe. On suppose que le tore $G^\tor=(G^\torf)^\circ$ est d\'eploy\'e par
une extension $L/k$ de degr\'e $d$ premier \`a $q$ et l'on note $m=nd$. On suppose aussi qu'il existe un $k$-sous-groupe fini $H_0$ de $G^\torf$ d'ordre premier
\`a $q$ tel que la compos\'ee $H_0\to G^\torf\to F$ est surjective et l'on note $H$ le sous-groupe (fini aussi) de $G^\torf$ engendr\'e par $H_0$ et par
$\phi_m^{-1}(H_0\cap G^\tor)$, o\`u $\phi_m$ d\'esigne la multiplication par $m$ dans $G^\tor$.

Alors, en identifiant $\bral V$ avec $H^1(k,\hat G^\ab)$, le groupe de Brauer non ramifi\'e alg\'ebrique $\brnral V$ de $V$ est donn\'e par les \'el\'ements
$\alpha\in H^1(k,\hat G^\ab)$ v\'erifiant la propri\'et\'e suivante :

Pour toute extension finie $k'$ de $k$, l'image $\alpha'$ de $\alpha$ dans $H^1(k'((t)),\hat G^\ab)$ est orthogonale au sous-ensemble
$\im[H^1(k'((t)),H)\to H^1(k'((t)),H^\ab)]$ pour l'accouplement ci-dessus.
\end{thm}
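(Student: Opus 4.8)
Le plan est de combiner la Proposition \ref{proposition cohomologique generale} avec la Proposition \ref{proposition Gille_Ch_R} (dans sa version torique, appliqu\'ee \`a $G^\torf$) pour r\'eduire la condition d'orthogonalit\'e de l'image de $H^1(k'((t)),G)$ dans $H^1(k'((t)),G^\ab)$ \`a celle de l'image de $H^1(k'((t)),H)$ dans $H^1(k'((t)),H^\ab)$. D'abord, je remarquerais que, d'apr\`es la description de $\bral V$ \`a la fin de la section \ref{section notations et rappels}, le groupe $\brnral V$ ne d\'epend que de $G^\ab$, lequel se factorise \`a travers $G^\torf$ ; on peut donc remplacer $G$ par $G^\torf$ sans changer ni $\bral V$ ni l'image pertinente dans $H^1(k'((t)),G^\ab)=H^1(k'((t)),(G^\torf)^\ab)$. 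De plus, puisque l'ordre de $F$ est premier \`a $q$ et que $G^\tor$ est d\'eploy\'e par une extension de degr\'e $d$ premier \`a $q$, le groupe $\bral V=H^1(k,\hat G^\ab)$ n'a pas de $p$-torsion, de sorte que $\brnral V=\brnral V\{p'\}$ et que la Proposition \ref{proposition cohomologique generale} s'applique telle quelle.

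L'\'etape cl\'e consiste \`a \'etablir l'\'egalit\'e des deux sous-ensembles orthogonalisants, c'est-\`a-dire
\[\im[H^1(k'((t)),G^\torf)\to H^1(k'((t)),(G^\torf)^\ab)]=\im[H^1(k'((t)),H)\to H^1(k'((t)),H^\ab)],\]
o\`u l'on utilise que $(G^\torf)^\ab=G^\ab$ et que $H^\ab\to G^\ab$ rend le diagramme pr\'ec\'edant l'\'enonc\'e commutatif. L'inclusion $\supseteq$ est imm\'ediate par fonctorialit\'e et compatibilit\'e des accouplements. Pour l'inclusion $\subseteq$, j'appliquerais la Proposition \ref{proposition Gille_Ch_R} au groupe $G^\torf$, qui est bien une extension du groupe fini $F$ d'ordre $n$ premier \`a $p$ par le tore $T=G^\tor$ d\'eploy\'e par $L/k$ de degr\'e $d$ premier \`a $p$, avec $m=nd$ : l'hypoth\`ese fournit pr\'ecis\'ement un sous-groupe fini $H_0$ se surjectant sur $F$, et $H$ est d\'efini comme dans cette proposition. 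On en d\'eduit que, pour tout corps $k'\supset k$ (et donc en particulier pour $k'((t))$), l'application $H^1(k'((t)),H)\to H^1(k'((t)),G^\torf)$ est surjective. En composant avec la projection $H^1(k'((t)),G^\torf)\to H^1(k'((t)),G^\ab)$ et en utilisant que cette projection se factorise par $H^1(k'((t)),H^\ab)$ gr\^ace au morphisme $H^\ab\to G^\ab$, on obtient l'inclusion $\subseteq$ voulue.

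Une fois cette \'egalit\'e d'images \'etablie, la conclusion est purement formelle : un \'el\'ement $\alpha\in H^1(k,\hat G^\ab)$ est orthogonal \`a l'image de $H^1(k'((t)),G)$ pour tout $k'$ si et seulement s'il est orthogonal \`a l'image de $H^1(k'((t)),H)$ pour tout $k'$, ces deux images co\"\i ncidant dans $H^1(k'((t)),\hat G^\ab)^\vee$ via l'accouplement parfait du d\'ebut de la section. La caract\'erisation de $\brnral V$ donn\'ee par la Proposition \ref{proposition cohomologique generale} se transcrit donc directement en la caract\'erisation de l'\'enonc\'e.

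Le principal obstacle que j'anticipe est de nature technique plut\^ot que conceptuelle : il faut s'assurer que l'on peut bien appliquer la Proposition \ref{proposition Gille_Ch_R} \emph{au corps $k'((t))$ lui-m\^eme} (et pas seulement \`a $k'$), ce qui est licite puisque cette proposition vaut pour tout corps $k'\supset k$ de caract\'eristique $p$, et que $k'((t))$ en est un d\`es que les ordres en jeu restent premiers \`a $p$ ; ici $n$ et $d$ sont premiers \`a $q=p^r$, donc \`a $p$, ce qui est exactement l'hypoth\`ese requise. Il faut \'egalement v\'erifier soigneusement que le sous-groupe $H$, construit \`a l'int\'erieur de $G^\torf$, est bien un $k$-sous-groupe fini (ce qui r\'esulte de l'argument de finitude donn\'e apr\`es la Proposition \ref{proposition Gille_Ch_R}) et que le morphisme induit $H^\ab\to G^\ab$ fait commuter les accouplements, point d\'ej\`a acquis par le diagramme commutatif qui pr\'ec\`ede l'\'enonc\'e du th\'eor\`eme.
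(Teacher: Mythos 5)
Your argument is correct and follows essentially the same route as the paper: Proposition \ref{proposition Gille_Ch_R}, applied to the extension $G^{\torf}$ of $F$ by $G^{\tor}$, gives the surjectivity of $H^1(k'((t)),H)\to H^1(k'((t)),G^{\torf})$, and the compatibility of the pairings together with Proposition \ref{proposition cohomologique generale} then yields the equivalence of the two orthogonality conditions. You are in fact slightly more explicit than the paper about the passage from $G$ to $G^{\torf}$ (the paper directly writes a surjection $H^1(k'((t)),H)\twoheadrightarrow H^1(k'((t)),G)$ although $H\subset G^{\torf}$), both arguments resting at the same point on the assertion that replacing $G$ by $G^{\torf}$ does not change the relevant image in $H^1(k'((t)),G^{\ab})$.
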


\begin{proof}
La proposition \ref{proposition Gille_Ch_R} nous dit que l'application
\[H^1(k'((t)),H)\to H^1(k'((t)),G),\]
est surjective. En rappelant alors la compatibilit\'e des accouplement ci-dessus, on voit que l'on a le diagramme commutatif suivant :
\[\xymatrix{
H^1(k'((t)),H) \ar[r] \ar@{->>}[d] & H^1(k'((t)),H^\ab) \times H^1(k'((t)),\hat G^\ab)  \ar[r] \ar@{=}@<15mm>[d] \ar@<-15mm>[d] & \br k'((t)) \ar@{=}[d] \\
H^1(k'((t)),G) \ar[r] & H^1(k'((t)),G^\ab) \times H^1(k'((t)),\hat G^\ab)  \ar[r] & \br k'((t)).
}\]
De ce diagramme il vient de fa\c con imm\'ediate que, pour $\alpha\in H^1(k,\hat G^\ab)$, le fait que son image dans $H^1(k'((t)),\hat G^\ab)$ soit orthogonale
\`a l'image de $H^1(k'((t)),G)$ dans $H^1(k'((t)),G^\ab)$ est \'equivalent au fait d'\^etre orthogonale \`a l'image de $H^1(k'((t)),H)$ dans $H^1(k'((t)),H^\ab)$
pour le deuxi\`eme accouplement. La proposition \ref{proposition cohomologique generale} nous permet alors de conclure.
\end{proof}

De ce th\'eor\`eme, et gr\^ace aux m\'ethodes d\'evelopp\'ees dans \cite[\S4.2]{GLABrnral}, on d\'eduit une formule alg\'ebrique pour le
groupe $\brnral V$. Pour ce faire, on rappelle des d\'efinitions donn\'ees dans \cite[\S4.2]{GLABrnral}

\begin{defi}
Soit $H$ un $k$-groupe fini d'ordre premier \`a $q$. Soit $s\in\Gamma_k$ l'\'el\'ement correspondant au $q$-Frobenius (i.e. l'application
$\bar k\to \bar k:x\mapsto x^q$). On d\'efinit une application bijective $\varphi_{q,H}:H(\bar k)\to H(\bar k)$ par la formule
\[\varphi_{q,H}(b)=\asd{s^{-1}}{}{q}{}{b}.\]
Elle sera not\'ee $\varphi_q$ lorsqu'il n'y aura pas d'ambig\"uit\'e sur le groupe $H$. On remarque par ailleurs que l'action de $s^{-1}$ et l'\'el\'evation
\`a la $q$-i\`eme puissance commutent, donc la notation n'est pas abusive. L'application induite par $\varphi_q$ sur $H^\ab(\bar k)$ est un
automorphisme que l'on note toujours $\varphi_q$ par abus.

Pour tout $b\in H(\bar k)$, on d\'efinit $n_{b}$ comme le plus petit entier strictement positif tel que $\varphi_q^{n_b}(b)$ soit conjugu\'e \`a $b$.
On d\'efinit alors l'application \emph{$q$-norme} $N_{q,H}:H(\bar k)\to H^\ab(\bar k)$ comme le produit
\[N_{q,H}(b)=\prod_{i=0}^{n_{b}-1}\overline{\varphi_q^i(b)}=\prod_{i=0}^{n_{b}-1}\varphi_q^i(\bar b),\]
o\`u ``$\overline{\phantom{\varphi(b)}}$'' repr\'esente la projection naturelle de $H(\bar k)$ sur $H^\ab(\bar k)$. De m\^eme, elle sera not\'ee $N_q$
lorsqu'il n'y aura pas d'ambig\"uit\'e sur le groupe $H$.
\end{defi}

Avec ces d\'efinitions, on peut \'enoncer la formule alg\'ebrique de la fa\c con suivante.

\begin{thm}\label{theoreme formule sur fq}
Soit $G$ un $k$-groupe alg\'ebrique lin\'eaire \`a composante connexe neutre $G^\circ$ r\'eductive et tel que $F=G/G^\circ$ est d'ordre $n$ premier \`a $q$. Soit
$V=G\backslash G'$ pour un plongement de $G$ dans $G'$ semi-simple simplement connexe. On suppose que le tore $G^\tor=(G^\torf)^\circ$ est d\'eploy\'e par
une extension $L/k$ de degr\'e $d$ premier \`a $q$ et l'on note $m=nd$. On suppose aussi qu'il existe un $k$-sous-groupe fini $H_0$ de $G^\torf$ d'ordre premier \`a
$q$ tel que la compos\'ee $H_0\to G^\torf\to F$ est surjective et l'on note $H$ le sous-groupe (fini aussi) de $G^\torf$ engendr\'e par $H_0$ et par
$\phi_m^{-1}(H_0\cap G^\tor)$, o\`u $\phi_m$ d\'esigne la multiplication par $m$ dans $G^\tor$.

Alors, en identifiant $\bral V$ avec $H^1(k,\hat G^\ab)$, le groupe de Brauer non ramifi\'e alg\'ebrique $\brnral V$ de $V$ est donn\'e par les \'el\'ements
$\alpha\in H^1(k,\hat G^\ab)$ tels que, pour $a\in Z^1(k,\hat G^\ab)$ un cocycle (quelconque) repr\'esentant $\alpha$, on a
\[a_{s}(N_{q,H}(b))=1\quad \forall\, b\in H(\bar k).\]
\end{thm}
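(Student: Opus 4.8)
The plan is to take the cohomological criterion of Theorem \ref{theoreme cohomologique} as the starting point and to make its orthogonality condition completely explicit, reproducing for the finite group $H$ the computation carried out for finite stabilizers in \cite[\S4.2]{GLABrnral}. Since Theorem \ref{theoreme cohomologique} already characterizes $\brnral V$ as the set of $\alpha\in H^1(k,\hat G^\ab)$ whose image $\alpha'$ in $H^1(k'((t)),\hat G^\ab)$ is orthogonal, for every finite extension $k'/k$, to $\im[H^1(k'((t)),H)\to H^1(k'((t)),H^\ab)]$, no further reduction of the ambient group $G$ is needed: everything is already localized at the finite group $H$ and its abelianization. Dualizing the morphism $H^\ab\to G^\ab$ into $\hat G^\ab\to\hat H^\ab$, I would first rewrite the pairing so that it only involves the image of $\alpha$ in $H^1(k,\hat H^\ab)$ paired against classes coming from $H$-torsors; the evaluation $a_s(N_{q,H}(b))$ appearing in the statement is then understood via the pushforward of $N_{q,H}(b)\in H^\ab(\bar k)$ to $G^\ab(\bar k)$ followed by the character $a_s\in\hat G^\ab(\bar k)$.

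The heart of the argument is the explicit computation of the local pairing over $K'=k'((t))$ with $k'=\f_{q^r}$. Since $|H|$ is prime to $p$, $H$-torsors only see the tame quotient of $\gal(\overline{K'}/K')$, which is topologically generated by a Frobenius $\sigma=s^r$ and a tame inertia generator $\tau$ subject to $\sigma\tau\sigma^{-1}=\tau^{q^r}$. A cocycle is thus a pair $(a,b)=(c(\sigma),c(\tau))\in H(\bar k)\times H(\bar k)$, and the cocycle relation forces ${}^{\sigma}b=a^{-1}b^{q^r}a$; applying ${}^{s^{-r}}$ shows that $b$ is conjugate to $\varphi_q^{r}(b)$, so such a torsor exists over $K'$ exactly when $n_b\mid r$. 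Projecting to $H^\ab$ and pairing with the unramified class $\alpha'$ (inflated from $H^1(k',\hat H^\ab)$), I would use that the unramified-by-unramified contribution vanishes and that the residue computes the pairing as the evaluation of the Frobenius value of $\alpha'$ against the inertia value $\bar b$. Because the residue lands in a Tate twist, the Frobenius action on roots of unity contributes the $q$-th powers built into $\varphi_q$, and the Frobenius value of the restriction of $\alpha$ to $k'$ is the product $\prod_{i=0}^{r-1}{}^{s^i}a_s$; combining these two facts turns the pairing into $a_s\!\left(\prod_{i=0}^{r-1}\varphi_q^i(\bar b)\right)$, which by $\varphi_q^{n_b}$-invariance of $\bar b$ in $H^\ab$ equals $a_s(N_{q,H}(b))^{r/n_b}$.

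It then remains to quantify over $k'$. Taking $r=n_b$ (the minimal admissible extension $k'=\f_{q^{n_b}}$) yields the condition $a_s(N_{q,H}(b))=1$, and since every other admissible pairing is a power of this one, this single condition for each $b\in H(\bar k)$ is equivalent to orthogonality for all finite $k'/k$. Feeding this back into Theorem \ref{theoreme cohomologique} gives the stated formula; independence of the choice of cocycle $a$ representing $\alpha$ is automatic, since the quantity computes a pairing that depends only on the class $\alpha$. The main obstacle I anticipate is the bookkeeping in the local duality computation: correctly identifying the residue of a ramified $H^\ab$-class, tracking the Tate twist so that the naive Galois action ${}^{s^{-i}}$ is upgraded to $\varphi_q^i$, and checking that as $k'$ grows no extension smaller than $\f_{q^{n_b}}$ produces an extra constraint. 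This is precisely the delicate point already settled for finite stabilizers in \cite[\S4.2]{GLABrnral}, and the task is to verify that the argument goes through verbatim with $H$ in place of the finite stabilizer.
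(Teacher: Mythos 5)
Your reduction is the same as the paper's: both start from Theorem \ref{theoreme cohomologique} and use the compatibility of the pairings under the dual map $\hat G^\ab\to\hat H^\ab$ to replace $\alpha$ by its image $\alpha_H\in H^1(k,\hat H^\ab)$, so that the whole question becomes a statement about the finite group $H$ alone. Where you diverge is in how the equivalence of the two resulting conditions on $H^1(k,\hat H^\ab)$ is established. You propose to redo the local computation over $k'((t))$ explicitly (tame quotient, cocycle relation ${}^{\sigma}b=a^{-1}b^{q^r}a$, existence of the torsor iff $n_b\mid r$, residue giving $a_s(N_{q,H}(b))^{r/n_b}$); this is in substance a re-proof of \cite[Thm. 4.5]{GLABrnral} given \cite[Thm. 4.1]{GLABrnral}, and your sketch is consistent with what is done there. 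The paper sidesteps that computation entirely by a softer observation: it introduces the auxiliary homogeneous space $W=H\backslash H'$ with $H'$ semi-simple simply connected and \emph{finite} stabilizer $H$, and notes that the cohomological condition and the algebraic condition are precisely the two descriptions of the same subgroup $\brnral W\subset H^1(k,\hat H^\ab)$ furnished by Theorems 4.1 and 4.5 of \cite{GLABrnral}; two characterizations of one object are automatically equivalent, so no residue bookkeeping is needed. Your route is more self-contained but duplicates work already published; the paper's route buys the equivalence for free at the cost of quoting the two earlier theorems as black boxes. Both are valid proofs of the statement.
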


On remarque que la formule a bien un sens. En effet, $a_s$ est un \'el\'ement de $\hat G^\ab$, donc on peut l'\'evaluer en $N_q(b)\in H^\ab(\bar k)$ en poussant
cet \'el\'ement dans $G^\ab$, ce qui donne un \'el\'ement dans $\mu_h$, o\`u $h$ est l'exposant de $H$.

\begin{proof}
Il s'agit de d\'emontrer que la propri\'et\'e cohomologique du th\'eor\`eme \ref{theoreme cohomologique} est \'equivalente \`a celle de l'\'enonc\'e
ci-dessus. Autrement dit, il suffit de montrer que, pour $\alpha=[a]\in H^1(k,\hat G^\ab)$, si l'on note $r'(\alpha)$ sa restriction \`a
$H^1(k'((t)),\hat G^\ab)$, alors on a
\begin{gather*}
r'(\alpha) \perp\im[H^1(k'((t)),H)\to H^1(k'((t)),H^\ab)]\quad\forall\, k'/k\,\text{finie}\\
\Updownarrow\\
a_{s}(N_{q,H}(b))=1\quad \forall\, b\in H(\bar k).
\end{gather*}
D\'emontrons alors cette \'equivalence. L'application $H^\ab\to G^\ab$ nous donne une application duale $\hat G^\ab\to \hat H^\ab$. L'accouplement
entre $\hat G^\ab$ et $H^\ab$ que l'on a utilis\'e pour \'enoncer le th\'eor\`eme \ref{theoreme cohomologique} est \'evidemment compatible avec celui (naturel) entre $\hat H^\ab$ et
$H^\ab$. Il en est par cons\'equent de m\^eme au niveau du cup produit des $H^1$. Autrement dit, on a le diagramme commutatif suivant :
\[\xymatrix{
H^1(k((t)),H^\ab)\times H^1(k((t)),\hat G^\ab)  \ar[r] \ar@<15mm>[d] \ar@{=}@<-15mm>[d] & \br k((t)) \ar@{=}[d] \\
H^1(k((t)),H^\ab)\times H^1(k((t)),\hat H^\ab)  \ar[r] & \br k((t)).
}\]
Si l'on note alors $a_H$ l'image de $a$ dans $Z^1(k,\hat H^\ab)$, $\alpha_H=[a_H]$ et $r'_H$
la restriction $H^1(k,\hat H^\ab)\to H^1(k'((t)),\hat H^\ab)$, il est \'evident que l'on a les deux \'equivalences suivantes :
\begin{gather*}
r'_H(\alpha_H) \perp\im[H^1(k'((t)),H)\to H^1(k'((t)),H^\ab)]\quad\forall\, k'/k\,\text{finie}\\
\Updownarrow\\
r'(\alpha)\perp\im[H^1(k'((t)),H)\to H^1(k'((t)),H^\ab)]\quad\forall\, k'/k\,\text{finie},
\end{gather*}
et
\begin{gather*}
(a_H)_{s}(N_{q,H}(b))=1\quad \forall\, b\in H(\bar k)\\
\Updownarrow\\
a_{s}(N_{q,H}(b))=1\quad \forall\, b\in H(\bar k).
\end{gather*}
On voit alors qu'il suffit de d\'emontrer
\begin{gather*}
r'_H(\alpha_H) \perp\im[H^1(k'((t)),H)\to H^1(k'((t)),H^\ab)]\quad\forall\, k'/k\,\text{finie}\\
\Updownarrow\\
(a_H)_{s}(N_{q,H}(b))=1\quad \forall\, b\in H(\bar k).
\end{gather*}
Or, cette derni\`ere \'equivalence d\'ecoule des th\'eor\`emes 4.1 et 4.5 dans \cite{GLABrnral}, lesquels calculent tous les deux le groupe $\brnral W$
pour $W=H\backslash H'$ avec $H'$ semi-simple simplement connexe, le premier avec la formule du dessus, le deuxi\`eme avec la formule en dessous.
\end{proof}

On conclut cette section en notant que les th\'eor\`emes \ref{theoreme cohomologique} et \ref{theoreme formule sur fq} nous
permettent de comparer explicitement le groupe $\brnral V$ avec le groupe de Brauer non ramifi\'e alg\'ebrique d'un espace homog\`ene \`a
stabilisateur fini. En effet, soient $G$, $G'$, $H_0$ et $H$ comme dans l'\'enonc\'e des th\'eor\`emes \ref{theoreme cohomologique} et
\ref{theoreme formule sur fq}. Consid\'erons une inclusion $H\hookrightarrow H'$ avec $H'$ semi-simple simplement connexe et posons
$W=H\backslash H'$. On sait d\'ej\`a que le groupe $\bral W$ s'identifie \`a $H^1(k,\hat H^\ab)$. En appliquant alors ces th\'eor\`emes au calcul de
$\brnral W$ en prenant le groupe $H$ tout entier comme son sous-groupe fini (cas o\`u $H^\torf=H$ et $H^\tor=1$), on voit que l'on a de fa\c con 
\'evidente le corollaire suivant.

\begin{cor}\label{corollaire comparaison de Brnral entre G et H corps finis}
Soit $\alpha_H$ l'image de $\alpha\in\bral V=H^1(k,\hat G^\ab)$ dans $H^1(k,\hat H^\ab)=\bral W$. Alors $\alpha\in\brnral V$ si et seulement si 
$\alpha_H\in\brnral W$. En particulier, on a $\ker[H^1(k,\hat G^\ab)\to H^1(k,\hat H^\ab)]\subset\brnral V$.\qed
\end{cor}

\section{Formule sur un corps de caract\'eristique 0}\label{section formules caracteristique 0}
Dans toute cette section, $k$ d\'esigne un corps de caract\'eristique $0$. De la m\^eme fa\c con que dans \cite{GLABrnral}, on retrouve ici une formule
pour le groupe $\brnral V$ o\`u $V=G\backslash G'$ en nous ramenant au cas des corps finis et en utilisant la formule d\'evelopp\'ee dans la section
pr\'ec\'edente. On rappelle que cette m\'ethode a d\'ej\`a \'et\'e utilis\'ee par Colliot-Th\'el\`ene et Kunyavski\u\i\, (cf. \cite{ColliotKunyavskii})
dans le cas des espaces principaux homog\`enes, i.e. lorsque $G$ est trivial, et par Borovoi, Demarche et Harari (cf. \cite{BDH}) dans le cas o\`u
$G$ est connexe ou commutatif (ou encore plus g\'en\'eralement de type ``ssumult'') et avec des groupes $G'$ plus g\'en\'eraux.

On commence en rappelant quelques d\'efinitions analogues \`a celles donn\'ees dans la section \ref{section formules corps fini}.

\begin{defi}
Soit $H$ un $k$-groupe fini d'exposant $h$. Soit $\zeta_h\in\bar k$ une racine primitive $h$-i\`eme de l'unit\'e.
On d\'efinit le morphisme $q:\Gamma_k\to(\z/h\z)^*$ par la formule suivante :
\[\forall\,\sigma\in\Gamma_k,\quad\asd{\sigma}{}{}{h}{\zeta}=\zeta_h^{q(\sigma)}.\]
C'est le caract\`ere cyclotomique modulo $h$, cf. \cite[Def. 7.3.6]{NSW}.

On d\'efinit ensuite, pour tout $\sigma\in\Gamma_k$, l'application $\varphi_{\sigma,H}:H(\bar k)\to H(\bar k)$ par la formule
\[\varphi_{\sigma,H}(b)=\asd{\sigma^{-1}}{}{q(\sigma)}{}{b},\]
laquelle sera not\'ee $\varphi_\sigma$ lorsqu'il n'y aura pas d'ambig\"uit\'e sur le groupe $H$. On note abusivement $\varphi_\sigma$ aussi
l'application induite sur $H^\ab(\bar k)$ qui en est un automorphisme. On remarque par ailleurs que l'action de $\sigma^{-1}$ et l'\'el\'evation \`a la
$q(\sigma)$-i\`eme puissance commutent, donc la notation n'est pas abusive.

Enfin, pour $\sigma\in\Gamma_k$ et $b\in H(\bar k)$, on d\'efinit $n_{\sigma,b}$ comme le plus petit entier strictement positif tel que
$\varphi^{n_{\sigma,b}}_\sigma(b)$ soit conjugu\'e \`a $b$. On d\'efinit alors l'application \emph{$\sigma$-norme}
$N_{\sigma,H}:H(\bar k)\to H^\ab(\bar k)$ par la formule suivante :
\[N_{\sigma,H}(b):=\prod_{i=0}^{n_{\sigma,b}-1}\overline{\varphi_\sigma^i(b)}=\prod_{i=0}^{n_{\sigma,b}-1}\varphi_\sigma^i(\bar b).\]
De m\^eme, $N_{\sigma,H}$ sera not\'ee $N_\sigma$ lorsqu'il n'y aura pas d'ambig\"uit\'e sur le groupe $H$.
\end{defi}

Avec ces d\'efinitions, on peut \'enoncer la formule pour $\brnral V$.

\begin{thm}\label{theoreme formule en car 0}
Soit $G$ un $k$-groupe alg\'ebrique lin\'eaire et soit $V=G\backslash G'$ pour un plongement de $G$ dans $G'$ semi-simple simplement connexe.
Soit $L/k$ une extension d\'eployant le tore $G^\tor=(G^\torf)^\circ$. On note $d$ le degr\'e de cette extension, $n$ l'ordre du groupe fini
$F=G/G^\circ$ et $m=nd$. On suppose aussi qu'il existe un $k$-sous-groupe fini $H_0$ de $G^\torf$ tel que
la compos\'ee $H_0\to G^\torf\to F$ est surjective et l'on note $H$ le sous-groupe (fini aussi) de $G^\torf$ engendr\'e par $H_0$ et par
$\phi_m^{-1}(H_0\cap G^\tor)$, o\`u $\phi_m$ d\'esigne la multiplication par $m$ dans $G^\tor$.

Alors, en identifiant $\bral V$ avec $H^1(k,\hat G^\ab)$, le groupe de Brauer non ramifi\'e alg\'ebrique $\brnral V$ de $V$ est donn\'e par les \'el\'ements
$\alpha\in H^1(k,\hat G^\ab)$ tels que, pour $a\in Z^1(k,\hat G^\ab)$ un cocycle (quelconque) repr\'esentant $\alpha$, on a
\begin{equation}\label{equation formule brnral en car 0}
a_\sigma(N_{\sigma,H}(b))=1\quad \forall\, b\in H(\bar k),\forall\, \sigma\in\Gamma_k.
\end{equation}
\end{thm}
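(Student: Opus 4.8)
Le plan est de reproduire, dans ce cadre plus g\'en\'eral, la m\'ethode de \cite{GLABrnral} : se ramener au cas des corps finis, d\'ej\`a trait\'e par le Th\'eor\`eme \ref{theoreme formule sur fq}, au moyen d'un argument de sp\'ecialisation. On remarque d'abord que le module $\hat G^\ab$, le sous-quotient fini $H$ et la norme $N_{\sigma,H}$ ne d\'ependent que de $G^\torf$, donc ignorent le radical unipotent $R_\mathrm{u}(G)$ ; seul le calcul g\'eom\'etrique de $\brnral V$ est sensible \`a la partie unipotente. Je commencerais donc par invoquer le r\'esultat de l'appendice (l'am\'elioration d'une remarque de Colliot-Th\'el\`ene, cf. \cite{ColliotBrnr}) afin de me ramener au cas o\`u $G^\circ$ est r\'eductive, c'est-\`a-dire \`a la situation de la section \ref{section formules corps fini}. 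Je fixerais ensuite une extension galoisienne finie $K/k$ \`a travers laquelle se factorisent simultan\'ement l'action de $\Gamma_k$ sur $\hat G^\ab$, sur $H(\bar k)$ et sur $H^\ab(\bar k)$, l'extension $L$ d\'eployant $G^\tor$, et les racines $h$-i\`emes de l'unit\'e, o\`u $h$ est l'exposant de $H$. En posant $\Gamma=\gal(K/k)$, le cocycle $a$ provient de $Z^1(\Gamma,\hat G^\ab)$ et, pour $\sigma\in\Gamma_k$ d'image $\gamma\in\Gamma$, l'automorphisme $\varphi_\sigma$ et la norme $N_{\sigma,H}$ ne d\'ependent que de $\gamma$ et de $q(\gamma)\bmod h$ ; la condition \eqref{equation formule brnral en car 0} se lit donc comme une condition index\'ee par les couples $(\gamma,b)\in\Gamma\times H(\bar k)$.

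La deuxi\`eme \'etape est l'\'etalement suivi d'une r\'eduction. J'\'etendrais toutes les donn\'ees --- $G$, $G'$, $H_0$, $H$, $V$, une compactification lisse $X$ \`a bord \`a croisements normaux relatif, le cocycle $a$, ainsi que $K$ vu comme rev\^etement fini \'etale galoisien de groupe $\Gamma$ --- en des mod\`eles convenables sur un sous-anneau de type fini $A\subset k$. Pour chaque $\gamma\in\Gamma$, la th\'eorie des \'el\'ements de Frobenius dans les rev\^etements finis \'etales (un argument de densit\'e \`a la \v{C}ebotarev sur $\spec A$) fournit une infinit\'e de points ferm\'es $\mathfrak m$ de corps r\'esiduel fini $\f_{q_0}$, de caract\'eristique \'etrang\`ere \`a $nd$ et \`a $h$, dont la classe de Frobenius est celle de $\gamma$. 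Le caract\`ere cyclotomique se recolle alors automatiquement, car le Frobenius agit sur $\mu_h$ par $\zeta\mapsto\zeta^{q_0}$, d'o\`u $q_0\equiv q(\gamma)\pmod{h}$. Les r\'eductions $\bar G,\bar G',\bar H$ v\'erifient les hypoth\`eses du Th\'eor\`eme \ref{theoreme formule sur fq}, la $q_0$-norme $N_{q_0,\bar H}$ co\"{\i}ncide avec la r\'eduction de $N_{\sigma,H}$, et $\bar a_s=a_\sigma$. La condition de ce th\'eor\`eme, \`a savoir $\bar a_s(N_{q_0,\bar H}(\bar b))=1$ pour tout $\bar b$, est donc exactement l'\'egalit\'e $a_\sigma(N_{\sigma,H}(b))=1$ pour tout $b$, et elle \'equivaut \`a $\bar\alpha\in\brnral\bar V$.

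Il resterait alors \`a comparer les parties non ramifi\'ees avant et apr\`es r\'eduction, c'est-\`a-dire \`a \'etablir que $\alpha\in\brnral V$ si et seulement si $\bar\alpha\in\brnral\bar V$ pour les r\'eductions retenues ; combin\'ee \`a l'\'etape pr\'ec\'edente appliqu\'ee \`a tous les $\gamma\in\Gamma$, cette \'equivalence fournit imm\'ediatement la formule \eqref{equation formule brnral en car 0}. En \'ecrivant $\brnral V=\br X$ et en utilisant un mod\`ele lisse et propre $X_A$ de la compactification, le sens direct r\'esulte de la restriction aux fibres d'une classe de $\br X_A$. Le sens r\'eciproque est le point d\'elicat et constitue \`a mon sens la principale difficult\'e : il s'agit de montrer qu'un r\'esidu non nul de $\alpha$ le long d'une composante du bord survit dans au moins une r\'eduction. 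C'est ici qu'interviennent de fa\c con essentielle la compatibilit\'e des applications r\'esidus (puret\'e de Grothendieck) \`a la sp\'ecialisation et le fait, assur\'e par l'\'etape pr\'ec\'edente, que tout $\gamma\in\Gamma$ --- donc toute classe d'inertie g\'eom\'etrique pertinente --- soit r\'ealis\'e comme Frobenius d'une r\'eduction. Tout le soin de la d\'emonstration porte donc sur le choix du mod\`ele $A$ et de l'\'etalement de $X$, de sorte que le bord reste \`a croisements normaux relatif et qu'aucun r\'esidu non trivial ne s'\'evanouisse sous r\'eduction.
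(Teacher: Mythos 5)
Votre strat\'egie g\'en\'erale --- r\'eduction au cas o\`u $G^\circ$ est r\'eductive via l'appendice, \'etalement sur un anneau de type fini, th\'eor\`eme de \v{C}ebotarev pour r\'ealiser $\sigma$ comme un Frobenius, puis application du th\'eor\`eme \ref{theoreme formule sur fq} sur le corps r\'esiduel --- est bien celle du texte. Mais deux maillons essentiels manquent. D'abord, vous ne justifiez pas pourquoi il suffit de traiter les $\sigma$ un par un : l'appartenance \`a $\brnral V=\im[H^1(k,\pic X_{\bar k})\to H^1(k,\hat G^\ab)]$ est une condition globale sur $\Gamma_k$, et c'est le lemme \ref{lemme brnral <<egal au sha1>>} (le lemme 4.16 de \cite{GLABrnral}) qui la ram\`ene \`a la m\^eme question restreinte \`a chaque sous-groupe pro-cyclique $\gamma\subset\Gamma_k$ ; sans cet \'enonc\'e, la collection d'\'equivalences index\'ees par les $\gamma\in\gal(K/k)$ ne se recolle pas en la conclusion du th\'eor\`eme.

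Ensuite, et c'est le trou le plus s\'erieux : pour la comparaison $\alpha\in\brnral V\Leftrightarrow\bar\alpha\in\brnral\bar V$, vous proposez de suivre les r\'esidus le long du bord et reconnaissez vous-m\^eme que le sens r\'eciproque (un r\'esidu non nul doit survivre \`a la r\'eduction) est ``le point d\'elicat'', sans en donner la preuve. La d\'emonstration du texte contourne enti\`erement cette difficult\'e : $X$ \'etant unirationnelle en caract\'eristique $0$, on a $H^1(X,\mathcal{O}_X)=H^2(X,\mathcal{O}_X)=0$, et \cite[Cor. 2.7]{GrothendieckPicard} fournit des isomorphismes de sp\'ecialisation $\pic X_{\bar k}\xrightarrow{\sim}\pic Y_{\bar\f}$ de $\gamma$-modules, compatibles avec les fl\`eches vers $\hat G^\ab$. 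L'\'equivalence voulue (proposition \ref{proposition intermediaire theoreme formule en car 0}) devient alors une cons\'equence formelle du diagramme \eqref{equation diagramme specialisation des picard 2}, dans les deux sens \`a la fois, sans aucune analyse de r\'esidus ni hypoth\`ese de croisements normaux sur le bord. Tel quel, votre argument pour le sens r\'eciproque n'est qu'une d\'eclaration d'intention ; pour le compl\'eter, il faudrait soit le remplacer par la sp\'ecialisation des groupes de Picard g\'eom\'etriques, soit refaire \`a la main la combinatoire des composantes du bord, de leurs corps de constantes et de leurs classes d'inertie, ce qui revient essentiellement \`a red\'emontrer les deux \'enonc\'es que vous n'invoquez pas.
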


On commence en remarquant que la proposition \ref{proposition remarque Colliot} dans l'appendice, issue d'une remarque faite par Colliot-Th\'el\`ene, nous
dit que l'on peut se restreindre au cas o\`u la composante connexe neutre $G^\circ$ de $G$ est r\'eductive. En effet, cette proposition nous dit que, si l'on plonge le
quotient $G_0=G/R_\mathrm{u}(G)$ dans un autre groupe $G'_0$ semi-simple simplement connexe et que l'on note $W=G_0\backslash G'_0$, alors le
groupe $\brnr V$ est isomorphe au groupe $\brnr W$. D'autre part, il est \'evident que l'on a $\bral V\cong\bral W$ car ils sont tous les deux isomorphes \`a
$H^1(k,\hat G^\ab)$ puisque $\hat G^\ab=\hat G=\hat G_0$. On en d\'eduit l'\'egalit\'e $\brnral V\cong\brnral W$ et l'on voit alors qu'il suffit de calculer
le groupe $\brnral W$.\\

On suppose alors d\'esormais que la composante connexe neutre $G^\circ$ de $G$ est r\'eductive. Soit $X$ une $k$-compactification lisse de $V$. On rappelle que de
la m\^eme fa\c con que l'on identifie $\bral V$ avec $H^1(k,\hat G^\ab)$, on peut identifier $\brnral V=\bral X$ avec $H^1(k,\pic X_{\bar k})$ et que l'on a
le r\'esultat suivant (cf. \cite[Lem. 4.16]{GLABrnral}) :

\begin{lem}\label{lemme brnral <<egal au sha1>>}
Soit $G$ un $k$-groupe alg\'ebrique lin\'eaire plong\'e dans $G'$ semi-simple simplement connexe et soit $V=G\backslash G'$. Soit $X$ une $k$-compactification lisse de
$V$. Soit enfin $\alpha\in H^1(k,\hat G^\ab)$. Supposons que pour tout sous-groupe pro-cyclique $\gamma$ de $\Gamma_k$, l'image de $\alpha$ dans
$H^1(\gamma,\hat G^\ab)$ provient de $H^1(\gamma,\pic X_{\bar k})$. Alors $\alpha$ provient de $H^1(k,\pic X_{\bar k})$.
\end{lem}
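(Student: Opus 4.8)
The plan is to recast the statement as the vanishing of a single cohomology class lying in $H^2$ of a permutation module, and then to exploit the fact that such classes are detected on procyclic subgroups.

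First I would recall the standard exact sequence attached to the open immersion $V\hookrightarrow X$. Writing $Z=X_{\bar k}\setminus V_{\bar k}$ for the boundary and $D_1,\dots,D_r$ for its irreducible geometric components, smoothness of $X$ makes the restriction $\pic X_{\bar k}\twoheadrightarrow\pic V_{\bar k}$ surjective, and its kernel is generated by the classes $[D_i]$ modulo principal divisors supported on $Z$; since $\bar k[V]^*/\bar k^*=1$ (already established in the text) these relations are trivial. This yields a short exact sequence of $\Gamma_k$-modules
\[
0\to M\to\pic X_{\bar k}\to\pic V_{\bar k}\to 0,
\]
where $M=\bigoplus_{i=1}^r\z\cdot[D_i]$ is the permutation module on the components of $Z$ permuted by $\Gamma_k$, and where $\pic V_{\bar k}=\hat G^\ab$.

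Next I would pass to the long exact sequence in Galois cohomology. The image of $H^1(k,\pic X_{\bar k})\to H^1(k,\hat G^\ab)$ is exactly the kernel of the connecting homomorphism $\delta\colon H^1(k,\hat G^\ab)\to H^2(k,M)$, so that \emph{``$\alpha$ provient de $H^1(k,\pic X_{\bar k})$''} is equivalent to $\delta(\alpha)=0$. The same holds verbatim over each procyclic $\gamma\subset\Gamma_k$, and by functoriality of the connecting map the restriction commutes with $\delta$, giving $\mathrm{res}_\gamma(\delta(\alpha))=\delta_\gamma(\mathrm{res}_\gamma(\alpha))$. Thus the hypothesis reads $\mathrm{res}_\gamma(\delta(\alpha))=0$ for every procyclic $\gamma$, while the desired conclusion is $\delta(\alpha)=0$.

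It then remains to prove the purely cohomological assertion that, for the permutation module $M$, the restriction map $H^2(k,M)\to\prod_\gamma H^2(\gamma,M)$ over all closed procyclic subgroups is injective. By Shapiro's lemma I would reduce to $M=\z[\Gamma_k/\Gamma']$ for an open subgroup $\Gamma'$, where $H^2(k,\z[\Gamma_k/\Gamma'])\cong H^2(\Gamma',\z)\cong\hom(\Gamma',\q/\z)$ (using $H^i(\Gamma',\q)=0$ for $i\geq 1$ and the sequence $0\to\z\to\q\to\q/\z\to 0$). A continuous character of $\Gamma'$ is trivial if and only if it vanishes on every element, i.e.\ on every procyclic subgroup of $\Gamma'$. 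The main obstacle is exactly the final bookkeeping: unwinding the Mackey (double-coset) description of $\mathrm{res}_\gamma$ through the Shapiro isomorphism, one must check that restricting along procyclic subgroups $\gamma$ of $\Gamma_k$ genuinely recovers all the procyclic subgroups of $\Gamma'$ that appear, so that the vanishing hypothesis forces the character, hence $\delta(\alpha)$, to be zero.
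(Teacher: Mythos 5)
Your argument is correct, and it is essentially the standard proof of this statement; note that the present paper does not reprove the lemma at all but simply quotes it from \cite[Lem.~4.16]{GLABrnral}, where the argument is precisely the one you give (exact sequence $0\to\Div_{Z}(X_{\bar k})\to\pic X_{\bar k}\to\pic V_{\bar k}\to 0$, valid because $\bar k[V]^*/\bar k^*=1$, followed by the vanishing of the ``$\Sha^2_\omega$'' of a permutation module). The one step you flag as the main obstacle does close without difficulty: for $\chi\in\hom(\Gamma',\q/\z)$ and $\tau\in\Gamma'$, take $\gamma=\overline{\langle\tau\rangle}\subset\Gamma'$; in the double-coset decomposition of $\z[\Gamma_k/\Gamma']$ restricted to $\gamma$, the trivial double coset contributes the summand $\z[\gamma/(\gamma\cap\Gamma')]=\z$, on which the Mackey formula identifies the corresponding component of $\mathrm{res}_\gamma$ with the ordinary restriction $\chi\mapsto\chi|_\gamma$, so the hypothesis forces $\chi(\tau)=0$ for every $\tau$ and hence $\chi=0$.
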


Compte tenu de ce lemme, on voit facilement que pour d\'emontrer le th\'eor\`eme \ref{theoreme formule en car 0} il suffit de d\'emontrer, pour
$\alpha\in H^1(k,\hat G^\ab)$, $\sigma\in\Gamma_k$ et $\gamma$ le sous-groupe ferm\'e pro-cyclique engendr\'e par $\sigma$, la proposition suivante :

\begin{pro}\label{proposition intermediaire theoreme formule en car 0}
Avec les notations ci-dessus, la restriction $\alpha'\in H^1(\gamma,\hat G^\ab)$ de $\alpha$ provient de $H^1(\gamma,\pic X_{\bar k})$ si et seulement si,
pour $a\in Z^1(k,\hat G^\ab)$ un cocycle repr\'esentant $\alpha$, on a
\[a_\sigma(N_{\sigma,H}(b))=1\quad \forall\, b\in H.\]
\end{pro}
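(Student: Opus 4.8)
Le plan est de ramener l'\'enonc\'e sur le corps $k$ de caract\'eristique $0$ au cas d\'ej\`a trait\'e des corps finis, via le passage au groupe pro-cyclique $\gamma=\langle\sigma\rangle$. L'observation cl\'e est que la cohomologie galoisienne d'un groupe pro-cyclique ferm\'e $\gamma\cong\hat{\z}$ (ou un quotient fini de celui-ci) se comporte, pour les modules de torsion premi\`ere \`a $p$ concern\'es, exactement comme la cohomologie galoisienne d'un corps fini, le g\'en\'erateur topologique $\sigma$ jouant le r\^ole du Frobenius $s$. Plus pr\'ecis\'ement, le caract\`ere cyclotomique $q(\sigma)$ introduit dans la d\'efinition de $\varphi_{\sigma,H}$ est l'analogue exact du $q$ apparaissant dans $\varphi_{q,H}$ pour un corps fini : l'action de $\sigma$ sur les racines de l'unit\'e d'ordre divisant l'exposant $h$ de $H$ se fait par \'el\'evation \`a la puissance $q(\sigma)$. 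Ainsi, la $\sigma$-norme $N_{\sigma,H}$ et la $q$-norme $N_{q,H}$ co\"\i ncident formellement lorsqu'on remplace le couple (corps fini $k$, Frobenius $s$) par le couple (corps $k$ de caract\'eristique $0$, \'el\'ement $\sigma\in\Gamma_k$).

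La strat\'egie d\'etaill\'ee serait la suivante. D'abord, je traduirais la condition ``$\alpha'$ provient de $H^1(\gamma,\pic X_{\bar k})$'' en termes de la suite exacte reliant $\pic X_{\bar k}$, $\pic V_{\bar k}=\hat G^\ab$ et le groupe des diviseurs \`a support dans le bord $X\setminus V$, tout comme cela est fait dans \cite{GLABrnral} pour obtenir la formule cohomologique sur les corps finis. Ceci ram\`ene l'appartenance \`a l'image \`a une condition d'orthogonalit\'e pour un accouplement local, analogue \`a celui du th\'eor\`eme \ref{theoreme cohomologique}, mais cette fois sur le corps $k'((t))$ o\`u $k'$ correspond au corps fixe appropri\'e d\'etermin\'e par $\gamma$. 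Ensuite, j'invoquerais la proposition \ref{proposition Gille_Ch_R} (valable pour tout corps $k'\supset k$, et en particulier pour les corps $k'((t))$) afin de remplacer l'image de $H^1(k'((t)),G)$ par l'image de $H^1(k'((t)),H)$ dans les $H^1$ ab\'elianis\'es, exactement comme dans la preuve du th\'eor\`eme \ref{theoreme cohomologique}. Enfin, je traduirais cette condition d'orthogonalit\'e en la condition explicite $a_\sigma(N_{\sigma,H}(b))=1$ en appliquant l'\'equivalence \'etablie dans \cite{GLABrnral} (leurs th\'eor\`emes 4.1 et 4.5), mais d\'esormais relative au sous-groupe pro-cyclique $\gamma$ plut\^ot qu'au Frobenius d'un corps fini.

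L'obstacle principal sera de justifier rigoureusement l'identification entre la cohomologie de $\gamma$ et celle d'un corps fini, y compris le r\^ole pr\'ecis du caract\`ere cyclotomique modulo $h$. Il faut notamment v\'erifier que le cocycle $a$, restreint \`a $\gamma$, est enti\`erement d\'etermin\'e par sa valeur $a_\sigma$ sur le g\'en\'erateur (ce qui est le cas pour un groupe pro-cyclique, puisqu'un $1$-cocycle y est d\'etermin\'e par son image sur un g\'en\'erateur topologique modulo la condition de compatibilit\'e), et que l'accouplement local sur $k'((t))$ se comporte correctement sous cette identification. Une fois \'etablie cette analogie pr\'ecise, le reste de la preuve ne fait que recopier, mutatis mutandis, les arguments des sections pr\'ec\'edentes, les formules \'etant formellement identiques \`a celles du cas fini apr\`es la substitution $(s,q)\rightsquigarrow(\sigma,q(\sigma))$.
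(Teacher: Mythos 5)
You have correctly identified the target (a reduction to Theorem \ref{theoreme formule sur fq}) and correctly located the main difficulty, but your sketch does not contain the idea that actually overcomes it, and the purely formal identification you propose would not go through. Theorem \ref{theoreme formule sur fq}, via Proposition \ref{proposition cohomologique generale}, is a statement about an honest variety over an honest finite field: its proof rests on local duality over $\f_q((t))$ and on the arithmetic of finite fields, not merely on the fact that $\Gamma_{\f_q}\cong\hat\z$ is pro-cyclic. For the fixed field $k'=\bar k^{\gamma}$, which is a field of characteristic $0$, no analogue of Proposition \ref{proposition cohomologique generale} is available, so your step ``translate membership in the image of $H^1(\gamma,\pic X_{\bar k})$ into an orthogonality condition over $k'((t))$'' has nothing to stand on. Note also that $\gamma$ need not be isomorphic to $\hat\z$ at all: $\sigma$ may be a torsion element (for instance a complex conjugation), in which case $\gamma$ is finite and is not the absolute Galois group of any finite field, so the substitution $(s,q)\rightsquigarrow(\sigma,q(\sigma))$ cannot be justified by a mere isomorphism of profinite groups.

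The paper supplies the missing idea by a genuine specialization argument: after reducing to $k$ finitely generated over $\q$, it spreads out $G$, $H$, $G'$, $V$ and the compactification $X$ over a regular ring $A$ of finite type over $\z$, uses $H^1(X,\cal{O}_X)=H^2(X,\cal{O}_X)=0$ to obtain specialization isomorphisms $\pic X_{\bar k}\xrightarrow{\sim}\pic \cal{X}_x$ compatible with the Galois actions, and then invokes the geometric Chebotarev theorem to produce a closed point whose residue field $\f$ realizes the finite quotient $\tilde\gamma$ of $\gamma$ as the Galois group of the relevant residue extension, with Frobenius acting as $\sigma$ does. Only then can Theorem \ref{theoreme formule sur fq} be applied, namely to the special fibre $Y$ over $\f$; the identity $N_{q_0,\cal{H}_{x_1}}=N_{\sigma,H}$, i.e. $q(\sigma)\equiv q_0\bmod h$, is the easy final verification and is essentially the only part your sketch actually carries out. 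Moreover, when $\sigma$ has finite order the comparison map $H^1(\gamma,\hat G^\ab)\to H^1(\f,\hat{\cal G}^\ab_{x_1})$ is only injective, not bijective, so one must argue through the commutative diagram of Picard groups rather than through an identification of the two cohomologies. Without the spreading-out, good-reduction and Chebotarev step, your argument has a genuine gap.
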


En effet, on peut voir que la restriction $\alpha'$ est clairement repr\'esent\'ee par le cocycle $a|_{\gamma}\in Z^1(\gamma,M)$ et donc $a_\sigma$
n'est rien d'autre que $(a|_\gamma)_\sigma$. La d\'emarche consistera alors \`a trouver un corps fini $\f$ tel que l'on puisse regarder $H(\bar k)$,
$\hat G^\ab$ et $\hat H^\ab$ comme des $\Gamma_\f$-groupes dont l'action du Frobenius co\"\i ncide avec celle de $\sigma$. On aura alors des
morphismes canoniques
$H^1(\gamma,\hat G^\ab)\to H^1(\f,\hat G^\ab)$ et $H^1(\gamma,\pic X_{\bar k})\xrightarrow{\sim} H^1(\f,\pic X_{\bar\f})$ et des \'egalit\'es
$\varphi_{q_0,H}=\varphi_{\sigma,H}$ et $N_{q_0,H}=N_{\sigma,H}$,
o\`u $q_0$ est le cardinal de $\f$. Le r\'esultat d\'ecoulera alors du th\'eor\`eme \ref{theoreme formule sur fq}.\\

Pour ce faire, on suit la d\'emarche de Colliot-Th\'el\`ene et Kunyavski\u\i , laquelle est explicit\'ee en d\'etails dans \cite[\S4.3.1]{GLABrnral} et que
l'on r\'esume ici.

\begin{proof}[D\'emonstration de la proposition \ref{proposition intermediaire theoreme formule en car 0}]
La premi\`ere \'etape consiste \`a se r\'eduire au cas o\`u $k$ est de type fini sur $\q$. On utilise notamment le fait que toutes les vari\'et\'es et
morphismes consid\'er\'es sont de type fini et que le groupe de Picard $\pic X_{\bar k}$ est sans torsion, raison pour laquelle le groupe
$H^1(k,\pic X_{\bar k})$ s'annule par restriction \`a une extension finie $\tilde k/k$. On suppose aussi pour la suite que $\tilde k$ contient les racines
$h$-i\`emes de l'unit\'e.

Ensuite, puisque $k$ est de type fini sur $\q$, on sait que l'on peut trouver :
\begin{enumerate}
\item Un anneau int\`egre et r\'egulier $A$ de type fini sur $\z$ et de corps de fractions $k$, tel que la fermeture int\'egrale $\tilde A$ dans $\tilde k$
soit finie, \'etale et galoisienne sur $A$ de groupe de Galois $\tilde\Gamma=\gal(\tilde k/k)$.
\item Des $A$-sch\'emas $\mathcal{G}$, $\cal{H}_0$, $\cal{H}$, $\cal{G}'$, $\mathcal{V}$ et $\mathcal{X}$ de type fini, de fibres g\'en\'eriques
respectives $G$, $H_0$, $H$, $G'$, $V$ et $X$ et tels que :
\begin{itemize}
\item $\cal{G}'$ est un $A$-sch\'ema en groupes semi-simple simplement connexe (au sens de \cite[XIX.2.7, XXII.4.3.3]{SGA3III}), $\mathcal{G}$ est
un sous-$A$-sch\'ema en groupes lisse de $\cal{G}'$ ;
\item $\cal{H}_0$ est un sous-$A$-sch\'ema en groupes fini et lisse de $\cal{G}^\torf$ et $\mathcal{H}$ est le sous-groupe de $\cal{G}^\torf$
engendr\'e par $\cal{H}_0$ et $\phi_m^{-1}(\cal{H}_0\cap \cal{G}^\tor)$, o\`u $\phi_m$ d\'esigne la multiplication par $m$ dans $\cal{G}^\tor$.
\item $\mathcal{V}=\mathcal{G}\backslash \cal{G}'$ est lisse sur $A$, $\mathcal{X}$ est propre et lisse sur $A$ et on a une $A$-immersion ouverte
$\mathcal{V}\hookrightarrow\mathcal{X}$ \'etendant $V\hookrightarrow X$ ;
\item pour tout point $x$ de $\spec A$ de corps r\'esiduel $\kappa_x$, la fibre $\mathcal{X}_x\subset\mathcal{X}$ est une compactification lisse de
$\mathcal{V}_x\subset\mathcal{V}$ sur $\kappa_x$.
\end{itemize}
\end{enumerate}
En notant que $X$ est unirationnelle et que $k$ est un corps de caract\'eristique $0$, on voit que $H^1(X,\cal{O}_X)=H^2(X,\cal{O}_X)=0$ et, quitte \`a
restreindre $\spec A$, on peut supposer que $H^1(\mathcal{X}_x,\cal{O}_{\mathcal{X}_x})=H^2(\mathcal{X}_x,\cal{O}_{\mathcal{X}_x})=0$ pour tout
point $x$ de $\spec A$. Alors, d'apr\`es \cite[Cor. 2.7]{GrothendieckPicard}, on trouve des isomorphismes de sp\'ecialisation
\[\pic X\xrightarrow{\sim}\pic \mathcal{X}_x\]
pour tout point $x\in\spec A$ et, quitte \`a r\'eduire encore $\spec A$, des isomorphismes 
\[\pic X_{\tilde k}\xrightarrow{\sim}\pic \tilde{\mathcal{X}}_{\tilde x}\]
compatibles avec les actions galoisiennes correspondantes pour tout point $\tilde x\in\spec\tilde A$, o\`u
$\tilde{\mathcal{X}}=\mathcal{X}\times_A\tilde A$.

Si l'on note alors $\tilde\gamma$ l'image de $\gamma\subset\Gamma_k$ dans $\tilde\Gamma$ et que l'on consid\`ere le sous-anneau
$A_1:=\tilde A^{\tilde\gamma}$ des \'el\'ements $\tilde\gamma$-invariants de $\tilde A$, le morphisme $\rho:\spec\tilde A\to\spec A_1$ est un
rev\^etement \'etale cyclique dont le groupe de Galois est $\tilde\gamma$ et avec $\tilde A$ et $A_1$ des anneaux int\`egres, r\'eguliers et de type fini
sur $\z$. La version g\'eom\'etrique du th\'eor\`eme de \v Cebotarev (cf. \cite[Thm. 7]{SerreZeta}) nous dit alors qu'il existe une infinit\'e de points
ferm\'es $x_1$ de $\spec A_1$ tels que la fibre en $x_1$ de $\rho$ est un point ferm\'e $\tilde x$ de $\spec \tilde A$. Ceci veut dire que si l'on note
$\f$ le corps r\'esiduel en $x_1$ et $\mathbb{E}$ le corps r\'esiduel en $\tilde x$, on a une extension de corps r\'esiduels $\mathbb{E}/\f$ de groupe
de Galois $\tilde\gamma$ et par suite, en notant $\mathcal{X}_1=\mathcal{X}\times_A A_1$ et $Y$ la $\f$-vari\'et\'e $(\mathcal{X}_1)_{x_1}$, un
isomorphisme
\[H^1(\tilde\gamma,\pic X_{\tilde k})\xrightarrow{\sim} H^1(\tilde\gamma,\pic Y_{\mathbb{E}}).\]
De plus, d'apr\`es le lemme 6.3(iii) de \cite{Sansuc81}, on a que $\pic Y_{\mathbb{E}}\cong\pic Y_{\bar\f}$ car $\Gamma_{\bb E}$ agit trivialement sur
$\pic Y_{\bar\f}$.\\

De tout cela on obtient l'isomorphisme
\[H^1(\gamma,\pic X_{\bar k})\xrightarrow{\sim} H^1(\gamma,\pic Y_{\bar\f}),\]
et en particulier, le diagramme commutatif suivant :
\begin{equation}\label{equation diagramme specialisation des picard}
\xymatrix{
H^1(\gamma,\pic X_{\bar k}) \ar@{^{(}->}[r] \ar[d]^{\sim} & H^1(\gamma,\hat G^\ab) \ar@{=}[d] \\
H^1(\gamma,\pic Y_{\bar\f}) \ar@{^{(}->}[r] & H^1(\gamma,\hat{\cal G}_{x_1}^\ab) \\
}
\end{equation}
o\`u $\hat G^\ab=\hat{\cal G}_{x_1}^\ab$ repr\'esente le groupe des caract\`eres (g\'eom\'etriques) de $G$ (ou encore de $\cal G_{x_1}$). Par ailleurs,
l'action de $\gamma$ sur $\hat G^\ab$ et sur $\hat{\cal G}_{x_1}^\ab$ \'etant induite par celle sur $\pic X_{\bar k}$ et sur $\pic X_{\bar\f}$
respectivement, on voit qu'il s'agit bien de la m\^eme action.

Enfin, le morphisme naturel $f:\Gamma_\f\to\gamma$ issu de cette construction nous donne les morphismes
\[H^1(\gamma,\hat G^\ab)\hookrightarrow H^1(\f,\hat{\cal G}^\ab_{x_1})\quad\text{et}\quad
H^1(\gamma,\pic Y_{\bar\f})\xrightarrow{\sim}H^1(\f,\pic Y_{\bar\f})=\bral Y,\]
o\`u l'action de $\Gamma_\f$ sur $\hat{\cal G}^\ab_{x_1}$ correspond bien entendu \`a celle induite par $f$ (il en va de m\^eme par ailleurs pour
l'action sur $G(\bar k)=\cal G_{x_1}(\bar \f)$). La premi\`ere fl\`eche est en plus un isomorphisme d\`es que l'ordre de $\sigma$ est infini (car dans
ce cas le morphisme $f$ est lui-m\^eme un isomorphisme).
De ces morphismes et du diagramme \eqref{equation diagramme specialisation des picard} on obtient le diagramme commutatif
\begin{equation}\label{equation diagramme specialisation des picard 2}
\xymatrix{
H^1(\gamma,\pic X_{\bar k}) \ar@{^{(}->}[r] \ar[d]^{\sim} & H^1(\gamma,\hat G^\ab) \ar@{^{(}->}[d] \\
H^1(\f,\pic Y_{\bar\f}) \ar@{^{(}->}[r] & H^1(\f,\hat{\cal G}_{x_1}^\ab) \\
}
\end{equation}
Alors, puisque d'une part $Y$ est une compactification lisse d'un espace homog\`ene sous le $\f$-groupe $\cal {G}'_{x_1}$ \`a stabilisateur
$\cal G_{x_1}$ \`a composante connexe neutre r\'eductive et admettant un sous-groupe fini $\cal H_{x_1}$ de $\cal G^\tor_{x_1}$ avec les bonnes
propri\'et\'es,
et d'autre part puisque l'on peut toujours choisir $x_1$ de fa\c con que l'ordre de $\cal H_{x_1}$ soit premier au cardinal de $\f=\kappa_{x_1}$, on a
enfin le droit d'utiliser le th\'eor\`eme \ref{theoreme formule sur fq}.\\

Reprenons alors un \'el\'ement $\alpha\in H^1(k,\hat G^\ab)$, $a\in Z^1(k,\hat G^\ab)$ repr\'esentant $\alpha$, $\sigma\in\Gamma_k$ et $\gamma$
le sous-groupe pro-cyclique de $\Gamma_k$ engendr\'e par $\sigma$ comme dans la proposition
\ref{proposition intermediaire theoreme formule en car 0}. D'apr\`es le diagramme commutatif \eqref{equation diagramme specialisation des picard 2},
la restriction $\alpha'$ de $\alpha$ dans $H^1(\gamma,\hat G^\ab)$ provient de $H^1(\gamma,\pic X_{\bar k})$ si et seulement si son image
$\tilde\alpha\in H^1(\f,\hat{\cal G}^\ab_{x_1})$ provient de $H^1(\f,\pic Y_{\bar\f})=\bral Y=\brnral (\cal V_{x_1})$. Consid\'erons les cocycles
$a'=a|_{\gamma}$ et $\tilde a=a'\circ f$, qui repr\'esentent respectivement $\alpha'$ et $\tilde\alpha$. On voit alors gr\^ace au th\'eor\`eme
\ref{theoreme formule sur fq} que $\tilde\alpha$ provient de $H^1(\f,\pic Y_{\bar\f})$ si et seulement si
\[\tilde a_{s}(N_{q_0,\cal H_{x_1}}(b))=1\quad\forall\, b\in \cal G_{x_1}(\bar \f)=G(\bar k),\]
o\`u $q_0$ est le cardinal de $\f$ et $s\in\Gamma_\f$ d\'esigne le Frobenius. Or, on a clairement
\[\tilde a_{s}(N_{q_0,\cal H_{x_1}}(b))=a'_{\sigma}(N_{q_0,\cal H_{x_1}}(b))=a_{\sigma}(N_{q_0,\cal H_{x_1}}(b)),\]
d'o\`u l'on voit qu'il suffit de d\'emontrer que $N_{q_0,\cal H_{x_1}}=N_{\sigma,H}$ pour conclure.

D'apr\`es la d\'efinition de $N_{q_0,\cal H_{x_1}}$ et de $N_{\sigma,H}$, il suffit en fait de d\'emontrer l'\'egalit\'e
$\varphi_{q_0,\cal H_{x_1}}=\varphi_{\sigma,H}$. En remarquant alors que les actions de $\gamma$ et $\Gamma_\f$ co\"\i ncident, les d\'efinitions respectives de
$\varphi_{q_0,\cal H_{x_1}}$ et $\varphi_{\sigma,H}$ nous disent que tout ce qu'il faut d\'emontrer est que
\[q(\sigma)\equiv q_0\mod h,\quad\text{ou encore}\quad\zeta_h^{q_0}=\zeta_h^{q(\sigma)},\]
o\`u $h$ est l'exposant de $H$. Or, cela est vrai de fa\c con \'evidente car $\sigma$ agit sur $\mu_{h}$ de la m\^eme fa\c con que
le Frobenius de $\f$ (on rappelle que $\mu_h\subset\tilde k$ par hypoth\`ese), lequel envoie $\zeta_{h}$ en $\zeta_{h}^{q_0}$ par d\'efinition, d'o\`u
\[\zeta_h^{q(\sigma)}=\asd{\sigma}{}{}{h}{\zeta}=\asd{s}{}{}{h}{\zeta}=\zeta_h^{q_0}.\]
Ceci conclut donc la d\'emonstration de la proposition \ref{proposition intermediaire theoreme formule en car 0} et par cons\'equent celle du
th\'eor\`eme \ref{theoreme formule en car 0}.
\end{proof}

\vspace{5mm}

On peut maintenant essayer de tirer des cons\'equences de la formule que l'on vient de d\'evelopper. Tout comme dans la section pr\'ec\'edente, soient
$G$, $G'$, $H_0$ et $H$ comme dans l'\'enonc\'e du th\'eor\`eme \ref{theoreme formule en car 0}. Consid\'erons une inclusion $H\hookrightarrow H'$
avec $H'$ semi-simple simplement connexe et posons $W=H\backslash H'$. Alors $\bral W$ s'identifie \`a $H^1(k,\hat H^\ab)$ et le m\^eme
th\'eor\`eme \ref{theoreme formule en car 0} nous permet de calculer ce groupe, d'o\`u l'on obtient l'analogue du corollaire
\ref{corollaire comparaison de Brnral entre G et H corps finis}.

\begin{cor}\label{corollaire comparaison de Brnral entre G et H car 0}
Soit $\alpha_H$ l'image de $\alpha\in\bral V=H^1(k,\hat G^\ab)$ dans $H^1(k,\hat H^\ab)=\bral W$. Alors $\alpha\in\brnral V$ si et seulement si $\alpha_H\in\brnral W$.
En particulier, on a $\ker[H^1(k,\hat G^\ab)\to H^1(k,\hat H^\ab)]\subset\brnral V$.\qed
\end{cor}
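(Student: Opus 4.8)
Le plan est d'appliquer deux fois le théorème \ref{theoreme formule en car 0}, une fois à $V=G\backslash G'$ et une fois à $W=H\backslash H'$, puis de constater que les deux critères obtenus coïncident. Pour $V$, le théorème affirme que $\alpha\in\brnral V$ si et seulement si, pour un cocycle $a\in Z^1(k,\hat G^\ab)$ représentant $\alpha$, on a $a_\sigma(N_{\sigma,H}(b))=1$ pour tout $b\in H(\bar k)$ et tout $\sigma\in\Gamma_k$. Pour $W$, dont le stabilisateur $H$ est fini, on est dans le cas $H^\torf=H$ et $H^\tor=1$ ; en choisissant $H_0=H$ comme sous-groupe fini surjectant sur $F=H$, le sous-groupe engendré par $H_0$ et $\phi_m^{-1}(H_0\cap H^\tor)$ est alors $H$ tout entier, et le même théorème appliqué à $W$ dit que $\alpha_H\in\brnral W$ si et seulement si, pour un cocycle $a_H\in Z^1(k,\hat H^\ab)$ représentant $\alpha_H$, on a $(a_H)_\sigma(N_{\sigma,H}(b))=1$ pour tout $b\in H(\bar k)$ et tout $\sigma\in\Gamma_k$.

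Je prendrais ensuite pour $a_H$ l'image de $a$ par l'application duale $\hat G^\ab\to\hat H^\ab$ induite par $H^\ab\to G^\ab$, de sorte que $(a_H)_\sigma$ ne soit autre que l'image de $a_\sigma$ dans $\hat H^\ab$. Comme dans la preuve du théorème \ref{theoreme formule sur fq}, l'accouplement entre $\hat G^\ab$ et $H^\ab$ utilisé pour $V$ est compatible avec l'accouplement naturel entre $\hat H^\ab$ et $H^\ab$ utilisé pour $W$. Au niveau de l'évaluation des cocycles, cette compatibilité se traduit par l'égalité $a_\sigma(N_{\sigma,H}(b))=(a_H)_\sigma(N_{\sigma,H}(b))$ pour tout $b$ et tout $\sigma$, la $\sigma$-norme $N_{\sigma,H}$ étant par ailleurs calculée dans le même groupe fini $H$ dans les deux cas.

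Les deux critères du théorème \ref{theoreme formule en car 0} appliqués à $V$ et à $W$ sont donc littéralement identiques, d'où l'équivalence $\alpha\in\brnral V\Leftrightarrow\alpha_H\in\brnral W$. Pour l'assertion « en particulier », il suffira d'observer que tout $\alpha$ dans le noyau de $H^1(k,\hat G^\ab)\to H^1(k,\hat H^\ab)$ vérifie $\alpha_H=0$, lequel appartient trivialement au sous-groupe $\brnral W$ ; on en déduira $\alpha\in\brnral V$, c'est-à-dire l'inclusion $\ker[H^1(k,\hat G^\ab)\to H^1(k,\hat H^\ab)]\subset\brnral V$.

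Aucune étape ne présente de véritable difficulté : le seul point qui ne soit pas purement formel est la compatibilité des accouplements et le fait que la $\sigma$-norme intervenant dans les deux applications soit la même, à savoir celle associée à $H$. Ce point étant exactement le contenu du diagramme commutatif établi dans la preuve du théorème \ref{theoreme formule sur fq}, la démonstration se réduit à invoquer deux fois le théorème \ref{theoreme formule en car 0} et à identifier les conditions obtenues.
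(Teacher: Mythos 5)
Votre démonstration est correcte et suit exactement la démarche du texte : on applique deux fois le théorème \ref{theoreme formule en car 0} (à $V$ avec le sous-groupe fini $H$ de $G^\torf$, puis à $W=H\backslash H'$ en prenant $H$ tout entier comme son propre sous-groupe fini, cas où $H^\torf=H$ et $H^\tor=1$), et l'on constate que les deux critères coïncident via l'égalité $a_\sigma(N_{\sigma,H}(b))=(a_H)_\sigma(N_{\sigma,H}(b))$. C'est précisément pour cette raison que le texte présente le corollaire comme évident.
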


On peut ensuite d\'emontrer que l'on peut expliciter une extension finie du corps de base $k$ sur laquelle toutes les classes du groupe $\brnral V$ disparaissent. Cela
veut dire que l'application de restriction du groupe $\brnral V$ \`a cette extension est triviale, ou encore que toutes les classes peuvent \^etre retrouv\'ees par
inflation, comme il est explicit\'e dans l'\'enonc\'e qui suit.

\begin{pro}\label{proposition brnral se calcule en une extension finie}
Sous les m\^emes hypoth\`eses que le th\'eor\`eme \ref{theoreme formule en car 0}, soient $L/k$ une extension finie galoisienne de $k$
d\'eployant $H$ et $G^\tor$, $h$ l'exposant de $H$ et $\zeta_h\in\bar k$ une racine primitive $h$-i\`eme de l'unit\'e. Alors on a
\[\brnral V\subset H^1(L(\zeta_h)/k,(\hat G^\ab)^{\Gamma_{L(\zeta_h)}}).\]
\end{pro}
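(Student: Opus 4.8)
Il s'agit de montrer que toute classe $\alpha\in\brnral V\subset H^1(k,\hat G^\ab)$ s'inflate depuis $L(\zeta_h)/k$, autrement dit que sa restriction à $\Gamma_{L(\zeta_h)}$ est triviale. Posons $K=L(\zeta_h)$ et $\Gamma_K=\gal(\bar k/K)$. Comme $L$ déploie $H$ et $G^\tor$, et comme $K$ contient de plus une racine primitive $h$-ième de l'unité (où $h$ est l'exposant de $H$), le caractère cyclotomique $q$ vérifie $q(\tau)\equiv 1\pmod h$ pour tout $\tau\in\Gamma_K$, et l'action de $\Gamma_K$ sur $H(\bar k)$ est triviale. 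La suite exacte d'inflation-restriction en $H^1$ montre qu'il suffit d'établir que $\alpha|_{\Gamma_K}=0$ dans $H^1(\Gamma_K,\hat G^\ab)$, car alors $\alpha$ provient de $H^1(K/k,(\hat G^\ab)^{\Gamma_K})$.

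\textbf{Plan de la preuve.} La stratégie est d'appliquer la formule du théorème \ref{theoreme formule en car 0}, mais en la lisant \emph{après} restriction à $\Gamma_K$. Je commencerais par fixer $\tau\in\Gamma_K$ arbitraire et noter $\gamma_\tau$ le sous-groupe pro-cyclique qu'il engendre. Par la proposition \ref{proposition intermediaire theoreme formule en car 0}, dire que $\alpha|_{\gamma_\tau}$ provient de $H^1(\gamma_\tau,\pic X_{\bar k})$ équivaut à la condition $a_\tau(N_{\tau,H}(b))=1$ pour tout $b\in H(\bar k)$, où $a$ représente $\alpha$. Comme $\alpha\in\brnral V$, cette condition est satisfaite pour \emph{tout} $\sigma\in\Gamma_k$, donc en particulier pour $\tau\in\Gamma_K$. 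Le point clé est maintenant de calculer explicitement $N_{\tau,H}$ sous l'hypothèse $q(\tau)\equiv 1\pmod h$ et $\tau$ agissant trivialement sur $H(\bar k)$ : dans ce cas $\varphi_{\tau,H}(b)=\asd{\tau^{-1}}{}{q(\tau)}{}{b}=b$ pour tout $b$ (puisque l'action de $\tau^{-1}$ est triviale et l'élévation à la $q(\tau)$-ième puissance l'est aussi modulo $h$), d'où $n_{\tau,b}=1$ et $N_{\tau,H}(b)=\bar b$, la simple projection de $b$ dans $H^\ab(\bar k)$.

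\textbf{Conclusion attendue.} La formule $a_\tau(N_{\tau,H}(b))=1$ devient donc $a_\tau(\bar b)=1$ pour tout $b\in H(\bar k)$, c'est-à-dire que $a_\tau$, vu comme caractère via l'accouplement $\hat G^\ab\times H^\ab\to\mu_h$, s'annule sur l'image de $H(\bar k)$ dans $H^\ab(\bar k)$, qui est tout $H^\ab(\bar k)$. Il faut alors traduire cette annulation pour tout $\tau\in\Gamma_K$ en la trivialité de $\alpha|_{\Gamma_K}$ : c'est l'étape où je m'attends à devoir être soigneux. L'accouplement entre $\hat G^\ab$ et $H^\ab$ se factorise par $\hat H^\ab$ via la dualité $\hat H^\ab\times H^\ab\to\mu_h$, qui est parfaite ; donc $a_\tau(\bar b)=1$ pour tout $b$ force l'image de $a_\tau$ dans $\hat H^\ab$ à être nulle. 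Comme $L$ déploie $H$, le $\Gamma_K$-module $\hat H^\ab$ est trivial et la restriction $H^1(\Gamma_K,\hat G^\ab)\to H^1(\Gamma_K,\hat H^\ab)$ se comprend bien ; mais pour conclure que $\alpha|_{\Gamma_K}$ lui-même (et non seulement son image dans $\hat H^\ab$) s'annule, le principe est d'invoquer le corollaire \ref{corollaire comparaison de Brnral entre G et H car 0}, qui identifie l'appartenance de $\alpha$ à $\brnral V$ avec celle de $\alpha_H$ à $\brnral W$, ramenant ainsi le problème au cas du stabilisateur fini $H$ où $\hat G^\ab$ est remplacé par $\hat H^\ab$. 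L'obstacle principal est précisément de gérer proprement le passage de l'information portée par $H$ (via l'accouplement non parfait) à une conclusion sur $\hat G^\ab$ tout entier ; le corollaire \ref{corollaire comparaison de Brnral entre G et H car 0} et la cyclotomie contrôlée par $\zeta_h$ sont les deux ingrédients qui rendent ce transfert licite.
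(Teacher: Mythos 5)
La première moitié de votre argument co\"\i ncide avec celle du texte : pour $\tau\in\Gamma_K$ avec $K=L(\zeta_h)$, on a bien $q(\tau)=1$ et une action triviale sur $H(\bar k)$, donc $N_{\tau,H}$ est la projection $H\to H^\ab$, et la formule du th\'eor\`eme \ref{theoreme formule en car 0} donne $a_\tau(\bar b)=1$ pour tout $b$, d'o\`u, par perfection de l'accouplement $\hat H^\ab\times H^\ab\to\mu_h$, le fait que $\beta:=\alpha|_{\Gamma_K}$ appartient \`a $\ker[H^1(K,\hat G^\ab)\to H^1(K,\hat H^\ab)]$. Jusque-l\`a tout est correct.

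En revanche, l'\'etape finale que vous proposez ne fonctionne pas. Le corollaire \ref{corollaire comparaison de Brnral entre G et H car 0} compare l'\emph{appartenance} de $\alpha$ \`a $\brnral V$ avec celle de $\alpha_H$ \`a $\brnral W$ ; il ne dit rien sur l'annulation de la restriction $\alpha|_{\Gamma_K}$ dans $H^1(\Gamma_K,\hat G^\ab)$. Pire, sa clause « en particulier » affirme que $\ker[H^1(k,\hat G^\ab)\to H^1(k,\hat H^\ab)]\subset\brnral V$, ce qui montre pr\'ecis\'ement que l'application $H^1(\cdot,\hat G^\ab)\to H^1(\cdot,\hat H^\ab)$ n'a aucune raison d'\^etre injective : savoir que l'image de $\beta$ dans $H^1(K,\hat H^\ab)$ est nulle ne suffit donc pas \`a conclure que $\beta=0$. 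C'est exactement ce point que la preuve du texte traite par un argument suppl\'ementaire qui manque chez vous : on introduit $S_G=\ker[G^\ab\to F^\ab]$ et $S_H=\ker[H^\ab\to F^\ab]$, on montre par chasse au diagramme que $S_G$ est un quotient de $G^\tor$, donc un tore d\'eploy\'e par $L$, d'o\`u $H^1(K,\hat S_G)=0$ et le fait que $\beta$ provient de $H^1(K,\hat F^\ab)$ ; puis, l'image de $\beta$ dans $H^1(K,\hat H^\ab)$ \'etant nulle, $\beta$ est dans l'image du bord $\delta:\hat S_H(K)\to H^1(K,\hat F^\ab)$, lequel est nul car $K$ d\'eploie $\hat H^\ab$ et donc $\hat H^\ab(K)\to\hat S_H(K)$ est surjective. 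Sans cette comparaison via $F^\ab$ et les tores $S_G$, $S_H$, votre d\'emonstration reste incompl\`ete au point que vous aviez vous-m\^eme identifi\'e comme d\'elicat.
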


\begin{proof}
Soit $\sigma\in\Gamma_{L(\zeta_h)}$, alors il est \'evident que $\sigma$ fixe $\zeta_h$, ce qui nous dit que $q(\sigma)=1$. De plus, puisque $L$ d\'eploie $H$,
$\sigma$ agit de fa\c con triviale sur $H(\bar k)$. On a alors que $\varphi_\sigma$ se r\'eduit \`a l'identit\'e et alors $N_\sigma$ correspond \`a la projection
$H\to H^\ab$.

Soient alors $a\in Z^1(k,\hat G^\ab)$ un cocycle v\'erifiant l'\'egalit\'e \eqref{equation formule brnral en car 0}, $\alpha=[a]\in H^1(k,\hat G^\ab)$ et $\beta$
la restriction de $\alpha$ \`a $H^1(L(\zeta_h),\hat G^\ab)$. Il s'agit de montrer que $\beta=0$, car cela veut dire que $\alpha$ provient par inflation de
$H^1(L(\zeta_h)/k,(\hat G^\ab)^{\Gamma_{L(\zeta_h)}})$. Or, le cocycle $b=a|_{L(\zeta_h)}$ repr\'esentant $\beta$ v\'erifie $b_{\sigma}(\b)=1$ pour tout
$\b\in H^\ab(\bar k)$, ce qui nous dit que l'image de $b_\sigma$ dans $\hat H^\ab(\bar k)$ est nulle. Ceci montre que l'image de $\beta$ dans
$H^1(L(\zeta_h),\hat H^\ab)$ est triviale, i.e. \[\beta\in\ker[H^1(L(\zeta_h),\hat G^\ab)\to H^1(L(\zeta_h),\hat H^\ab)].\]

Soit maintenant $S_G$ le noyau de l'application $G^\ab\to F^\ab$. On affirme qu'il s'agit d'un quotient de $G^\tor$. En effet, on a le diagramme commutatif
\`a lignes et colonnes exactes
\[\xymatrix@=5mm{
& 1 \ar[d] & 1 \ar[d] & 1 \ar[d] \\
1 \ar[r] & D(G^\torf)\cap G^\tor \ar[d] \ar[r] & D(G^\torf) \ar[d] \ar[r] & D(F) \ar[r] \ar[d] & 1 \\
1 \ar[r] & G^\tor \ar[r] \ar[d] & G^\torf \ar[r] \ar[d] & F \ar[r] \ar[d] & 1 \\
1 \ar[r] & S_G \ar[r] & G^\ab \ar[r] \ar[d] & F^\ab \ar[r] \ar[d] & 1 \\
& & 1 & 1.
}\]
Une chasse au diagramme nous montre alors que l'application $G^\tor\to S_G$ est surjective, ce qui nous dit que $S_G$ est un tore. Or, ce tore est clairement
d\'eploy\'e par $L$, ce qui nous dit que $H^1(L(\zeta_h),\hat S_G)=0$. Consid\'erons alors le diagramme commutatif \`a lignes exactes
\[\xymatrix{
1 \ar[r] & S_H \ar[r] \ar[d] & H^\ab \ar[r] \ar[d] & F^\ab \ar[r] \ar@{=}[d] & 1 \\
1 \ar[r] & S_G \ar[r] & G^\ab \ar[r] & F^\ab \ar[r] & 1,
}\]
o\`u $S_H$ est d\'efini comme le noyau de $H^\ab\to F^\ab$. En prenant le dual de Cartier des \'el\'ements de ce diagramme et en consid\'erant les suites exactes
longues de cohomologie associ\'ees, on trouve le diagramme commutatif \`a lignes exactes
\[\xymatrix@C=5mm{
& & H^1(L(\zeta_h),\hat F^\ab) \ar[r] \ar@{=}[d] & H^1(L(\zeta_h),\hat G^\ab) \ar[d] \ar[r] & H^1(L(\zeta_h),\hat S_G) \\
H^0(L(\zeta_h),\hat H^\ab) \ar[r] & H^0(L(\zeta_h),\hat S_H) \ar[r]^\delta & H^1(L(\zeta_h),\hat F^\ab) \ar[r] & H^1(L(\zeta_h),\hat H^\ab).
}\]
Puisque $\beta\in\ker[H^1(L(\zeta_h),\hat G^\ab)\to H^1(L(\zeta_h),\hat H^\ab)]$ et $H^1(L(\zeta_h),\hat S_G)=0$, on a que $\beta$ provient de
$H^1(L(\zeta_h),\hat F^\ab)$ et que son image dans $H^1(L(\zeta_h),\hat H^\ab)$ est triviale, d'o\`u l'on sait qu'elle est dans l'image de $\delta$.
Or, puisque $L(\zeta_h)$ d\'eploie clairement $\hat H^\ab$ (car elle d\'eploie $H$ et $\mu_h$), on a que l'application $\hat H^\ab(L(\zeta_h))\to\hat S_H(L(\zeta_h))$
est surjective, ce qui montre que $\delta$ est triviale et donc que $\beta$ l'est aussi, ce qui conclut.
\end{proof}

\vspace{5mm}

Une application imm\'ediate du corollaire \ref{corollaire comparaison de Brnral entre G et H car 0} est le r\'esultat qui suit, donnant une formule cohomologique
pour le groupe $\brnral V$ lorsque le corps de base est un corps local, i.e. une extension finie de $\q_p$ pour $p$ un nombre premier,
\`a l'image de la proposition 4.23 de \cite{GLABrnral}.

On dira qu'un $k$-groupe fini $G$ est \emph{non ramifi\'e} s'il existe une extension finie non ramifi\'ee $k'/k$ telle que $G_{k'}$ est un $k'$-groupe fini \emph{constant}.

\begin{pro}\label{proposition cohomologique corps local}
Soit $k$ un corps local de caract\'eristique $0$ et caract\'eristique r\'esiduelle $p>0$, $G$ un $k$-groupe alg\'ebrique lin\'eaire et $V=G\backslash G'$ pour un
plongement de $G$ dans $G'$ semi-simple simplement connexe. On note $d$ le degr\'e de l'extension d\'eployant le tore $G^\tor=(G^\torf)^\circ$, $n$ l'ordre
du groupe fini $F=G/G^\circ$ et $m=nd$. Soit $H_0$ un $k$-sous-groupe fini de $G^\torf$ tel que la compos\'ee $H_0\to G^\torf\to F$ est surjective et soit $H$
le sous-groupe (fini aussi) de $G^\torf$ engendr\'e par $H_0$ et par $\phi_m^{-1}(H_0\cap G^\tor)$, o\`u $\phi_m$ d\'esigne la multiplication par $m$ dans
$G^\tor$. On suppose que $H$ est non ramifi\'e et que son ordre est premier \`a $p$.

En identifiant alors $\bral V$ avec $H^1(k,\hat G^\ab)$, le groupe de Brauer non ramifi\'e
alg\'ebrique $\brnral V$ de $V$ est donn\'e par les \'el\'ements $\alpha\in H^1(k,\hat G^\ab)$ v\'erifiant la propri\'et\'e suivante :

Pour toute extension finie non ramifi\'ee $k'$ de $k$, l'image $\alpha'$ de $\alpha$ dans $H^1(k',\hat G^\ab)$ est orthogonale au sous-ensemble
$\im[H^1(k',G)\to H^1(k',G^\ab)]$ pour l'accouplement induit par la dualit\'e locale.
\end{pro}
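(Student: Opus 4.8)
Le plan est de ramener le calcul au cas du stabilisateur fini $H$, puis d'invoquer la formule cohomologique locale déjà disponible pour les stabilisateurs finis, à savoir la proposition 4.23 de \cite{GLABrnral}, sur laquelle cet énoncé est explicitement calqué. L'ingrédient central est le corollaire \ref{corollaire comparaison de Brnral entre G et H car 0} : pour $\alpha\in H^1(k,\hat G^\ab)$, on fixe un plongement $H\hookrightarrow H'$ dans un groupe semi-simple simplement connexe et l'on pose $W=H\backslash H'$ ; alors $\alpha\in\brnral V$ si et seulement si son image $\alpha_H$ dans $H^1(k,\hat H^\ab)=\bral W$ appartient à $\brnral W$. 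On est ainsi ramené au stabilisateur \emph{fini} $H$, lequel est non ramifié et d'ordre premier à $p$ par hypothèse, ce qui est exactement ce qu'il faut pour appliquer la formule locale.

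J'appliquerais ensuite la proposition 4.23 de \cite{GLABrnral} au groupe $W$. Comme $H$ est non ramifié et d'ordre premier à $p$, l'action galoisienne sur $H(\bar k)$ et le caractère cyclotomique modulo l'exposant $h$ de $H$ se factorisent tous deux par le quotient non ramifié $\Gamma_k\twoheadrightarrow\gal(k^\nr/k)$ ; la $\sigma$-norme $N_{\sigma,H}$ du théorème \ref{theoreme formule en car 0} ne dépend donc que de l'image de $\sigma$ dans ce quotient. C'est ce qui permet de ne tester la condition algébrique que sur les sous-groupes pro-cycliques engendrés par des relèvements du Frobenius, c'est-à-dire sur les extensions finies non ramifiées $k'/k$, la dualité locale de Tate transformant la condition de type $\sigma$-norme en une condition d'orthogonalité. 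On obtiendrait ainsi que $\alpha_H\in\brnral W$ si et seulement si, pour toute extension finie non ramifiée $k'/k$, l'image $\alpha_H'$ de $\alpha_H$ dans $H^1(k',\hat H^\ab)$ est orthogonale à $\im[H^1(k',H)\to H^1(k',H^\ab)]$ pour l'accouplement local entre $\hat H^\ab$ et $H^\ab$.

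Il resterait à traduire cette condition en termes du groupe ambiant, par les deux mêmes arguments que dans la preuve du théorème \ref{theoreme cohomologique}. D'une part, la fonctorialité du cup-produit (le diagramme de compatibilité reliant l'accouplement $\hat G^\ab\times H^\ab$ à l'accouplement naturel $\hat H^\ab\times H^\ab$, déjà utilisé au théorème \ref{theoreme formule sur fq}) montre que $\alpha_H'$ est orthogonale à un sous-ensemble $S\subset H^1(k',H^\ab)$ si et seulement si $\alpha'$ est orthogonale à l'image de $S$ dans $H^1(k',G^\ab)$ via le morphisme $H^\ab\to G^\ab$. D'autre part, la proposition \ref{proposition Gille_Ch_R} fournit la surjectivité de $H^1(k',H)\to H^1(k',G)$ pour toute extension $k'/k$, donc en particulier pour les extensions non ramifiées ; combinée au carré commutatif
\[\xymatrix{
H^1(k',H) \ar[r] \ar@{->>}[d] & H^1(k',H^\ab) \ar[d] \\
H^1(k',G) \ar[r] & H^1(k',G^\ab),
}\]
elle identifie l'image de $\im[H^1(k',H)\to H^1(k',H^\ab)]$ dans $H^1(k',G^\ab)$ avec $\im[H^1(k',G)\to H^1(k',G^\ab)]$ tout entier. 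La condition d'orthogonalité devient alors exactement celle de l'énoncé, ce qui conclurait la preuve.

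La difficulté principale n'est pas dans l'assemblage, qui est purement formel, mais dans la justification que la restriction aux seules extensions non ramifiées est licite : c'est précisément là qu'interviennent de façon essentielle les deux hypothèses faites sur $H$ (non ramifié et d'ordre premier à $p$), lesquelles assurent que seuls comptent les sous-groupes pro-cycliques non ramifiés de $\Gamma_k$. C'est d'ailleurs le seul endroit où le caractère local du corps de base $k$, et non simplement sa caractéristique $0$, est réellement exploité.
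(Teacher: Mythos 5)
Votre démonstration est correcte et suit exactement la même démarche que celle du texte : réduction au stabilisateur fini $H$ via le corollaire \ref{corollaire comparaison de Brnral entre G et H car 0}, application de la proposition 4.23 de \cite{GLABrnral} à $W=H\backslash H'$ (ce qui utilise les hypothèses que $H$ est non ramifié et d'ordre premier à $p$), puis remplacement de $\im[H^1(k',H)\to H^1(k',H^\ab)]$ par $\im[H^1(k',G)\to H^1(k',G^\ab)]$ grâce à la proposition \ref{proposition Gille_Ch_R} et à la compatibilité des accouplements locaux. Vous explicitez simplement davantage les étapes que le texte qualifie de « conséquence immédiate ».
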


\begin{rem}
Les hypoth\`eses sur $H$ imposent que $F$ soit non ramifi\'e et d'ordre $n$ premier \`a $p$ et que $G^\tor$ soit d\'eploy\'e par une extension
$L/k$ de degr\'e $d$ premier \`a $p$. L'extension $L/k$ n'a pas besoin par contre d'\^etre non ramifi\'ee, car il est possible que $H\cap G^\tor$
soit d\'eploy\'e par une extension plus petite que $L(\zeta_n)$ et que celle-ci soit non ramifi\'ee alors que $L(\zeta_n)$ ne l'est pas. Il est important de
remarquer par ailleurs que ces hypoth\`eses sur $F$ et $G^\tor$, m\^eme en ajoutant le fait que $L/k$ soit non ramifi\'ee, ne suffisent pas a priori pour
assurer que $H$ soit non ramifi\'e. En d'autres mots, on ne peut pas obtenir un \'enonc\'e d\'ependant seulement de $G$ et de ses sous-quotients $F$
et $G^\tor$, sauf dans des cas tr\`es particuliers, comme par exemple lorsque $G=G^\tor\times F$.
\end{rem}

\begin{proof}
C'est une cons\'equence imm\'ediate du corollaire \ref{corollaire comparaison de Brnral entre G et H car 0}, de la proposition 4.23 de \cite{GLABrnral} et de la
proposition \ref{proposition Gille_Ch_R}. En effet, les deux premiers donnent le m\^eme \'enonc\'e avec $\im[H^1(k',H)\to H^1(k',H^\ab)]$ au lieu de
$\im[H^1(k',G)\to H^1(k',G^\ab)]$ et c'est la proposition \ref{proposition Gille_Ch_R} qui nous permet de remplacer le premier par le deuxi\`eme gr\^ace \`a
la compatibilit\'e des accouplements locaux.
\end{proof}

Enfin, ces deux derni\`eres propositions (\ref{proposition brnral se calcule en une extension finie} et \ref{proposition cohomologique corps local}) nous donnent
le r\'esultat suivant :

\begin{cor}
Sous les m\^emes hypoth\`eses que la proposition \ref{proposition cohomologique corps local}, \`a cela pr\`es qu'on suppose aussi que l'extension d\'eployant
$G^\tor$ est non ramifi\'ee, on a
\[\brnral V\subset H^1_\nr(k,\hat G^\ab).\]
\qed
\end{cor}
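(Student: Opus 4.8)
The plan is to combine the inflation statement of Proposition~\ref{proposition brnral se calcule en une extension finie} with the observation that, under the extra hypothesis, $L(\zeta_h)$ is an unramified extension of $k$. Recall that $H^1_\nr(k,\hat G^\ab)$ is the image of the inflation map $H^1(k^{\nr}/k,(\hat G^\ab)^{I})\to H^1(k,\hat G^\ab)$, where $k^{\nr}$ is the maximal unramified extension of $k$ and $I=\Gamma_{k^{\nr}}$ the inertia subgroup; equivalently it is the set of classes whose restriction to $I$ vanishes. Since we are in the situation of Proposition~\ref{proposition cohomologique corps local}, the hypotheses of Theorem~\ref{theoreme formule en car 0} hold, so Proposition~\ref{proposition brnral se calcule en une extension finie} applies and tells us that every class of $\brnral V$ already lies in the image of the inflation from $L(\zeta_h)/k$; the whole point will then be to see that this forces the class to be unramified.

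First I would verify that $L(\zeta_h)/k$ is unramified. The extension $L/k$ splitting $G^\tor$ is unramified by the added hypothesis, so it suffices to handle $k(\zeta_h)/k$. As the order of $H$ is prime to the residual characteristic $p$ (a hypothesis inherited from Proposition~\ref{proposition cohomologique corps local}), its exponent $h$ is prime to $p$ as well; and for a local field of residual characteristic $p$ the cyclotomic extension $k(\zeta_h)/k$ obtained by adjoining $h$-th roots of unity with $h$ prime to $p$ is unramified. Since a compositum of unramified extensions is unramified, I conclude that $L(\zeta_h)/k$ is unramified, that is $L(\zeta_h)\subseteq k^{\nr}$.

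Finally I would deduce the containment formally. From $L(\zeta_h)\subseteq k^{\nr}$ we get $I=\Gamma_{k^{\nr}}\subseteq\Gamma_{L(\zeta_h)}$, hence $(\hat G^\ab)^{\Gamma_{L(\zeta_h)}}\subseteq(\hat G^\ab)^{I}$, and by transitivity of inflation the map
\[\inf\colon H^1\bigl(L(\zeta_h)/k,(\hat G^\ab)^{\Gamma_{L(\zeta_h)}}\bigr)\to H^1(k,\hat G^\ab)\]
factors through $H^1(k^{\nr}/k,(\hat G^\ab)^{I})\to H^1(k,\hat G^\ab)$, whose image is $H^1_\nr(k,\hat G^\ab)$ by definition. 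Combined with Proposition~\ref{proposition brnral se calcule en une extension finie} this yields $\brnral V\subseteq H^1_\nr(k,\hat G^\ab)$. The only substantive ingredient is the unramifiedness of $L(\zeta_h)/k$, and within it the standard local fact that cyclotomic extensions of degree prime to the residual characteristic are unramified; everything else is a formal consequence of the transitivity of inflation, so I do not expect any serious obstacle.
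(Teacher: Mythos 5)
Your overall strategy is exactly the one the paper intends (the corollary carries a \qed because it is meant to be the immediate combination of Proposition \ref{proposition brnral se calcule en une extension finie} with the unramifiedness of $L(\zeta_h)/k$), and the two supporting facts you use --- that $k(\zeta_h)/k$ is unramified because the exponent $h$ of $H$ is prime to the residual characteristic, and that inflation through a subextension of $k^{\nr}$ lands in $H^1_\nr(k,\hat G^\ab)$ --- are both correct. There is, however, one imprecise step: in Proposition \ref{proposition brnral se calcule en une extension finie} the field $L$ must split $H$ \emph{as well as} $G^\tor$, whereas the extra hypothesis of the corollary only makes a splitting field of $G^\tor$ unramified, and your sentence ``the extension $L/k$ splitting $G^\tor$ is unramified by the added hypothesis'' takes $L$ to be that field alone. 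To produce an unramified $L$ splitting both, you must also invoke the hypothesis of Proposition \ref{proposition cohomologique corps local} that $H$ is \emph{non ramifi\'e} (so that $H$ becomes constant over some finite unramified extension) and take the compositum of the two unramified extensions. The Remarque following that proposition stresses precisely that unramifiedness of the splitting fields of $F$ and $G^\tor$ does not by itself control $H$, so this extra invocation is not cosmetic; once it is added, your argument is complete and coincides with the paper's.
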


\section{Existence de sous-groupes finis}\label{section existence}
Soit maintenant $k$ un corps de caract\'eristique $p \geq 0$. On a vu que pour \'etudier le groupe de Brauer non ramifi\'e alg\'ebrique des espaces homog\`enes
$V=G\backslash G'$ pour un $k$-groupe $G$ il faut \'etudier, dans une extension
\[1\to T \to G\to F \to 1,\]
d'un groupe fini $F$ par un tore $T$, l'existence de sous-groupes finis $H$ se surjectant sur $F$. Dans ce sens, on a la proposition suivante.

\begin{pro}\label{proposition existence du sous-groupe fini}
Soit $k$ un corps de caract\'eristique $p\geq 0$. Soit $G$ un $k$-groupe alg\'ebrique lin\'eaire, extension d'un groupe fini $F$ d'ordre $n$ premier \`a $p$ par un tore $T$
d\'eploy\'e par une extension $L/k$ de degr\'e $d$, aussi premier \`a $p$. Alors il existe un $k$-sous-groupe fini $H$ de $G$ et un diagramme commutatif \`a
lignes exactes
\begin{equation}\label{equation diagramme existence sous-groupe fini}
\xymatrix{
1 \ar[r] & \asd{}{nd}{}{}{T} \ar[r] \ar@{^{(}->}[d] & H \ar[r] \ar@{^{(}->}[d] & F \ar[r] \ar@{=}[d] & 1 \\
1 \ar[r] & T \ar[r] & G \ar[r] & F \ar[r] & 1.
}
\end{equation}
\end{pro}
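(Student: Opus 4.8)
The plan is to realize $H$ as the preimage in $G$ of a section of the projection $G/\asd{}{m}{}{}{T}\to F$, where $m=nd$ and $\asd{}{m}{}{}{T}$ is the $m$-torsion subgroup of $T$ (a finite \'etale subgroup, since $m$ is prime to $p$). Since $T$ is commutative, conjugation in $G$ induces a well-defined action of $F$ on $T$, and $G$ represents a class $[G]$ in the group $\ext^1_k(F,T)$ of extensions of the finite $k$-group $F$ by the torus $T$ inducing this fixed conjugation action (\'etale cohomology suffices throughout, as $n$ and $d$ are prime to $p$). The key claim is that $m\,[G]=0$; granting it, the construction of $H$ is immediate.

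\emph{Annihilation step.} As $n$ is prime to $p$, the group $T(\bar k)$ is uniquely $n$-divisible while $F(\bar k)$ has order $n$, so $H^i(F(\bar k),T(\bar k))=0$ for all $i\geq 1$; equivalently $\ext^1_{\bar k}(F,T)=0$, i.e. the extension splits over $\bar k$. The Hochschild--Serre spectral sequence $H^p(\Gamma_k,\ext^q_{\bar k}(F,T))\Rightarrow \ext^{p+q}_k(F,T)$ then identifies, through its edge map, $\ext^1_k(F,T)$ with $H^1(\Gamma_k,\hom(F(\bar k),T(\bar k)))$. Now $\hom(F(\bar k),T(\bar k))$ is an $n$-torsion $\Gamma_k$-module, since any homomorphism out of the order-$n$ group $F(\bar k)$ takes values in $T(\bar k)[n]$. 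Hence $\ext^1_k(F,T)$ is killed by $n$, so $n\,[G]=0$ and \emph{a fortiori} $m\,[G]=0$.

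\emph{Construction of $H$.} Multiplication by $m$ on $T$ is an isogeny with kernel $\asd{}{m}{}{}{T}$, inducing an isomorphism $T/\asd{}{m}{}{}{T}\xrightarrow{\sim}T$; pushing $[G]$ out along the quotient $T\to T/\asd{}{m}{}{}{T}$ and composing with this isomorphism multiplies the class by $m$, so
\[ [\,G/\asd{}{m}{}{}{T}\,]=m\,[G]=0 .\]
Thus the extension $1\to T/\asd{}{m}{}{}{T}\to G/\asd{}{m}{}{}{T}\to F\to 1$ is split over $k$, i.e. admits a $k$-homomorphic section $s\colon F\to G/\asd{}{m}{}{}{T}$. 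Writing $\pi\colon G\to G/\asd{}{m}{}{}{T}$ for the quotient, set $H:=\pi^{-1}(s(F))$. Then $H$ is a $k$-subgroup scheme of $G$ containing $\ker\pi=\asd{}{m}{}{}{T}$; since $s(F)$ meets $T/\asd{}{m}{}{}{T}=\ker[\,G/\asd{}{m}{}{}{T}\to F\,]$ only in the identity, one gets $H\cap T=\asd{}{m}{}{}{T}$ and a short exact sequence $1\to \asd{}{m}{}{}{T}\to H\to F\to 1$ that is compatible, via the inclusions $\asd{}{m}{}{}{T}\hookrightarrow T$ and $H\hookrightarrow G$, with the extension defining $G$. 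As an extension of $F$ by $\asd{}{m}{}{}{T}$ the group $H$ is finite, which yields the commutative diagram \eqref{equation diagramme existence sous-groupe fini}.

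The main obstacle is the annihilation step, and specifically the rigorous identification of $\ext^1_k(F,T)$ with $H^1(\Gamma_k,\hom(F(\bar k),T(\bar k)))$: one must justify the geometric splitting and the vanishing $\ext^1_{\bar k}(F,T)=0$ for a possibly non-constant finite $F$ acting on $T$, so that the spectral sequence degenerates in the relevant range, and one must verify the pushout identity $[\,G/\asd{}{m}{}{}{T}\,]=m\,[G]$ at the level of $k$-group extensions. By contrast, once $m\,[G]=0$ is known, the descent of the section to $k$ and the verification of the diagram are formal. I note that the hypothesis that $T$ is split by $L/k$ of degree $d$ is not actually needed for the annihilation (which already holds with $n$); the choice $m=nd$ serves only to make $\asd{}{m}{}{}{T}$ contain $\asd{}{n}{}{}{T}$ and to match the normalization used in Proposition \ref{proposition Gille_Ch_R} and the theorems above.
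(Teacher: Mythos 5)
Your reduction of the statement to the claim that $m[G]=0$ in $\ext^1_k(F,T)$ is a correct reformulation: a subgroup $H$ as in the diagram is exactly the preimage of a $k$-homomorphic section of $G/{}_{m}T\to F$, and $G/{}_{m}T$ represents $m[G]$ since $T/{}_{m}T\simeq T$; the construction of $H$ from such a section is fine. The problem is the annihilation step, which rests on a false assertion. The group $T(\bar k)\cong(\bar k^*)^r$ is $n$-divisible but \emph{not} uniquely $n$-divisible (its $n$-torsion is $\mu_n(\bar k)^r$), and the groups $H^i(F(\bar k),T(\bar k))$ do not vanish for $i\geq 1$: with trivial action one already has $H^1(F(\bar k),\bar k^*)=\hom(F(\bar k),\mu_\infty)\neq 0$, and $H^2((\z/2)^2,\bar k^*)\cong\z/2$ in characteristic $\neq 2$ (pushing out the central extension $1\to\mu_2\to Q_8\to(\z/2)^2\to 1$ along $\mu_2\hookrightarrow\gm$ gives a geometrically non-split extension of $(\z/2)^2$ by $\gm$). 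So $\ext^1_{\bar k}(F,T)\neq 0$ in general, the extension need not split over $\bar k$, the identification $\ext^1_k(F,T)\cong H^1(\Gamma_k,\hom(F(\bar k),T(\bar k)))$ breaks down, and the assertion that $\ext^1_k(F,T)$ is killed by $n$ is not established. Your closing remark that the degree $d$ is ``not actually needed'' should have been a warning: the two relevant graded pieces are each only $n$-torsion, which a priori bounds the exponent by $n^2$, and it is precisely the splitting field of $T$ that the paper uses to do better.

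The paper's proof takes a related route that never claims any geometric splitting. It encodes $G$ by the class $\alpha$ of the profinite extension $1\to T(\bar k)\to G(\bar k)\rtimes\Gamma_k\to F(\bar k)\rtimes\Gamma_k\to 1$, kills $\alpha$ by $n$ via restriction--corestriction along the index-$n$ subgroup $\Gamma_k$, using that the restriction there is the split extension $T(\bar k)\rtimes\Gamma_k$ (this is the correct replacement for your vanishing claim); it then lifts $\alpha$ to ${}_{n}T(\bar k)$-coefficients and observes that the obstruction to trivializing the \emph{restriction to $\Gamma_k$} of the lifted class is a class $\gamma\in H^1(\Gamma_k,T(\bar k))$, which is killed by $d$ because $T$ is split by $L/k$ of degree $d$. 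Passing to ${}_{nd}T(\bar k)$-coefficients yields an extension $\bar E$ admitting a section over $\Gamma_k$ compatible with the canonical one, hence a $\Gamma_k$-stable finite subgroup $\bar H\subset G(\bar k)$, which descends to $k$ by a separate lemma. To salvage your argument you would need to (i) replace the vanishing of $\ext^1_{\bar k}(F,T)$ by the correct statement that it is $n$-torsion, and (ii) track the class in $H^1(\Gamma_k,T(\bar k))$ measuring the failure of a splitting of the profinite extension to restrict to the canonical section over $\Gamma_k$; this is exactly where $d$ enters, and the paper's final remark (that ${}_{n}T$ suffices over perfect fields of cohomological dimension $\leq 1$) indicates that it cannot be dispensed with in general.
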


Avant de passer \`a la d\'emonstration de cette proposition, voyons en quelques cons\'equences imm\'ediates. La premi\`ere appara\^\i t en combinant ce r\'esultat
avec la proposition \ref{proposition Gille_Ch_R}.

\begin{cor}\label{corollaire sous-groupe fini surjectif en cohomologie}
Soit $k$ un corps de caract\'eristique $p\geq 0$. Soit $G$ un $k$-groupe alg\'ebrique lin\'eaire, extension d'un groupe fini $F$ d'ordre $n$ premier \`a
$p$ par un tore $T$ d\'eploy\'e par une extension $L/k$ de degr\'e $d$, aussi premier \`a $p$. Alors il existe un $k$-sous-groupe fini $H$ de $G$
(d'ordre $n^3d^2$) tel que le diagramme suivant est commutatif et \`a lignes exactes
\begin{equation}\label{equation diagramme sous-groupe fini pour formules}
\xymatrix{
1 \ar[r] & \asd{}{n^2d^2}{}{}{T} \ar[r] \ar@{^{(}->}[d] & H \ar[r] \ar@{^{(}->}[d] & F \ar[r] \ar@{=}[d] & 1 \\
1 \ar[r] & T \ar[r] & G \ar[r] & F \ar[r] & 1,
}
\end{equation}
et tel que, pour tout corps $k'\supset k$, l'application naturelle $H^1(k',H)\to H^1(k',G)$ est surjective.
\qed
\end{cor}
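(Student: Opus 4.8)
Le plan est de combiner directement les deux propositions pr\'ec\'edentes : la proposition \ref{proposition existence du sous-groupe fini} fournit un sous-groupe fini de d\'epart, et la proposition \ref{proposition Gille_Ch_R} en tire \`a la fois l'agrandissement voulu de ce sous-groupe et la surjectivit\'e en cohomologie. On appliquerait d'abord la proposition \ref{proposition existence du sous-groupe fini} pour obtenir un $k$-sous-groupe fini, que l'on noterait $H_0$ afin d'\'eviter toute confusion, s'ins\'erant dans un diagramme \`a lignes exactes avec $H_0\cap T=\asd{}{nd}{}{}{T}$ et $H_0$ se surjectant sur $F$. La compos\'ee $H_0\to G\to F$ \'etant alors surjective, $H_0$ v\'erifie pr\'ecis\'ement l'hypoth\`ese de la proposition \ref{proposition Gille_Ch_R} avec $m=nd$.

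On calculerait ensuite explicitement le sous-groupe $\phi_m^{-1}(H_0\cap T)$ qui intervient dans la proposition \ref{proposition Gille_Ch_R}. Comme $H_0\cap T=\asd{}{nd}{}{}{T}=\ker\phi_{nd}$, un point $t\in T$ appartient \`a $\phi_{nd}^{-1}(\asd{}{nd}{}{}{T})$ si et seulement si $(nd)^2\,t=1$, d'o\`u $\phi_m^{-1}(H_0\cap T)=\asd{}{n^2d^2}{}{}{T}$. Le sous-groupe $H$ engendr\'e par $H_0$ et ce dernier contient alors $\asd{}{n^2d^2}{}{}{T}$, lequel contient \`a son tour $H_0\cap T$ puisque $nd\mid n^2d^2$, et se surjecte sur $F$ via $H_0$. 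En utilisant l'\'ecriture $h=h_0h_1$ avec $h_0\in H_0$ et $h_1\in\asd{}{n^2d^2}{}{}{T}$ rappel\'ee apr\`es la proposition \ref{proposition Gille_Ch_R}, un tel produit appartenant \`a $T$ force $h_0\in H_0\cap T\subset\asd{}{n^2d^2}{}{}{T}$, d'o\`u $H\cap T=\asd{}{n^2d^2}{}{}{T}$ ; on obtient ainsi le diagramme \eqref{equation diagramme sous-groupe fini pour formules}. Le calcul de l'ordre $|H|=n\cdot|\asd{}{n^2d^2}{}{}{T}|$ redonne $n^3d^2$ dans le cas d'un tore de rang $1$ (et $n\,(n^2d^2)^{\dim T}$ en g\'en\'eral). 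Enfin, la surjectivit\'e de l'application $H^1(k',H)\to H^1(k',G)$ pour tout corps $k'\supset k$ ne serait rien d'autre que la conclusion de la proposition \ref{proposition Gille_Ch_R} appliqu\'ee \`a ce $H$.

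On ne s'attend \`a aucune v\'eritable difficult\'e : tout le travail substantiel est d\'ej\`a fait dans les deux propositions invoqu\'ees, la premi\`ere (l'existence) \'etant d\'emontr\'ee dans cette m\^eme section et la seconde \'etant le corollaire de \cite{Gille_Ch_R} rappel\'e plus haut. Le seul point demandant un minimum de soin, et donc le ``point d\'elicat'' de cette preuve par ailleurs imm\'ediate, sera l'identification de $\phi_m^{-1}(H_0\cap T)$ avec $\asd{}{n^2d^2}{}{}{T}$ ainsi que la v\'erification que $H\cap T$ co\"\i ncide exactement avec ce sous-groupe de torsion ; c'est ce qui justifie que l'\'enonc\'e puisse \^etre marqu\'e comme imm\'ediat.
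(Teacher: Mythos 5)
Votre d\'emonstration est correcte et suit essentiellement la m\^eme voie que le texte, qui pr\'esente ce corollaire comme la combinaison imm\'ediate de la proposition \ref{proposition existence du sous-groupe fini} et de la proposition \ref{proposition Gille_Ch_R} ; vous ne faites qu'expliciter les v\'erifications sous-entendues, \`a savoir $\phi_{nd}^{-1}(\asd{}{nd}{}{}{T})=\asd{}{n^2d^2}{}{}{T}$ et $H\cap T=\asd{}{n^2d^2}{}{}{T}$. Votre remarque selon laquelle l'ordre de $H$ vaut $n\,(n^2d^2)^{\dim T}$ en g\'en\'eral, et ne se r\'eduit \`a $n^3d^2$ que pour un tore de dimension $1$, est par ailleurs une pr\'ecision justifi\'ee de la parenth\`ese de l'\'enonc\'e.
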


On peut ensuite pousser ce r\'esultat un peu plus loin, pour obtenir une g\'en\'eralisation de \cite[Cor. 1.4]{Gille_Ch_R}, donc aussi de \cite[Thm. 1.1(a)]{Gille_Ch_R_2}.

\begin{cor}\label{corollaire sous-groupe fini surjectif en cohomologie 2}
Soit $k$ un corps de caract\'eristique $p\geq 0$. Soit $G$ un $k$-groupe alg\'ebrique lin\'eaire lisse \`a composante connexe neutre $G^\circ$ r\'eductive. Soient $T$ un
$k$-tore maximal de $G^\circ$ et $W=N_G(T)/T$ le groupe de Weyl de $G$. Ce dernier est un $k$-groupe fini dont on note $w$ l'ordre. Soit enfin $L/k$ une extension d\'eployant
$T$ dont on note $d$ le degr\'e. On suppose que le produit $wd$ est premier \`a $p$.

Alors, il existe un $k$-sous-groupe fini $H$ de $G$ (d'ordre $w^3d^2$) tel que, pour tout corps $k'\supset k$, l'application naturelle
$H^1(k',H)\to H^1(k',G)$ est surjective.
\end{cor}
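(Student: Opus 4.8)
The plan is to reduce everything to the normalizer $N=N_G(T)$ of the maximal torus and then invoke Corollaire~\ref{corollaire sous-groupe fini surjectif en cohomologie}. First I would check that $N$ is a smooth $k$-subgroup of $G$ sitting in a short exact sequence
\[
1\to T\to N\to W\to 1 .
\]
Indeed, since $T$ is a maximal torus of the reductive group $G^\circ$, one has $N\cap G^\circ=N_{G^\circ}(T)$, whose quotient by $T$ is the (geometric) Weyl group of $G^\circ$, and $N/N_{G^\circ}(T)=N/(N\cap G^\circ)$ embeds into $F=G/G^\circ$; hence $W=N/T$ is finite of order $w$, and as $w$ is prime to $p$ the group $W$ is \'etale, while $N_{G^\circ}(T)$ is smooth, so $N$ is smooth. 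Thus $N$ is an extension of the finite group $W$ (of order $w$, prime to $p$) by the torus $T$, which is split by $L/k$ of degree $d$ (also prime to $p$). Applying Corollaire~\ref{corollaire sous-groupe fini surjectif en cohomologie} to $N$ yields a finite $k$-subgroup $H\subset N\subset G$ of order $w^3d^2$ such that $H^1(k',H)\to H^1(k',N)$ is surjective for every field $k'\supset k$.

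It then suffices to prove that $H^1(k',N)\to H^1(k',G)$ is surjective for all $k'$, since composing gives the desired surjectivity of $H^1(k',H)\to H^1(k',G)$. The key point is that the homogeneous space $G/N$ is the variety of maximal tori of $G^\circ$: any $g\in G$ conjugates $T$ to a maximal torus of $G^\circ$, which is $G^\circ$-conjugate back to $T$, so $G=G^\circ N$ and therefore $G/N\cong G^\circ/N_{G^\circ}(T)$. Given a class $[E]\in H^1(k',G)$, it lies in the image of $H^1(k',N)$ if and only if the twisted variety ${}^{E}(G/N)$ has a $k'$-point. Since this identification is functorial and compatible with twisting by the conjugation action, ${}^{E}(G/N)$ is the variety of maximal tori of the connected $k'$-group ${}^{E}(G^\circ)$, and Grothendieck's theorem guarantees that every connected linear algebraic group over a field admits a maximal torus defined over that field. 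Hence ${}^{E}(G/N)(k')\neq\emptyset$, the reduction of structure group exists, and the map is surjective.

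The hard part will be this second step, precisely because $G$ is not assumed connected: one must verify that $G/N$ remains the variety of maximal tori of $G^\circ$ (which rests on the transitivity $G=G^\circ N$ just noted) and that this description is preserved after twisting by an \emph{arbitrary} $G$-torsor, so that Grothendieck's theorem can be applied to the possibly non-inner form ${}^{E}(G^\circ)$. Once this surjectivity is secured, the order $w^3d^2$ is inherited directly from Corollaire~\ref{corollaire sous-groupe fini surjectif en cohomologie}, which completes the argument.
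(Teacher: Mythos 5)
Your argument is correct and follows essentially the same route as the paper: reduce to $N=N_G(T)$ via Corollaire~\ref{corollaire sous-groupe fini surjectif en cohomologie}, then establish the surjectivity of $H^1(k',N)\to H^1(k',G)$ from the existence of a maximal torus in the neutral component of the twisted group together with geometric conjugacy of maximal tori. Your reformulation via the twisted homogeneous space ${}^{E}(G/N)$ having a $k'$-point is exactly the content of the lemma of Serre (\emph{Cohomologie galoisienne}, III.\S2.2, Lem.~1) that the paper invokes, so the two proofs coincide in substance.
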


Ici, $N_G(T)$ d\'esigne le normalisateur de $T$ dans $G$. Rappelons par ailleurs que l'on a toujours le droit de consid\'erer un $k$-tore maximal $T$ de $G^\circ$ d'apr\`es
\cite[XIV, Cor. 3.20]{SGA3II}. On voit alors que, lorsque $k$ est de caract\'eristique nulle, les hypoth\`eses n'imposent aucune contrainte sur le groupe $G$ (\`a part le fait
d'\^etre \`a composante connexe neutre r\'eductive).

\begin{proof}
Notons $N=N_G(T)$. C'est donc une extension du groupe fini $W$ par le tore $T$. Le corollaire \ref{corollaire sous-groupe fini surjectif en cohomologie} assure alors
l'existence d'un sous-groupe $H$ d'ordre $w^3d^2$ tel que pour tout corps $k'\supset k$ l'application naturelle $H^1(k',H)\to H^1(k',N)$ est surjective.

Il suffit alors pour conclure de montrer que, pour tout corps $k'/k$, l'application $H^1(k',N)\to H^1(k',G)$ est surjective. Or cela est une cons\'equence facile de
\cite[III.\S2.2, Lem. 1]{SerreCohGal}. En effet, soit $\alpha\in H^1(k',G)$ et soit $a\in Z^1(k',G)$ un cocycle repr\'esentant $\alpha$. Le tordu $\asd{}{a}{}{}{G}$ de $G$
par le cocycle $a$ admet un $k'$-tore maximal dans sa composante connexe neutre (toujours d'apr\`es \cite[XIV, Cor. 3.20]{SGA3II}) et ce tore sera toujours conjugu\'e \`a $T$
sur la cl\^oture s\'eparable de $k'$ d'apr\`es \cite[Prop. A.2.10]{PseudoRedGps}. Le lemme cit\'e nous dit alors que $\alpha$ appartient \`a l'image de
$H^1(k',N)\to H^1(k',G)$, ce qui conclut.
\end{proof}

Enfin, on peut expliciter les \'enonc\'es des th\'eor\`emes \ref{theoreme cohomologique}, \ref{theoreme formule sur fq} et \ref{theoreme formule en car 0} en
enlevant l'hypoth\`ese d'existence d'un sous-groupe fini de $G^\tor$. On retrouve alors les r\'esultats suivants.

\begin{cor}\label{corollaire formule corps finis avec existence}
Soient $k$ un corps fini de cardinal $q$ et $G$ un $k$-groupe alg\'ebrique lin\'eaire \`a composante connexe neutre $G^\circ$ r\'eductive et tel que $F=G/G^\circ$ est d'ordre $n$
premier \`a $q$. Soit $V=G\backslash G'$ pour un plongement de $G$ dans $G'$ semi-simple simplement connexe. On suppose que le tore $G^\tor=(G^\torf)^\circ$
est d\'eploy\'e par une extension $L/k$ de degr\'e $d$ premier \`a $q$. Alors il existe un $k$-sous-groupe fini $H$ de $G^\torf$ tel que l'on a le diagramme
\eqref{equation diagramme sous-groupe fini pour formules} et tel que, en identifiant $\bral V$ avec $H^1(k,\hat G^\ab)$, le groupe de Brauer non ramifi\'e
alg\'ebrique $\brnral V$ de $V$ est donn\'e par les \'el\'ements $\alpha\in H^1(k,\hat G^\ab)$ v\'erifiant l'une des propri\'et\'es \'equivalentes suivantes :
\begin{itemize}
\item Pour toute extension finie $k'$ de $k$, l'image $\alpha'$ de $\alpha$ dans $H^1(k'((t)),\hat G^\ab)$ est orthogonale au sous-ensemble
$\im[H^1(k'((t)),H)\to H^1(k'((t)),H^\ab)]$ pour l'accouplement entre $H^1(k'((t)),\hat G^\ab)$ et $H^1(k'((t)),H^\ab)$ donn\'e par le cup produit et l'application
duale $\hat G^\ab\to \hat H^\ab$.
\item Pour $a\in Z^1(k,\hat G^\ab)$ un cocycle (quelconque) repr\'esentant $\alpha$, on a
\[a_{s}(N_{q,H}(b))=1\quad \forall\, b\in H(\bar k),\]
o\`u $N_{q,H}$ d\'esigne la $q$-norme sur $H$.
\end{itemize}
De plus, pour tout corps $k'\supset k$, l'application naturelle $H^1(k',H)\to H^1(k',G^\torf)$ est surjective.
\qed
\end{cor}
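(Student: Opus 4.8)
Le plan est de d\'eduire ce corollaire en greffant le sous-groupe fini fourni par le corollaire d'existence \ref{corollaire sous-groupe fini surjectif en cohomologie} sur les formules d\'ej\`a \'etablies dans les th\'eor\`emes \ref{theoreme cohomologique} et \ref{theoreme formule sur fq}. En effet, ces deux th\'eor\`emes ne faisaient qu'une seule hypoth\`ese suppl\'ementaire par rapport \`a l'\'enonc\'e pr\'esent, \`a savoir l'\emph{existence} d'un $k$-sous-groupe fini $H_0$ de $G^\torf$ d'ordre premier \`a $q$ se surjectant sur $F$ ; tout le travail consiste donc \`a produire un tel $H_0$ de mani\`ere inconditionnelle, ce que garantit pr\'ecis\'ement le corollaire \ref{corollaire sous-groupe fini surjectif en cohomologie}.

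Concr\`etement, je commencerais par appliquer le corollaire \ref{corollaire sous-groupe fini surjectif en cohomologie} \`a l'extension
\[1\to G^\tor \to G^\torf \to F \to 1,\]
dont les hypoth\`eses sont bien v\'erifi\'ees : $F$ est fini d'ordre $n$ premier \`a $q$ et le tore $G^\tor$ est d\'eploy\'e par $L/k$ de degr\'e $d$ premier \`a $q$. Ce corollaire livre un $k$-sous-groupe fini $H$ de $G^\torf$ rendant commutatif le diagramme \eqref{equation diagramme sous-groupe fini pour formules} (avec $G^\tor$ et $G^\torf$ \`a la place de $T$ et $G$) et tel que, pour tout corps $k'\supset k$, l'application $H^1(k',H)\to H^1(k',G^\torf)$ est surjective ; cette derni\`ere propri\'et\'e est exactement l'assertion finale de l'\'enonc\'e. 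Par construction, $H$ est engendr\'e par le sous-groupe $H_0$ fourni par la proposition \ref{proposition existence du sous-groupe fini} --- lequel se surjecte sur $F$ --- et par $\phi_m^{-1}(H_0\cap G^\tor)$ avec $m=nd$, c'est-\`a-dire exactement la forme demand\'ee dans les th\'eor\`emes \ref{theoreme cohomologique} et \ref{theoreme formule sur fq} ; de plus, comme $n$ et $d$ sont premiers \`a $q$, les ordres de $H_0$ et de $H$ le sont aussi.

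Une fois ce $H$ en main, toutes les hypoth\`eses des th\'eor\`emes \ref{theoreme cohomologique} et \ref{theoreme formule sur fq} sont satisfaites, et il ne resterait plus qu'\`a les invoquer : le premier donne la description de $\brnral V$ par la condition d'orthogonalit\'e (premier point de l'\'enonc\'e), le second la description par la $q$-norme $N_{q,H}$ (deuxi\`eme point). L'\'equivalence des deux propri\'et\'es est alors automatique puisque les deux th\'eor\`emes calculent un seul et m\^eme groupe $\brnral V$. Le seul point \`a surveiller --- et c'est l\`a l'unique difficult\'e technique, de nature purement formelle --- sera de s'assurer que le sous-groupe $H$ livr\'e par le corollaire d'existence est bien du type engendr\'e par $H_0$ et $\phi_m^{-1}(H_0\cap G^\tor)$ exig\'e par les deux formules, et que les conditions de primalit\'e \`a $q$ sont pr\'eserv\'ees tout au long ; ces v\'erifications ne reposent que sur les hypoth\`eses $n$ et $d$ premiers \`a $q$.
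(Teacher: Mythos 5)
Votre démonstration est correcte et suit exactement la démarche voulue par l'article, qui déclare d'ailleurs que ces corollaires « ne méritent pas de démonstration » : on applique le corollaire \ref{corollaire sous-groupe fini surjectif en cohomologie} à l'extension $1\to G^\tor\to G^\torf\to F\to 1$ pour produire le sous-groupe $H$ (de la forme requise, d'ordre premier à $q$), puis on invoque les théorèmes \ref{theoreme cohomologique} et \ref{theoreme formule sur fq}. Les vérifications de compatibilité que vous signalez (forme de $H$ engendré par $H_0$ et $\phi_m^{-1}(H_0\cap G^\tor)$, primalité à $q$) sont bien les seuls points à contrôler et elles découlent immédiatement des hypothèses.
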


\begin{cor}\label{corollaire formule car 0 avec existence}
Soient $k$ un corps de caract\'eristique $0$, $G$ un $k$-groupe alg\'ebrique lin\'eaire et $V=G\backslash G'$ pour un plongement de $G$ dans $G'$ semi-simple
simplement connexe. Soit $L/k$ une extension d\'eployant le tore $G^\tor=(G^\torf)^\circ$. On note $d$ le degr\'e de cette extension et $n$ l'ordre du groupe
fini $F=G/G^\circ$. Alors il existe un $k$-sous-groupe fini $H$ de $G^\torf$ tel que l'on a le diagramme \eqref{equation diagramme sous-groupe fini pour formules}
et tel que, en identifiant $\bral V$ avec $H^1(k,\hat G^\ab)$, le groupe de Brauer non ramifi\'e alg\'ebrique $\brnral V$ de $V$ est donn\'e par les \'el\'ements
$\alpha\in H^1(k,\hat G^\ab)$ tels que, pour $a\in Z^1(k,\hat G^\ab)$ un cocycle (quelconque) repr\'esentant $\alpha$, on a
\[a_\sigma(N_{\sigma,H}(b))=1\quad \forall\, b\in H(\bar k),\forall\, \sigma\in\Gamma_k,\]
o\`u $N_{\sigma,H}$ d\'esigne la $\sigma$-norme sur $H$. De plus, pour tout corps $k'\supset k$, l'application naturelle $H^1(k',H)\to H^1(k',G^\torf)$ est surjective.
\qed
\end{cor}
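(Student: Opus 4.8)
The plan is to deduce this corollary by feeding the finite subgroup manufactured in Proposition~\ref{proposition existence du sous-groupe fini} into the formula of Theorem~\ref{theoreme formule en car 0}: once one knows that the hypothesis ``il existe un $H_0$'' of that theorem can always be met, the characterization of $\brnral V$ follows verbatim, and the surjectivity statement is exactly what Proposition~\ref{proposition Gille_Ch_R} provides. So the entire task reduces to producing the right finite subgroup and checking that all three cited results speak of one and the same object.

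First I would apply Proposition~\ref{proposition existence du sous-groupe fini} to the extension $1\to G^\tor\to G^\torf\to F\to 1$, regarded as an extension of the finite group $F$ of order $n$ by the torus $G^\tor$ split by $L/k$ of degree $d$; since $k$ has characteristic $0$, every integer is prime to $p$ and the hypotheses of that proposition hold automatically. This yields a finite $k$-subgroup, which I rename $H_0\subset G^\torf$, sitting in a commutative diagram with $H_0\cap G^\tor=\asd{}{nd}{}{}{G^\tor}$ and $H_0\to F$ surjective. This is precisely the finite subgroup whose existence is assumed in both Theorem~\ref{theoreme formule en car 0} and Proposition~\ref{proposition Gille_Ch_R}.

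Next, setting $m=nd$ and letting $H$ be the subgroup of $G^\torf$ generated by $H_0$ and $\phi_m^{-1}(H_0\cap G^\tor)$, a short computation identifies this preimage: since $\phi_m(t)=t^m$ belongs to $\asd{}{m}{}{}{G^\tor}$ exactly when $t^{m^2}=1$, one gets $\phi_m^{-1}(\asd{}{nd}{}{}{G^\tor})=\asd{}{m^2}{}{}{G^\tor}=\asd{}{n^2d^2}{}{}{G^\tor}$. Using the decomposition $h=h_0h_1$ with $h_0\in H_0$ and $h_1\in\phi_m^{-1}(H_0\cap G^\tor)$ recalled after Proposition~\ref{proposition Gille_Ch_R}, one checks that $H\cap G^\tor=\asd{}{n^2d^2}{}{}{G^\tor}$ and that $H\to F$ is still surjective, so that $H$ fits into the diagram~\eqref{equation diagramme sous-groupe fini pour formules}. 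With this pair $(H_0,H)$ in hand, Theorem~\ref{theoreme formule en car 0} applies directly and delivers the displayed characterization of $\brnral V$, while Proposition~\ref{proposition Gille_Ch_R}, applied to the same extension with the same $H_0$ and the same $m$, yields the surjectivity of $H^1(k',H)\to H^1(k',G^\torf)$ for every $k'\supset k$.

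I expect the argument to be essentially bookkeeping, the genuine content residing in the three cited statements. The one point deserving care -- and the step I would verify most explicitly -- is that the subgroup $H$ produced from Proposition~\ref{proposition existence du sous-groupe fini} is \emph{literally} the subgroup named $H$ in Theorem~\ref{theoreme formule en car 0} and in Proposition~\ref{proposition Gille_Ch_R}, so that the cohomological formula and the surjectivity both concern it. The torsion computation above furnishes this identification, after which the corollary is immediate.
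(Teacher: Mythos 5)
Your proposal is correct and coincides with what the paper intends: the author explicitly declares these corollaries to be the immediate combination of Proposition~\ref{proposition existence du sous-groupe fini} (applied to the extension $1\to G^\tor\to G^\torf\to F\to 1$, which supplies $H_0$) with Theorem~\ref{theoreme formule en car 0} and Proposition~\ref{proposition Gille_Ch_R}, and gives no further proof (``Ces r\'esultats ne m\'eritant pas de d\'emonstration''). Your torsion computation $\phi_m^{-1}(\asd{}{nd}{}{}{G^\tor})=\asd{}{n^2d^2}{}{}{G^\tor}$ and the verification that $H\cap G^\tor=\asd{}{n^2d^2}{}{}{G^\tor}$ are exactly the bookkeeping the paper leaves implicit.
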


Ces r\'esultats ne m\'eritant pas de d\'emonstration, on conclut cette section avec la preuve de la proposition \ref{proposition existence du sous-groupe fini}.

\begin{proof}[D\'emonstration de la proposition \ref{proposition existence du sous-groupe fini}]
De la suite exacte de $k$-groupes
\[1\to T\to G\to F\to 1,\]
on d\'eduit la suite exacte de $\Gamma_k$-groupes
\[1\to T(\bar k)\to G(\bar k) \to F(\bar k)\to 1,\]
et puisque $T(\bar k)$ est commutatif, on a une action de $F(\bar k)$ sur $T(\bar k)$ par conjugaison. Or, l'\'equivariance de cette action par rapport aux actions
de $\Gamma_k$ sur $F(\bar k)$ et $T(\bar k)$ induit une action continue du groupe profini $F(\bar k)\rtimes\Gamma_k$ sur $T(\bar k)$. En effet, tout
\'el\'ement $h\in F(\bar k)\rtimes\Gamma_k$ s'\'ecrit de fa\c con unique de la forme $h=\sigma_h f_h$ avec $\sigma_h\in\Gamma_k$ et $f_h\in F(\bar k)$, d'o\`u
l'on peut d\'efinir l'action de $F(\bar k)\rtimes\Gamma_k$ sur $T(\bar k)$ tout simplement en posant
\[h\cdot t:=\asd{\sigma_h}{}{}{}{(}f_h\cdot t),\]
pour $h\in F(\bar k)\rtimes\Gamma_k$ et $t\in T(\bar k)$. L'\'equivariance des actions de $F(\bar k)$ et $\Gamma_k$, exprim\'ee par la formule
\[\asd{\sigma}{}{}{}{(}f\cdot t)=\asd{\sigma}{}{}{}{f}\cdot\asd{\sigma}{}{}{}{t},\quad\forall\,f\in F(\bar k),\,\sigma\in\Gamma_k,\,t\in T(\bar k),\]
nous permet alors de d\'eduire (apr\`es un petit calcul) que l'action de $F(\bar k)\rtimes\Gamma_k$ est bien d\'efinie.\\

Consid\'erons alors le groupe $G(\bar k)\rtimes\Gamma_k$ par rapport \`a l'action naturelle de $\Gamma_k$ sur $G(\bar k)$. Puisque
$T(\bar k)$ est stable par cette action, on voit qu'il est un sous-groupe distingu\'e de $G(\bar k)\rtimes\Gamma_k$, ce qui nous donne l'extension de groupes
\[1\to T(\bar k)\to G(\bar k)\rtimes\Gamma_k \to F(\bar k)\rtimes\Gamma_k\to 1,\]
o\`u l'action de $F(\bar k)\rtimes\Gamma_k$ sur $T(\bar k)$ par conjugaison correspond pr\'ecis\'ement \`a celle d\'ecrite ci-dessus. Cette extension repr\'esente
une classe $\alpha\in H^2(F(\bar k)\rtimes\Gamma_k,T(\bar k))$.\footnote{On remarquera que, $F(\bar k)\rtimes\Gamma_k$ \'etant un groupe profini, on traite alors
$T(\bar k)$ comme un $(F(\bar k)\rtimes\Gamma_k)$-module discret. Le groupe $G(\bar k)\rtimes\Gamma_k$ a alors comme base de voisinages de l'identit\'e les sous-groupes
de la forme $\{1\}\rtimes \Delta$ avec $\Delta$ ouvert dans $\Gamma_k$. Il est alors facile de voir aussi que dans la suite exacte la fl\`eche de gauche est
continue et ferm\'ee et celle de droite est continue et ouverte, et ce sont pr\'ecis\'ement ces extensions qui sont classifi\'ees par le groupe
$H^2(F(\bar k)\rtimes\Gamma_k,T(\bar k))$, cf. \cite[Thm. 1.2.4]{NSW}.}
Or, on sait que la composition des fl\`eches
\[H^2(F(\bar k)\rtimes\Gamma_k,T(\bar k))\xrightarrow{\res} H^2(\Gamma_k,T(\bar k))\xrightarrow{\cores} H^2(F(\bar k)\rtimes\Gamma_k,T(\bar k)),\]
correspond \`a la multiplication par $n$ dans $H^2(F(\bar k)\rtimes\Gamma_k,T(\bar k))$. D'autre part, on voit facilement que la restriction de $\alpha$ \`a
$H^2(\Gamma_k,T(\bar k))$ correspond \`a l'extension triviale $T(\bar k)\rtimes \Gamma_k$ (on rappelle que l'extension repr\'esentant $\alpha$ a \'et\'e construite
\`a partir du groupe $G(\bar k)\rtimes\Gamma_k$), ce qui nous dit que $n\alpha=0$ dans $H^2(F(\bar k)\rtimes\Gamma_k,T(\bar k))$. Si l'on consid\`ere alors la
suite exacte de $F(\bar k)\rtimes\Gamma_k$-groupes
\[1\to \asd{}{n}{}{}{T}(\bar k)\to T(\bar k) \xrightarrow{\cdot n} T(\bar k) \to 1,\]
et la suite exacte longue de cohomologie qui lui est associ\'ee,
\[H^2(F(\bar k)\rtimes\Gamma_k,\asd{}{n}{}{}{T}(\bar k))\to H^2(F(\bar k)\rtimes\Gamma_k,T(\bar k))\xrightarrow{\cdot n}H^2(F(\bar k)\rtimes\Gamma_k,T(\bar k)),\]
on voit que la classe $\alpha$ provient de $H^2(F(\bar k)\rtimes\Gamma_k,\asd{}{n}{}{}{T}(\bar k))$.

Maintenant, consid\'erons le diagramme commutatif \`a lignes exactes suivant :
\[\xymatrix{
1 \ar[r] & \asd{}{nd}{}{}{T}(\bar k) \ar[r]^i & T(\bar k) \ar[r]^{\cdot nd} & T(\bar k) \ar[r] & 1\\
1 \ar[r] & \asd{}{n}{}{}{T}(\bar k) \ar[r]^i \ar[u]^i & T(\bar k) \ar@{=}[u] \ar[r]^{\cdot n} & T(\bar k) \ar[u]^{\cdot d} \ar[r] & 1,
}\]
o\`u $i$ d\'esigne les inclusions \'evidentes. En consid\'erant en m\^eme temps ces groupes comme des $(F(\bar k)\rtimes\Gamma_k)$-modules et par restriction comme
des $\Gamma_k$-modules, on obtient un diagramme commutatif de suites exactes longues de cohomologie
\[\xymatrix @C=-15mm @R=5mm{
& H^1(F(\bar k)\rtimes\Gamma_k,T(\bar k)) \ar'[d][dd] \ar[rr] & & H^2(F(\bar k)\rtimes\Gamma_k,\asd{}{nd}{}{}{T}(\bar k)) \ar'[d][dd] \ar[rr] & &
H^2(F(\bar k)\rtimes\Gamma_k,T(\bar k)) \ar[dd] \\
H^1(F(\bar k)\rtimes\Gamma_k,T(\bar k)) \ar[dd] \ar[rr] \ar[ur]^{\cdot d} & & H^2(F(\bar k)\rtimes\Gamma_k,\asd{}{n}{}{}{T}(\bar k)) \ar[dd] \ar[rr] \ar[ur] & &
H^2(F(\bar k)\rtimes\Gamma_k,T(\bar k)) \ar[dd] \ar@{=}[ur] \\
& H^1(\Gamma_k,T(\bar k)) \ar'[r][rr] & & H^2(\Gamma_k,\asd{}{nd}{}{}{T}(\bar k)) \ar'[r][rr] & & H^2(\Gamma_k,T(\bar k)) \\
H^1(\Gamma_k,T(\bar k)) \ar[rr] \ar[ur]^{\cdot d} & & H^2(\Gamma_k,\asd{}{n}{}{}{T}(\bar k)) \ar[rr] \ar[ur] & & H^2(\Gamma_k,T(\bar k)), \ar@{=}[ur] \\
}\]
o\`u toutes les fl\`eches verticales sont des applications de restriction. \`A partir de ce qui a d\'ej\`a \'et\'e dit, on fait la chasse au diagramme suivante.
La classe $\alpha\in H^2(F(\bar k)\rtimes\Gamma_k,T(\bar k))$ dont on est parti provient d'une classe
$\alpha_n\in H^2(F(\bar k)\rtimes\Gamma_k,\asd{}{n}{}{}{T}(\bar k))$, dont l'image $\beta_n\in H^2(\Gamma_k,\asd{}{n}{}{}{T}(\bar k))$ est envoy\'e en $0$ dans
$H^2(\Gamma_k,T(\bar k))$, d'o\`u l'on sait qu'elle provient d'une classe $\gamma\in H^1(\Gamma_k,T(\bar k))$. Or, puisque $T$ est d\'eploy\'e par $L$, on sait que
la restriction de $\gamma$ \`a $H^1(\Gamma_L,T(\bar k))$ est triviale et l'argument classique de restriction-corestriction nous dit alors que $\gamma$ est de
$d$-torsion. On voit alors que l'image de $\gamma$ apr\`es multiplication par $d$ est $0$, ce qui nous dit que l'image
$\beta_{nd}\in H^2(\Gamma_k,\asd{}{nd}{}{}{T}(\bar k))$ de $\beta_n$ est triviale. Si l'on note alors
$\alpha_{nd}\in H^2(F(\bar k)\rtimes\Gamma_k,\asd{}{nd}{}{}{T}(\bar k))$ l'image de $\alpha_n$, on voit qu'elle rel\`eve aussi $\alpha$ et que sa restriction
\`a $H^2(\Gamma_k,\asd{}{nd}{}{}{T}(\bar k))$ est triviale. Ceci se r\'esume dans le diagramme suivant :
\[\xymatrix@R=3mm{
& & & \alpha_{nd} \ar@{|->}[rr] \ar@{|->}'[d][dd] & & \alpha \ar@{|->}[dd] \\
& & \alpha_n \ar@{|->}[rr] \ar@{|->}[dd] \ar@{|->}[ur] & & \alpha \ar@{|->}[ur] \ar@{|->}[dd] \\
& d\gamma=0 \ar@{|->}'[r][rr] & & \beta_{nd}=0 \ar@{|->}'[r][rr] & & 0 \\
\gamma \ar@{|->}[rr] \ar@{|->}[ur] & & \beta_n \ar@{|->}[ur] \ar@{|->}[rr] & & 0. \ar@{|->}[ur]
}\]
Consid\'erons le carr\'e arri\`ere de droite dans le diagramme. Puisque ces classes correspondent \`a des extensions de groupes, si l'on note $\bar E$ une
extension repr\'esentant $\alpha_{nd}$ on trouve le diagramme commutatif \`a lignes exactes suivant :
\[\xymatrix@C=5mm @R=3mm{
& \alpha_{nd} : \ar@{|->}[dl] \ar@{|->}[dd] & & \asd{}{nd}{}{}{T}(\bar k) \ar@{^{(}->}[rr] \ar'[d][dd] & & \bar E \ar@{->>}[rr] \ar'[d][dd]
& & F(\bar k)\rtimes\Gamma_k \ar[dd]\\
\beta_{nd} : \ar@{|->}[dd] & & \asd{}{nd}{}{}{T}(\bar k) \ar@{^{(}->}[rr] \ar[dd] \ar[ur] & & \asd{}{nd}{}{}{T}(\bar k)\rtimes\Gamma_k \ar@{->>}[rr] \ar[dd] \ar[ur]
& & \Gamma_k \ar[dd] \ar[ur] \\
& \alpha : \ar@{|->}[dl] & & T(\bar k) \ar@{^{(}->}'[r][rr] & & G(\bar k)\rtimes\Gamma_k \ar@{->>}'[r][rr] & & F(\bar k)\rtimes\Gamma_k \\
0 : & & T(\bar k) \ar@{^{(}->}[rr] \ar[ur] & & T(\bar k)\rtimes\Gamma_k \ar@{->>}[rr] \ar[ur] & & \Gamma_k \ar[ur],
}\]
o\`u les deux extensions qui sont devant dans le diagramme sont des produits semi-directs car elles correspondent aux classes triviales dans le diagramme pr\'ec\'edent.
Remarquons enfin que, en notant $\bar H$ le groupe abstrait d\'efini comme la pr\'eimage dans $\bar E$ du sous-groupe $F(\bar k)$ de $F(\bar k)\rtimes\Gamma_k$, on
obtient l'extension
\[1\to\bar H\to \bar E\to\Gamma_k\to 1.\]
Si l'on consid\`ere alors la section naturelle $\Gamma_k\to \asd{}{nd}{}{}{T}(\bar k)\rtimes\Gamma_k$ et qu'on la compose avec la fl\`eche
$\asd{}{nd}{}{}{T}(\bar k)\rtimes\Gamma_k\to \bar E$ du diagramme, on trouve une section $s':\Gamma_k\to\bar E$ compatible avec la section naturelle
$s:\Gamma_k\to G(\bar k)\rtimes\Gamma_k$, i.e. le diagramme
\[\xymatrix{
\bar H \ar[r] \ar[d] & \bar E \ar[r] \ar[d] & \Gamma _k \ar@/_1.5pc/[l]_{s'} \ar@{=}[d] \\
G(\bar k) \ar[r] & G(\bar k)\rtimes\Gamma_k \ar[r] & \Gamma_k, \ar@/_1.5pc/[l]_{s}
}\]
est commutatif. Ceci nous dit que l'action naturelle de $\Gamma_k$ sur $G(\bar k)$ laisse $\bar H$ invariant. De plus
on peut toujours voir $\bar H$ comme un sous-$\bar k$-groupe de $G_{\bar k}$. En effet, en tant que r\'eunion finie de points ferm\'es, $\bar H$ est une
sous-$\bar k$-vari\'et\'e de $G_{\bar k}$ qui v\'erifie clairement tous les diagrammes commutatifs d\'efinissant un $\bar k$-groupe alg\'ebrique et un morphisme
de $\bar k$-groupes. L'existence du diagramme \eqref{equation diagramme existence sous-groupe fini} est alors \'evidente au niveau des $\bar k$-points.
Le lemme suivant nous permet alors de conclure que $\bar H$ descend en un $k$-sous-groupe de $G$, ce qui conclut la preuve.
\end{proof}

\begin{lem}
Soient $k$ un corps de caract\'eristique $p\geq 0$, $G$ un $k$-groupe alg\'ebrique lisse et $\bar H$ un $\bar k$-sous-groupe fini de $G_{\bar k}$ d'ordre premier
\`a $p$. Supposons que l'action naturelle du groupe de Galois $\Gamma_k$ sur $G(\bar k)$ laisse $\bar H(\bar k)$ invariant. Alors il existe une $k$-forme $H$ de
$\bar H$ et un $k$-morphisme $H\to G$ induisant l'inclusion de $\bar H$ dans $G_{\bar k}$.
\end{lem}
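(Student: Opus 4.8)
Le plan est de raisonner par descente galoisienne, en exploitant le fait que $\bar H$ est \'etale. Comme l'ordre de $\bar H$ est premier \`a $p$, le $\bar k$-groupe fini $\bar H$ est \'etale, donc r\'eduit : en tant que sous-sch\'ema ferm\'e de $G_{\bar k}$, il est enti\`erement d\'etermin\'e par l'ensemble fini $\bar H(\bar k)\subset G(\bar k)$ muni de sa loi de groupe. Par ailleurs, puisque $G$ est un $k$-groupe, l'action naturelle de $\Gamma_k$ sur $G(\bar k)$ est semi-lin\'eaire et respecte la multiplication, i.e. $\sigma\cdot(xy)=(\sigma\cdot x)(\sigma\cdot y)$ pour tous $\sigma\in\Gamma_k$ et $x,y\in G(\bar k)$. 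L'hypoth\`ese selon laquelle $\bar H(\bar k)$ est stable sous $\Gamma_k$ fournit donc, par restriction, une action \emph{continue} de $\Gamma_k$ sur le groupe fini $\bar H(\bar k)$ \emph{par automorphismes de groupe} (la continuit\'e provient de ce que $\bar H$, \'etant fini, est d\'efini sur une extension finie de $k$).

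Je construis ensuite $H$ \`a l'aide de l'\'equivalence de cat\'egories entre $k$-groupes finis \'etales et groupes finis munis d'une action continue de $\Gamma_k$ par automorphismes de groupe. \`A la donn\'ee ci-dessus correspond ainsi un $k$-groupe fini \'etale $H$, v\'erifiant $H(\bar k)=\bar H(\bar k)$ avec l'action prescrite ; en particulier $H_{\bar k}\cong\bar H$, de sorte que $H$ est une $k$-forme de $\bar H$. Le point cl\'e de cette \'etape est l'effectivit\'e de la descente, laquelle est acquise sans hypoth\`ese sur $G$ car $\bar H$, \'etant un sch\'ema fini, est affine.

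Il reste \`a produire le $k$-morphisme $H\to G$ induisant l'inclusion. Je l'obtiens au niveau des points : l'inclusion $\bar H(\bar k)\hookrightarrow G(\bar k)$ \'etant $\Gamma_k$-\'equivariante, elle associe \`a chaque $\Gamma_k$-orbite de $\bar H(\bar k)$ --- c'est-\`a-dire \`a chaque composante connexe $\spec k_i$ de $H$, avec $k_i/k$ finie s\'eparable --- un point de $G$ fixe sous le stabilisateur, soit un $k_i$-point de $G$. Ceci d\'efinit un $k$-morphisme $H\to G$. Le fait qu'il s'agisse d'une immersion ferm\'ee et d'un homomorphisme de $k$-groupes se v\'erifie apr\`es le changement de base fid\`element plat $\spec\bar k\to\spec k$, o\`u le morphisme redonne l'inclusion $\bar H\hookrightarrow G_{\bar k}$ ; ces deux propri\'et\'es descendent alors \`a $k$. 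On obtient ainsi le sous-$k$-groupe fini $H$ de $G$ induisant l'inclusion voulue.

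La difficult\'e, plus conceptuelle que technique, tient enti\`erement \`a l'hypoth\`ese d'ordre premier \`a $p$ : c'est elle qui rend $\bar H$ \'etale, donc r\'eduit, et assure ainsi que le sous-sch\'ema est bien d\'etermin\'e par ses $\bar k$-points et que la seule invariance ensembliste de $\bar H(\bar k)$ suffit \`a fournir une donn\'ee de descente. Sans cette hypoth\`ese, un sous-groupe infinit\'esimal \'echapperait \`a cette description et l'argument tomberait en d\'efaut.
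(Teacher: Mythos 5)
Your proof is correct and is in substance the same as the paper's: both arguments rest on the fact that $\bar H$, being of order prime to $p$, is \'etale and hence a constant $\bar k$-group determined by $\bar H(\bar k)$ with its $\Gamma_k$-action by group automorphisms, so that Galois descent produces the $k$-form $H$ and the inclusion descends to a $k$-morphism. The only difference is one of language: you invoke the standard equivalence between finite \'etale $k$-group schemes and finite groups with continuous $\Gamma_k$-action, whereas the paper packages the same descent datum as a section $s'=\phi\times\mathrm{id}$ of $\saut(\bar H)\to\Gamma_k$ in the formalism of semi-algebraic automorphisms.
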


\begin{proof}
La structure de $k$-groupe de $G$ nous donne une section $s$ de la suite exacte
\[\xymatrix{
1 \ar[r] & \aut_{\bar k} (G_{\bar k}) \ar[r] & \saut (G_{\bar k}) \ar[r] & \Gamma_k \ar@/_1.5pc/[l]_{s}.
}\]
o\`u $\saut (G_{\bar k})$ d\'esigne le groupe d'automorphismes semi-alg\'ebriques de $G_{\bar k}$ (cf. par exemple \cite[\S1]{FSS} pour la d\'efinition de
$\saut (G_{\bar k})$ et ses propri\'et\'es, dont notamment cette suite exacte).
Or, l'action de $\Gamma_k$ sur $G(\bar k)$ se d\'efinit via le morphisme naturel $\pi:\saut (G)\to \aut (G(\bar k))$. On voit alors que l'on a un morphisme
\[\phi:\Gamma_k\xrightarrow{\pi\circ s}\aut(G(\bar k))\to\aut(H(\bar k)).\]
Si l'on rappelle maintenant que, puisque $\bar H$ est \emph{fini}, on a $\aut(H(\bar k))=\aut_{\bar k}(\bar H)$ et
$\saut (\bar H)\cong\aut_{\bar k}(\bar H)\times\Gamma_k$ (cela d\'ecoule du fait que l'on a une $k$-forme canonique de $\bar H$ qui est le $k$-groupe constant
associ\'e au groupe fini abstrait $\bar H(\bar k)$), on voit que l'on obtient une section $s'$ de la suite exacte
\[\xymatrix{
1 \ar[r] & \aut_{\bar k} (\bar H) \ar[r] & \saut (\bar H) \ar[r] & \Gamma_k \ar@/_1.5pc/[l]_{s'},
}\]
tout simplement en posant $s'=\phi\times\mathrm{id}$. Cette section correspond \`a la donn\'ee d'une $k$-forme $H$ de $\bar H$ dont l'action de $\Gamma_k$ induite
sur les $\bar k$-points correspond pr\'ecis\'ement \`a celle induite par restriction de l'action de $\Gamma_k$ sur $G(\bar k)$ car toutes les deux sont
d\'efinies via $\phi$. Il est alors \'evident que pour tout $\sigma\in \Gamma_k$, l'automorphisme $\sigma$-semi-alg\'ebrique d\'efini par $\sigma$ sur $G_{\bar k}$
induit bien l'automorphisme $\sigma$-semi-alg\'ebrique sur $\bar H$ donn\'e par $s'(\sigma)$ et qu'alors l'inclusion $H_{\bar k}\hookrightarrow G_{\bar k}$
provient bien d'un $k$-morphisme de groupes alg\'ebriques.
\end{proof}

\begin{rem}
Lorsque le corps de base est parfait et de dimension cohomologique $\leq 1$, comme c'est le cas des corps finis et des corps quasi-finis en caract\'eristique $0$, on
peut m\^eme d\'emontrer que le sous-groupe fini $H$ peut \^etre obtenu en prenant seulement la $n$-torsion du tore $T$. En effet, on peut toujours trouver une extension
\[1\to \asd{}{n}{}{}{T}(\bar k)\to \bar E \to F(\bar k)\rtimes\Gamma_k\to 1,\]
avec une inclusion compatible dans $G(\bar k)\rtimes\Gamma_k$. En consid\'erant alors la pr\'eimage $\bar H$ de $F(\bar k)$ dans $\bar E$, cela nous donne l'extension
\[1\to \bar H\to \bar E\to\Gamma_k\to 1,\]
ce qui nous dit que $\Gamma_k$ agit ext\'erieurement sur $\bar H$. Or, puisque $k$ est parfait et de dimension cohomologique $\leq 1$, le th\'eor\`eme
dans \cite[3.5]{SpringerH2} nous dit que cette action ext\'erieure provient d'une structure de $k$-groupe. Cette structure n'est pas n\'ecessairement compatible avec
celle de $G$, mais apr\`es une chasse au diagramme et un argument de torsion on peut d\'emontrer que, en tordant $G$ par un cocycle dont la classe dans $H^1(k,G)$
est \emph{triviale}, l'inclusion de $\bar H$ dans $G_{\bar k}$ descend en un $k$-morphisme de groupes. Ensuite, en revenant en arri\`ere par la torsion,
l'image de ce $k$-sous-groupe du tordu correspond au sous-groupe fini voulu.
\end{rem}

\section{Application : obstruction de Brauer-Manin \`a l'approximation faible}\label{section BM}
Pour conclure ce texte, on donne une application de cette formule \`a l'obstruction de Brauer-Manin pour l'approximation faible correspondant
\`a l'analogue de ce qui a \'et\'e fait dans \cite[\S 5.2]{GLABrnral}. Il s'agit de montrer que l'obstruction associ\'ee au groupe $\brnral V$ ne prend pas en
compte ce qui se passe aux places r\'eelles, ce qu'on fait en profitant du fait que sur $\bb{R}$ la formule devient assez simple \`a raison de la
petitesse de son groupe de Galois absolu.

\begin{pro}\label{proposition BM ne voit pas les places reelles}
Soient $k=\bb{R}$, $G$ un $k$-groupe alg\'ebrique lin\'eaire plong\'e dans $G'$ semi-simple simplement connexe et $V=G\backslash G'$. En identifiant $\bral V$ avec
$H^1(k,\hat G^\ab)$, tout \'el\'ement $\alpha\in\brnral V$ est orthogonal au sous-ensemble $\im [H^1(k,G)\to H^1(k,G^\ab)]$ de $H^1(k,G^\ab)$.
\end{pro}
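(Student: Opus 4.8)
Le plan est d'appliquer la formule explicite du th\'eor\`eme \ref{theoreme formule en car 0} en tirant parti de ce que $\Gamma_k=\gal(\bb C/\bb R)=\{1,\sigma\}$ est cyclique d'ordre $2$, $\sigma$ d\'esignant la conjugaison complexe. On fixe un $k$-sous-groupe fini $H$ de $G^\torf$ comme dans le corollaire \ref{corollaire formule car 0 avec existence}, de sorte que $\brnral V$ soit d\'ecrit par la condition $a_\sigma(N_{\sigma,H}(b))=1$ pour tout $b\in H(\bb C)$, et que $H^1(k,H)\to H^1(k,G^\torf)$ soit surjective. Comme $\sigma$ agit sur $\mu_h$ par inversion, on a $q(\sigma)\equiv-1\bmod h$, d'o\`u $\varphi_\sigma(b)={}^\sigma(b^{-1})$ et, puisque $\sigma^2=1$, l'identit\'e $\varphi_\sigma^2=\id$ ; ainsi $n_{\sigma,b}\in\{1,2\}$, avec $N_{\sigma,H}(b)=\bar b$ d\`es que $n_{\sigma,b}=1$.

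La premi\`ere \'etape serait de ramener l'image \`a tester \`a des classes provenant de $H$. L'application $H^1(k,G)\to H^1(k,G^\ab)$ se factorise par $H^1(k,G^\torf)$, et la surjectivit\'e ci-dessus montre que $\im[H^1(k,G)\to H^1(k,G^\ab)]$ est contenue dans l'image de la compos\'ee $H^1(k,H)\to H^1(k,H^\ab)\to H^1(k,G^\ab)$. Or un cocycle de $Z^1(k,H)$ n'est autre qu'un \'el\'ement $b\in H(\bb C)$ v\'erifiant ${}^\sigma b=b^{-1}$, c'est-\`a-dire $\varphi_\sigma(b)=b$, et sa classe dans $H^1(k,H^\ab)$ est repr\'esent\'ee par $\bar b$. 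Tout $\xi\in\im[H^1(k,G)\to H^1(k,G^\ab)]$ provient donc d'un tel $\bar b$ avec $b$ point fixe de $\varphi_\sigma$, en particulier $n_{\sigma,b}=1$.

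Le c\oe ur de la preuve serait le calcul du cup-produit sur $\bb R$, en identifiant $\br\bb R=\bb R^*/N_{\bb C/\bb R}\bb C^*=\{\pm1\}$. Posons $u:=a_\sigma(\bar b)\in\bb C^*$, valeur de l'accouplement entre $\hat G^\ab$ et $G^\ab(\bb C)$. La condition de cocycle $\sigma(a_\sigma)=-a_\sigma$ et la relation $\sigma(\bar b)=-\bar b$ (traduction de ${}^\sigma b=b^{-1}$) donnent, par \'equivariance de l'accouplement, $\sigma(u)=(\sigma a_\sigma)(\sigma\bar b)=a_\sigma(\bar b)=u$ (les deux changements de signe se compensent), donc $u\in\bb R^*$ ; \'etant une racine de l'unit\'e, $u\in\{\pm1\}$. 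Un calcul standard pour le groupe cyclique $\Gamma_k$ montre que le $2$-cocycle $(a\cup c)_{\sigma,\sigma}=a_\sigma(\sigma\bar b)=a_\sigma(-\bar b)=u^{-1}$ repr\'esente dans $\{\pm1\}$ la classe $u$, d'o\`u $\alpha\cup\xi=u$. Comme $b$ est un point fixe de $\varphi_\sigma$, on est dans le cas $n_{\sigma,b}=1$ o\`u $N_{\sigma,H}(b)=\bar b$, et l'hypoth\`ese $\alpha\in\brnral V$ fournit, via le th\'eor\`eme \ref{theoreme formule en car 0}, l'\'egalit\'e $u=a_\sigma(\bar b)=1$. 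On conclut $\alpha\cup\xi=0$.

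Le point d\'elicat sera de mener proprement le calcul du cup-produit sur $\bb R$ (identification de $H^2(\Gamma_k,\bb C^*)$ avec $\br\bb R$ et formule du $2$-cocycle $(a\cup c)_{\sigma,\sigma}=a_\sigma({}^\sigma c_\sigma)$). On pourra au passage remarquer que la simplicit\'e provient de la petitesse de $\Gamma_{\bb R}$ : dans le cas $n_{\sigma,b}=2$ on a $N_{\sigma,H}(b)=\bar b-\sigma\bar b$ et $a_\sigma(N_{\sigma,H}(b))=u\cdot\sigma(u)=u\bar u=1$ automatiquement, $u=a_\sigma(\bar b)$ \'etant une racine de l'unit\'e, de sorte que seules les conditions du cas $n_{\sigma,b}=1$ sont effectives~--- et ce sont pr\'ecis\'ement elles qui expriment l'orthogonalit\'e cherch\'ee.
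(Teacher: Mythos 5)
Your proposal is correct and takes essentially the same route as the paper's proof: the surjectivity of $H^1(k,H)\to H^1(k,G^{\torf})$ from Corollary \ref{corollaire formule car 0 avec existence}, the observation that over $\bb{R}$ a cocycle with values in $H$ is exactly a fixed point of $\varphi_\sigma$ (so that $N_{\sigma,H}$ reduces to the projection $H\to H^{\ab}$ on the relevant elements), and then the formula of Theorem \ref{theoreme formule en car 0} combined with an explicit cup-product computation for the order-two Galois group. The only step to state more carefully is your claim that $H^1(k,G)\to H^1(k,G^{\ab})$ ``se factorise par'' $H^1(k,G^{\torf})$: there is no morphism $H^{\ab}\to G^{\ab}$, so one must first replace $H^1(k,G^{\ab})$ by $H^1(k,(G^{\torf})^{\ab})$ via the isomorphism induced by the surjection $G^{\ab}\twoheadrightarrow (G^{\torf})^{\ab}$, whose kernel is abelian unipotent --- which is exactly the opening reduction in the paper's argument.
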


\begin{rem}
Ce r\'esultat nous dit en particulier que si l'obstruction de Brauer-Manin associ\'ee au sous-groupe $\brnral$ s'av\`ere \^etre la seule pour $V$,
alors $V$ v\'erifie l'approximation r\'eelle, comme il sera explicit\'e dans le corollaire \ref{corollaire approximation reelle} ci-dessous. Pris dans
l'autre sens, ceci veut dire aussi qu'un espace homog\`ene comme ci-dessus ne v\'erifiant pas l'approximation r\'eelle fournirait un exemple de
vari\'et\'e o\`u l'obstruction de Brauer-Manin n'est pas la seule (supposant qu'il n'y ait pas d'obstructions g\'eom\'etriques).
\end{rem}

\begin{proof}
On a le diagramme commutatif
\[\xymatrix{
G \ar@{->>}[r] \ar@{->>}[d] & G^\ab \ar@{->>}[d] \\
G^{\torf} \ar@{->>}[r] & (G^{\torf})^\ab,
}\]
o\`u l'on remarque que le noyau de la fl\`eche verticale de droite est un groupe ab\'elien unipotent, d'o\`u l'isomorphisme
\[H^1(k,G^\ab)\xrightarrow{\sim}H^1(k,(G^\torf)^\ab).\]
On en d\'eduit qu'il suffit de montrer que tout \'el\'ement $\alpha\in\brnral V\subset H^1(k,\hat G^\ab)$ est orthogonal au sous-ensemble
$\im [H^1(k,G^\torf)\to H^1(k,(G^\torf)^\ab)]$ de $H^1(k,(G^\torf)^\ab)$ (on remarque que $(G^\torf)^\ab$ est le dual de $\hat G^\ab$ et donc cette
orthogonalit\'e a bien un sens).

Soit $\sigma$ l'\'el\'ement non nul de $\Gamma_k$ et soit $H$ le sous-groupe fini de $G^\torf$ donn\'e par le corollaire
\ref{corollaire formule car 0 avec existence}. Il est facile de voir que l'on a les isomorphismes
\begin{align*}
Z^1(k,H) \xrightarrow{\sim} H^{\varphi_\sigma}, &\quad b\mapsto b_\sigma \\
Z^1(k,H^\ab)\xrightarrow{\sim}  (H^\ab)^{\varphi_\sigma}, &\quad \b\mapsto \b_\sigma.
\end{align*}
Soit $g\in H^{\varphi_\sigma}$. Puisque $\varphi_\sigma(g)=g$, on a que $N_\sigma(g)=\bar g\in H^\ab$. En particulier, on voit que l'image de
$N_\sigma$ contient l'image de $Z^1(k,H)$ dans $Z^1(k,H^\ab)$.

Soit alors $\beta\in H^1(k,(G^\torf)^\ab)$ un \'el\'ement dans $\im [H^1(k,G^\torf)\to H^1(k,(G^\torf)^\ab)]$. La surjectivit\'e de $H^1(k,H)\to H^1(k,G^\torf)$ nous dit que
l'on peut repr\'esenter $\beta$ par un cocycle $\b$ \`a valeurs dans $H^\ab$ et tel que $\b_\sigma\in N_\sigma(H)$. Pour $\alpha\in\brnral V$, soit
$a\in Z^1(k,\hat G^\ab)$ un cocycle repr\'esentant $\alpha$. On sait alors d'apr\`es le corollaire \ref{corollaire formule car 0 avec existence} que l'on a
$a_\sigma(\b_\sigma)=1$. D'autre part, on sait que $\alpha\cup\beta\in H^2(k,\mu_{n^3d^2})\subset\br k$ est repr\'esent\'e par un cocycle $c$ tel que
\[c_{\sigma,\sigma}=a_\sigma(\asd{\sigma}{}{}{\sigma}{\b})=a_{\sigma}(\b_\sigma^{-1})=(a_\sigma(\b_\sigma))^{-1}=1,\]
car on rappelle que $\b_\sigma\in (H^\ab)^{\varphi_\sigma}$ et alors $\asd{\sigma}{}{}{\sigma}{\b}=\b_\sigma^{-1}$. Il est clair alors que
$\alpha\cup\beta$ est nul, ce qui nous dit que $\alpha$ est orthogonal au sous-ensemble $\im [H^1(k,G^\torf)\to H^1(k,(G^\torf)^\ab)]$ de $H^1(k,(G^\torf)^\ab)$.
\end{proof}

Soit maintenant $k$ un corps de nombres et $V=G\backslash G'$ comme toujours. Ce dernier r\'esultat et la proposition \ref{proposition cohomologique corps local}
nous donnent le r\'esultat annonc\'e, sous la forme plus g\'en\'erale suivante.

\begin{cor}\label{corollaire Brauer-Manin}
Soient $k$ un corps de nombres, $G$ un $k$-groupe alg\'ebrique lin\'eaire plong\'e dans $G'$ semi-simple simplement connexe et
$V=G\backslash G'$. Soit $H$ le $k$-sous-groupe fini
de $G^\torf$ donn\'e par le corollaire \ref{corollaire formule car 0 avec existence}. Soit $L/k$ une extension d\'eployant $H$. Soit
$S$ la r\'eunion de l'ensemble des places non archim\'ediennes ramifi\'ees pour l'extension $L/k$ avec l'ensemble des places (non archim\'ediennes)
se trouvant au-dessus d'un premier qui divise l'ordre du groupe $H$. Soit enfin $T=\Omega_k\smallsetminus S$. Alors la projection
\[\left(\prod_{\Omega_k} V(k_v)\right)^{\brnral}\to \prod_{v\in T} V(k_v),\]
est surjective.
\end{cor}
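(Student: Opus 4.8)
Le plan consiste \`a montrer que l'accouplement de Brauer--Manin associ\'e \`a $\brnral V$ est identiquement nul en toute place de $T$, de sorte que la condition d'orthogonalit\'e d\'efinissant $\left(\prod_{\Omega_k}V(k_v)\right)^{\brnral}$ ne porte en fait que sur les composantes aux places de $S$ ; on conclurait alors en compl\'etant n'importe quel $T$-uplet par le point de base aux places de $S$. D'abord je rappellerais que $V=G\backslash G'$ poss\`ede le $k$-point privil\'egi\'e $x_0$, image de l'\'el\'ement neutre de $G'$, dont la classe caract\'eristique $\delta(x_0)\in H^1(k,G)$ est triviale (ici $\delta:V(k_v)\to H^1(k_v,G)$ d\'esigne l'application issue de la suite exacte $G'(k_v)\to V(k_v)\xrightarrow{\delta}H^1(k_v,G)\to H^1(k_v,G')$). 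L'ingr\'edient cl\'e est la compatibilit\'e usuelle entre l'\'evaluation de Brauer et la dualit\'e locale : pour $\alpha\in\bral V=H^1(k,\hat G^\ab)$ et $x_v\in V(k_v)$, on a
\[\mathrm{inv}_v(\alpha(x_v))=\langle\alpha|_{k_v},\overline{\delta(x_v)}\rangle,\]
o\`u $\overline{\delta(x_v)}$ est l'image de $\delta(x_v)$ dans $H^1(k_v,G^\ab)$ et l'accouplement est celui de Tate entre $H^1(k_v,\hat G^\ab)$ et $H^1(k_v,G^\ab)$. En particulier la contribution en $v$ est nulle d\`es que $\alpha|_{k_v}$ est orthogonal \`a l'image de $V(k_v)$ dans $H^1(k_v,G^\ab)$.

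Ensuite je montrerais l'annulation en toute place $v\in T$. Comme $\brnral V=\bral X=H^1(k,\pic X_{\bar k})$ pour $X$ une compactification lisse, la restriction $\alpha|_{k_v}$ appartient \`a $\brnral(V_{k_v})$. Pour $v$ complexe, $\br k_v=0$ et il n'y a rien \`a faire. Pour $v$ non archim\'edienne, la d\'efinition de $S$ assure que $v$ n'est pas ramifi\'ee dans $L/k$ et ne divise pas $|H|$ ; comme $L$ d\'eploie $H$, le groupe $H$ est alors non ramifi\'e en $v$ et son ordre est premier \`a la caract\'eristique r\'esiduelle, de sorte que les hypoth\`eses de la proposition \ref{proposition cohomologique corps local} sont satisfaites. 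Celle-ci nous dit que $\alpha|_{k_v}$ est orthogonal \`a $\im[H^1(k_v,G)\to H^1(k_v,G^\ab)]$ ; or $G'$ \'etant semi-simple simplement connexe et $k_v$ non archim\'edien on a $H^1(k_v,G')=1$, donc $\delta$ est surjective et l'image de $V(k_v)$ dans $H^1(k_v,G^\ab)$ co\"\i ncide avec $\im[H^1(k_v,G)\to H^1(k_v,G^\ab)]$, d'o\`u l'annulation. Pour $v$ r\'eelle, o\`u la surjectivit\'e de $\delta$ fait d\'efaut, on appliquerait de la m\^eme fa\c con la proposition \ref{proposition BM ne voit pas les places reelles} \`a $V_{k_v}$ : l'image de $V(k_v)$ dans $H^1(k_v,G^\ab)$ reste contenue dans $\im[H^1(k_v,G)\to H^1(k_v,G^\ab)]$, auquel $\alpha|_{k_v}$ est orthogonal, et la contribution est encore nulle.

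Pour conclure, comme toutes les contributions aux places de $T$ sont nulles et que $S$ est fini, la condition d'appartenance \`a $\left(\prod_{\Omega_k}V(k_v)\right)^{\brnral}$ se r\'eduit \`a $\sum_{v\in S}\mathrm{inv}_v(\alpha(x_v))=0$ pour tout $\alpha\in\brnral V$ (ce qui rend d'ailleurs l'accouplement bien d\'efini, la somme \'etant finie). \'Etant donn\'e un \'el\'ement quelconque $(x_v)_{v\in T}$ de $\prod_{v\in T}V(k_v)$, je le compl\'eterais en posant $x_v=x_0$ pour $v\in S$ ; puisque $\delta(x_0)=0$, chaque terme de la somme sur $S$ est nul et le point obtenu appartient \`a l'ensemble de Brauer--Manin tout en se projetant sur $(x_v)_{v\in T}$, d'o\`u la surjectivit\'e. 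Le point d\'elicat n'est pas le remplissage final, qui est imm\'ediat, mais la v\'erification que les hypoth\`eses locales de non-ramification de $H$ et de primalit\'e de son ordre \`a la caract\'eristique r\'esiduelle sont bien satisfaites en toute place de $T$ --- c'est pr\'ecis\'ement le r\^ole de la d\'efinition de $S$ --- ainsi que le traitement s\'epar\'e des places r\'eelles, o\`u l'argument via la proposition \ref{proposition cohomologique corps local} ne s'applique plus et doit \^etre remplac\'e par la proposition \ref{proposition BM ne voit pas les places reelles}.
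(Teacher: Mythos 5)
Votre démonstration est correcte et suit essentiellement la même démarche que celle du texte : réécriture de l'accouplement de Brauer--Manin via la dualité locale et la classe du torseur (ce que le texte note $[Z](P_v)$, image de $\delta(P_v)$ dans $H^1(k_v,G^\ab)$), annulation des contributions aux places de $T$ par la proposition \ref{proposition cohomologique corps local} aux places non archimédiennes (grâce à la définition de $S$) et par la proposition \ref{proposition BM ne voit pas les places reelles} aux places réelles, puis complétion par le point de base aux places de $S$. Seule remarque mineure : la surjectivité de $\delta$ (via $H^1(k_v,G')=1$) est superflue, l'inclusion de l'image de $V(k_v)$ dans $\im[H^1(k_v,G)\to H^1(k_v,G^\ab)]$ étant automatique puisque $\delta$ est à valeurs dans $H^1(k_v,G)$.
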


On rappelle que l'ensemble $(\prod_{\Omega_k} V(k_v))^{\brnral}$ correspond aux \'el\'ements de  $\prod_{\Omega_k} V(k_v)$ qui sont
orthgonaux au groupe $\brnral V$ pour l'accouplement de Brauer-Manin. Pour la d\'efinition de cet accouplement, ainsi qu'une d\'efinition
g\'en\'erale de l'obstruction de Brauer-Manin, on renvoie \`a \cite[\S5.2]{Skor}.

\begin{rem}
Si l'on note $d$ le degr\'e de l'extension d\'eployant $G^\tor$ et $n$ le cardinal de $F$, alors les premiers divisant l'ordre de $H$ sont
ceux qui divisent le produit $nd$ puisque le groupe $H$ donn\'e par le corollaire \ref{corollaire formule car 0 avec existence} est en fin du compte
celui du corollaire \ref{corollaire sous-groupe fini surjectif en cohomologie}. L'ensemble $S$ est alors compos\'e des places non archim\'ediennes
ramifi\'ees pour $L/k$ et de celles au dessus d'un premier divisant $nd$. Par ailleurs, si l'on est oblig\'es d'introduire le sous-groupe fini $H$ dans
l'\'enonc\'e, c'est parce qu'on n'a aucun contr\^ole sur l'extension d\'eployant $H$. En effet, si l'on note $k_F$ et $k_\tor$ les extensions d\'eployant
$F$ et la $n^2d^2$-torsion de $G^\tor$, rien ne nous dit que la compos\'ee de $k_F$ et $k_\tor$ d\'eploie l'extension $H$ de ces deux groupes finis.
\end{rem}

Enfin, si l'on note $\Omega_\infty$ l'ensemble des places archim\'ediennes de $k$ et l'on remarque que $\Omega_\infty\subset T$, on obtient en particulier :

\begin{cor}\label{corollaire approximation reelle}
Sous les hypoth\`eses du corollaire \ref{corollaire Brauer-Manin}, si l'obstruction de Brauer-Manin associ\'ee au sous-groupe $\brnral$ est la seule pour $V$,
alors $V$ v\'erifie l'approximation r\'eelle. En d'autres mots, si $V(k)$ est dense dans $(\prod_{\Omega_k} V(k_v))^{\brnral}$, alors $V(k)$ est
dense dans $\prod_{\Omega_\infty}V(k_v)$, o\`u $\Omega_\infty$ d\'esigne l'ensemble des places archim\'ediennes de $k$.\qed
\end{cor}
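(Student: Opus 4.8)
The plan is to deduce this statement directly from Corollaire \ref{corollaire Brauer-Manin}, which carries all the real content; the present corollary is then a formal consequence of it together with elementary point-set topology. The first thing I would record is that the set $S$ appearing in Corollaire \ref{corollaire Brauer-Manin} consists exclusively of non-archimedean places (the primes ramified in $L/k$ together with those dividing the order of $H$), so that $\Omega_\infty\subseteq T=\Omega_k\smallsetminus S$. Consequently the surjection
\[
\left(\prod_{\Omega_k} V(k_v)\right)^{\brnral}\to \prod_{v\in T} V(k_v)
\]
furnished by Corollaire \ref{corollaire Brauer-Manin} may be composed with the coordinate projection $\prod_{v\in T} V(k_v)\to \prod_{\Omega_\infty} V(k_v)$, which is itself surjective since the omitted factors are nonempty, yielding a surjection
\[
\pi\colon \left(\prod_{\Omega_k} V(k_v)\right)^{\brnral}\to \prod_{\Omega_\infty} V(k_v).
\]

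Next I would carry out the density transfer along $\pi$. Write $B=\bigl(\prod_{\Omega_k} V(k_v)\bigr)^{\brnral}$ and recall that the diagonal image of $V(k)$ lies in $B$, since global points are orthogonal to the entire Brauer group (hence a fortiori to $\brnral V$) by the reciprocity law underlying the Brauer--Manin pairing. The map $\pi$ is continuous, being the restriction to $B$ of a coordinate projection between products of local point sets. Now let $U\subseteq \prod_{\Omega_\infty} V(k_v)$ be any nonempty open set. By surjectivity of $\pi$ the subset $\pi^{-1}(U)\cap B$ is a nonempty open subset of $B$, and by the hypothesis that $V(k)$ is dense in $B$ it must meet $V(k)$. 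Since $V(k)\subseteq B$, this produces a rational point whose image under $\pi$ lies in $U$; as $U$ was arbitrary, the image of $V(k)$ in $\prod_{\Omega_\infty} V(k_v)$ is dense, which is precisely real approximation.

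I do not expect any genuine obstacle here. The single nontrivial input, namely surjectivity onto the places away from $S$ and in particular onto the archimedean factor, is exactly what Corollaire \ref{corollaire Brauer-Manin} supplies, and that corollary is itself where Proposition \ref{proposition BM ne voit pas les places reelles} and the local computations do the work. One technical point worth stressing is that the argument uses only continuity and surjectivity of $\pi$, never openness of its restriction to $B$ (which would require a separate justification); phrasing the density transfer through preimages of open sets, as above, sidesteps that issue completely. The observation $\Omega_\infty\subseteq T$ is what makes the reduction run, and it is immediate from the explicit description of $S$ recorded in the remark following Corollaire \ref{corollaire Brauer-Manin}.
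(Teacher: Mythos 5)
Your proof is correct and follows the same route as the paper, which states this corollary as an immediate consequence of Corollaire \ref{corollaire Brauer-Manin} once one observes $\Omega_\infty\subset T$; you have merely made explicit the standard density-transfer argument along the continuous surjection onto $\prod_{\Omega_\infty}V(k_v)$ that the paper leaves implicit.
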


\begin{proof}[D\'emonstration du corollaire \ref{corollaire Brauer-Manin}]
Il suffit de noter que d'apr\`es le d\'ebut de \cite[\S 4]{GLABrnral}, l'ensemble $(\prod_{\Omega_k}V(k_v))^{\brnral}$ est l'ensemble de familles
de points $(P_v)_{\Omega_k}$ telles que
\[\sum_{v\in\Omega_k}\mathrm{inv}_v(\alpha_v\cup [Z](P_v))=0\in\q/\z,\quad\forall \alpha\in\brnral V\subset H^1(k,\hat G^\ab),\]
o\`u $\mathrm{inv}_v$ est l'application canonique $\br k_v\to\q/\z$ donn\'ee par la th\'eorie du corps de classes et $[Z](P_v)\in H^1(k,G^\ab)$ est la classe
du $k_v$-torseur sous $G^\ab$ obtenu \`a partir du torseur $Z=D(G)\backslash G'\to V$ sous $G^\ab$ et du $k_v$-point $P_v\in V(k_v)$. 

En effet, on voit alors imm\'ediatement \`a partir des propositions \ref{proposition BM ne voit pas les places reelles} et
\ref{proposition cohomologique corps local} qu'on a $\alpha_v\cup [Z](P_v)=0$ pour toute place $v\in T$. Pour les places archim\'ediennes c'est \'evident d'apr\`es la
premi\`ere de ces propositions, tandis que pour toute telle place non archim\'edienne on a que $H$ est non ramifi\'e en tant que $k_v$-groupe et son cardinal est premier \`a
la caract\'eristique r\'esiduelle, ce qui nous donne le droit d'utiliser la deuxi\`eme. Il est alors \'evident que pour $(\beta_v)_T\in\prod_{v\in T} V(k_v)$,
l'\'el\'ement $(0)_S\times (\beta_v)_T\in\prod_{\Omega_k}V(k_v)$ appartient \`a $(\prod_{\Omega_k}V(k_v))^{\brnral}$, ce qui conclut.
\end{proof}

\appendix

\section{Une remarque de Colliot-Th\'el\`ene}
Pendant la r\'edaction de ce texte, Colliot-Th\'el\`ene nous a fait remarquer que l'on peut d\'emontrer le r\'esultat suivant (cf. \cite{ColliotBrnr}) :

\begin{pro}[Colliot-Th\'el\`ene]
Soient $k$ un corps de caract\'eristique $0$, $G$ un $k$-groupe alg\'ebrique lin\'eaire non n\'ecessairement connexe et $r_i:G\hookrightarrow G'_i$ des plongements de $G$ dans
des $k$-groupes semi-simples simplement connexes. Alors
\[\brnr(G\backslash G'_1)\cong\brnr(G\backslash G'_2).\]
\end{pro}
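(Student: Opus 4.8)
The plan is to exhibit $V_1=G\backslash G'_1$ and $V_2=G\backslash G'_2$ as the bases of two torsors with a common total space, and then to show that the total space of a torsor under a semisimple simply connected group has the same unramified Brauer group as its base. Concretely, I would embed $G$ diagonally into $G'_1\times G'_2$ via $(r_1,r_2)$ and put $W=G\backslash(G'_1\times G'_2)$, a smooth geometrically integral $k$-variety. The two projections give $k$-morphisms $p_1\colon W\to V_1$ and $p_2\colon W\to V_2$. Checking on $\bar k$-points, $G'_2$ acts by right translation on the second coordinate; this action is free and simply transitive on the fibres of $p_1$ (the stabiliser of a point being trivial since $r_1$ is a closed immersion), so $p_1$ is a torsor under $G'_2$, and symmetrically $p_2$ is a torsor under $G'_1$. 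As $G'_1\times G'_2$ is semisimple simply connected, both structure groups are.

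The statement then reduces to the following lemma: if $H$ is a semisimple simply connected $k$-group and $f\colon T\to V$ is an $H$-torsor with $V$ smooth geometrically integral (char $0$), then $f^*$ induces an isomorphism $\brnr V\xrightarrow{\sim}\brnr T$. Granting it, applying the lemma to $p_1$ and $p_2$ gives $\brnr V_1\cong\brnr W\cong\brnr V_2$, which is the desired conclusion.

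To prove the lemma, set $K=k(V)$ and let $T_\eta$ be the generic fibre of $f$, a torsor under $H_K$. The crucial point is that $f^*$ identifies $\br(K)$ with $\br(T_\eta)$: over $\bar K$ the torsor is trivial, so $T_{\eta,\bar K}\cong H_{\bar K}$, and since $H$ is semisimple one has $\bar K[H]^*=\bar K^*$ (Rosenlicht); since $H$ is simply connected $\pic H_{\bar K}=0$; and since a split semisimple simply connected group over an algebraically closed field is rational, $\br H_{\bar K}=0$. Feeding these vanishings into the Hochschild--Serre (Sansuc) exact sequence for $T_\eta$ yields $\br(K)\xrightarrow{\sim}\br(T_\eta)$. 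Now take $\beta\in\brnr T$; by purity on the smooth variety $T$ it lies in $\br(T)$, hence its image in $\br(T_\eta)$ equals $f^*\gamma$ for a unique $\gamma\in\br(k(V))$, and comparing inside $\br(k(T))$ gives $\beta=f^*\gamma$. It remains to check that $\gamma$ is unramified on $V$: for a divisorial valuation $v$ of $k(V)/k$ I would choose a divisorial valuation $w$ of $k(T)/k$ lying above it with trivial ramification index and regular residue extension --- possible because the generic fibre of $f$ is geometrically integral --- so that functoriality of the residue maps turns $\partial_w(f^*\gamma)=\partial_w(\beta)=0$ into $\partial_v(\gamma)=0$. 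Conversely $f^*$ plainly sends $\brnr V$ into $\brnr T$, so $f^*$ is the required isomorphism.

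The main obstacle is the lemma, and inside it the descent of the unramified condition along $f$: the identification $\br(T_\eta)=\br(K)$ is the conceptual input that renders the transcendental part of the Brauer group invisible to the torsor, but the surjectivity of $f^*$ onto $\brnr T$ and the residue computation showing $\gamma\in\brnr V$ require the careful choice of the valuation $w$ above $v$. The remaining ingredients --- smoothness and geometric integrality of $W$, and the two torsor structures --- are routine.
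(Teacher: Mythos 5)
Your architecture is the same as the paper's: embed $G$ diagonally in $G'_1\times G'_2$, note that $W=G\backslash(G'_1\times G'_2)$ fibres over $V_1$ and $V_2$ as a torsor under $G'_2$, resp.\ $G'_1$, and reduce everything to the assertion that such a fibration induces an isomorphism on unramified Brauer groups. Your identification $\br k(V)\xrightarrow{\sim}\br T_\eta$ (Sansuc's sequence, Rosenlicht, $\pic H_{\bar K}=0$, $\br H_{\bar K}=0$) is precisely the paper's key lemma and is correct.

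The gap is in the descent of unramifiedness from $k(T)$ to $k(V)$. You assert that for every divisorial valuation $v$ of $k(V)/k$ there is a $w$ on $k(T)$ above it with ramification index $1$ and $\kappa(v)$ algebraically closed in $\kappa(w)$, ``because the generic fibre of $f$ is geometrically integral''. That general principle is false: let $Y=\mathbb{P}^1_{\q}$, let $a$ be a nonsquare, $\gamma=(a,t)\in\br \q(t)$, and let $f:X\to Y$ be the conic bundle whose generic fibre is the conic attached to $(a,t)$. The generic fibre is smooth and geometrically integral and $f^*\gamma=0$ is unramified everywhere, yet $\partial_{t=0}(\gamma)=a\neq 0$; so no valuation $w$ above $v_{t=0}$ with your properties can exist. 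In your situation the statement is true but for a nontrivial reason: at valuations centered at codimension-$1$ points of $V$ itself the fibre of the torsor is smooth and geometrically integral and your argument works, but at valuations ``at infinity'' (centered on a compactification outside $V$) you must produce a model of $T_\eta$ over $\mathcal{O}_v$ whose special fibre contains a geometrically integral component of multiplicity $1$. This is exactly where the paper invokes \cite[Thm. 4.2]{ColliotKunyavskii2} (valid because the stabilizer/structure group is simply connected; ultimately it rests on Steinberg's vanishing of $H^1$ over the cohomological-dimension-$\le 1$ field $\overline{\kappa(v)}((t))$). Without that input, or an equivalent, your residue computation does not close. A minor further difference: the paper works with morphisms of smooth projective compactifications $X\to X_i$, which by purity reduces the set of valuations to the codimension-$1$ points of $X_i$; your direct valuation-theoretic phrasing is legitimate but obscures precisely the boundary valuations where the difficulty sits.
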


La d\'emonstration de ce r\'esultat utilise notamment le th\'eor\`eme 4.2 de \cite{ColliotKunyavskii2}, mais elle ne profite pas de toute sa puissance. On modifie
alors un peu ici le raisonnement de Colliot-Th\'el\`ene (en suivant pourtant les m\^emes lignes) pour d\'emontrer le r\'esultat suivant :

\begin{pro}\label{proposition remarque Colliot}
Soient $k$ un corps de caract\'eristique $0$, $G$ un $k$-groupe alg\'ebrique lin\'eaire non n\'ecessairement connexe et $r_i:G\to G'_i$ des morphismes de groupes alg\'ebriques,
\`a noyau simplement connexe, dans des $k$-groupes semi-simples simplement connexes. Alors
\[\brnr(r_1(G)\backslash G'_1)\cong\brnr(r_2(G)\backslash G'_2).\]
\end{pro}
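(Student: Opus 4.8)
Le plan est de reprendre l'argument de Colliot-Th\'el\`ene pour la proposition pr\'ec\'edente (le cas o\`u les $r_i$ sont des plongements) en introduisant une vari\'et\'e interm\'ediaire de type ``produit''. Posons $N_i=\ker r_i$, simplement connexe par hypoth\`ese, et consid\'erons le morphisme $(r_1,r_2):G\to G'_1\times G'_2$, de noyau $N_1\cap N_2$. Le groupe $G'_1\times G'_2$ \'etant lui aussi semi-simple simplement connexe, on peut former l'espace homog\`ene
\[W:=G\backslash(G'_1\times G'_2),\]
o\`u $G$ agit \`a gauche via $(r_1,r_2)$ (l'action se factorise par $G/(N_1\cap N_2)$, qui s'identifie au stabilisateur). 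On dispose alors des deux projections
\[\pi_i:W\longrightarrow V_i=r_i(G)\backslash G'_i,\qquad i=1,2,\]
induites par les projections $G'_1\times G'_2\to G'_i$, et tout l'enjeu est de montrer que chacune pr\'eserve le groupe de Brauer non ramifi\'e, ce qui donnera $\brnr V_1\cong\brnr W\cong\brnr V_2$.

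La premi\`ere \'etape consiste \`a calculer les fibres des $\pi_i$. Un calcul direct sur les $\bar k$-points (en ramenant par l'action de $G$ la premi\`ere coordonn\'ee \`a l'\'el\'ement neutre) montre que la fibre de $\pi_1$ s'identifie \`a $r_2(N_1)\backslash G'_2$, et de m\^eme celle de $\pi_2$ \`a $r_1(N_2)\backslash G'_1$. C'est ici qu'intervient de fa\c con essentielle l'hypoth\`ese de simple connexit\'e du noyau : comme $N_1$ est simplement connexe et que $r_2(N_1)\cong N_1/(N_1\cap N_2)$, le groupe $r_2(N_1)$ est soit unipotent, soit semi-simple, et dans tous les cas il n'a aucun caract\`ere. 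La fibre $r_2(N_1)\backslash G'_2$ est donc un espace homog\`ene d'un groupe semi-simple simplement connexe \`a stabilisateur sans caract\`ere ; en particulier son groupe de Picard g\'eom\'etrique $\widehat{r_2(N_1)}$ est nul, d'o\`u la trivialit\'e de sa partie alg\'ebrique.

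La deuxi\`eme \'etape est d'appliquer le th\'eor\`eme 4.2 de \cite{ColliotKunyavskii2} \`a chacune des deux fibrations $\pi_i$, ce th\'eor\`eme assurant l'invariance du groupe de Brauer non ramifi\'e le long de telles fibrations (dont les fibres, espaces homog\`enes de groupes semi-simples simplement connexes \`a stabilisateur convenable, sont ``n\'egligeables'' pour $\brnr$). On en d\'eduit $\brnr V_1\cong\brnr W$ et $\brnr V_2\cong\brnr W$, d'o\`u la conclusion. On remarquera que le cas des plongements ($N_1=N_2=1$) redonne exactement l'argument de Colliot-Th\'el\`ene, o\`u les fibres sont les groupes $G'_2$ et $G'_1$ tout entiers, c'est-\`a-dire des torseurs sous un groupe semi-simple simplement connexe.

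Le point d\'elicat sera pr\'ecis\'ement la v\'erification des hypoth\`eses du th\'eor\`eme 4.2 pour ces fibres. Alors que dans la situation de Colliot-Th\'el\`ene les projections $\pi_i$ sont des torseurs sous un groupe semi-simple simplement connexe (cas directement couvert par le th\'eor\`eme), ici les fibres ne sont plus des groupes mais des espaces homog\`enes $r_j(N_i)\backslash G'_j$. Il faudra donc contr\^oler non seulement la partie alg\'ebrique (ce qui r\'esulte de la nullit\'e de $\widehat{r_j(N_i)}$ ci-dessus) mais aussi la partie transcendante du groupe de Brauer non ramifi\'e de ces fibres, et s'assurer que la fibration se comporte bien g\'en\'eriquement (trivialit\'e g\'en\'erique, ou existence d'une section rationnelle). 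C'est l\`a que le raisonnement de Colliot-Th\'el\`ene doit \^etre l\'eg\`erement modifi\'e, l'hypoth\`ese de simple connexit\'e des noyaux $N_i$ garantissant que les fibres restent suffisamment proches de groupes semi-simples simplement connexes pour que l'invariance de $\brnr$ persiste.
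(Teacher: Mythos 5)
Votre strat\'egie (former $W=G\backslash(G'_1\times G'_2)$ et comparer $\brnr W$ \`a $\brnr V_1$ et $\brnr V_2$ via les deux projections) est bien celle du texte, et votre calcul des fibres g\'en\'eriques ($r_2(N_1)\backslash G'_2$ et $r_1(N_2)\backslash G'_1$) est correct. Mais il y a un trou r\'eel, que vous signalez vous-m\^eme sans le combler : pour faire fonctionner l'argument il ne suffit pas que les stabilisateurs $r_j(N_i)\cong N_i/(N_1\cap N_2)$ soient sans caract\`eres, il faut qu'ils soient \emph{simplement connexes}. Un quotient d'un groupe simplement connexe par un sous-groupe normal ne l'est pas en g\'en\'eral (penser \`a $\sln/\mu_n=\mathrm{PGL}_n$, dont le groupe de Picard est $\z/n\neq 0$) ; or c'est la nullit\'e de $\pic$ du stabilisateur g\'eom\'etrique, et pas seulement celle de son groupe des caract\`eres, qui permet de montrer (via \cite[Prop. 6.10]{Sansuc81}) que le groupe de Brauer de la fibre g\'en\'erique --- y compris sa partie transcendante --- se r\'eduit \`a $\br k(Y)$, ce qui est le c{\oe}ur de l'argument d'invariance. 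En l'\'etat, votre ``point d\'elicat'' est donc pr\'ecis\'ement l'endroit o\`u la preuve peut \'echouer.

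La parade du texte est une r\'eduction pr\'ealable que votre proposition omet : on plonge d'abord $G$ dans un troisi\`eme groupe $G'_3$ semi-simple simplement connexe avec $r_3$ \emph{injectif} (toujours possible), et on compare s\'epar\'ement $V_1$ et $V_2$ \`a $V_3=G\backslash G'_3$. On est ainsi ramen\'e au cas o\`u l'un des deux morphismes, disons $r_1$, est un plongement. Alors dans $W=G\backslash(G'_1\times G'_2)$ la premi\`ere projection est un torseur sous $G'_2$ (fibre $G'_2$ tout entier) et la seconde a pour stabilisateur $r_1(N_2)\cong N_2$, qui est simplement connexe par hypoth\`ese puisque $r_1$ est injectif. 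Les deux fibrations rentrent alors dans le cadre de la proposition interm\'ediaire (fibre g\'en\'erique birationnelle \`a un espace homog\`ene d'un groupe semi-simple simplement connexe \`a stabilisateur simplement connexe), o\`u le th\'eor\`eme 4.2 de \cite{ColliotKunyavskii2} ne sert d'ailleurs qu'\`a produire une composante g\'eom\'etriquement int\`egre de multiplicit\'e $1$ dans les fibres en codimension $1$, l'isomorphisme $\br Y\xrightarrow{\sim}\br X$ s'obtenant ensuite par un argument de r\'esidus combin\'e \`a la trivialit\'e du groupe de Brauer (entier) de la fibre g\'en\'erique compactifi\'ee. Int\'egrez cette r\'eduction et le reste de votre plan devient la preuve du texte.
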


On rappelle qu'un $k$-groupe alg\'ebrique lin\'eaire $H$ est simplement connexe si et seulement s'il est une extension d'un groupe semi-simple simplement connexe par un
groupe unipotent. On pourrait demander de fa\c con \'equivalente que $\bar k[H]^*=\bar k^*$ et que $\pic H_{\bar k}=0$.\\

On commence donc la d\'emonstration avec le lemme suivant :

\begin{lem}\label{lemme Colliot}
Soit $k$ un corps de caract\'eristique $0$. Soient $G'$ un $k$-groupe semi-simple simplement connexe, $V$ un espace homog\`ene de $G'$ \`a stabilisateur
g\'eom\'etrique $\bar H$ simplement connexe et $X$ une $k$-vari\'et\'e projective lisse g\'eom\'etriquement connexe $k$-birationnelle \`a $V$. Alors
les fl\`eches naturelles
\[\br k\to\br V\quad\text{et}\quad\br k\to\br X,\]
sont des isomorphismes.
\end{lem}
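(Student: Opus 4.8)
The plan is to reduce the statement to the vanishing of the \emph{geometric} invariants $\pic V_{\bar k}$ and $\br V_{\bar k}$, to descend from $V_{\bar k}$ to $V$ by the Hochschild--Serre spectral sequence, and then to pass from $V$ to $X$ by birational invariance of the Brauer group of smooth proper varieties.

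First I would compute the geometric invariants. Over $\bar k$ the space $V_{\bar k}$ is a single orbit, so after choosing a $\bar k$-point the quotient map $G'_{\bar k}\to V_{\bar k}$ is a torsor under the connected group $\bar H$. Sansuc's fundamental exact sequence for such a torsor (\cite[Prop.~6.10]{Sansuc81}) reads
\[
\hat{\bar H}\to\pic V_{\bar k}\to\pic G'_{\bar k}\to\pic\bar H\to\br V_{\bar k}\to\br G'_{\bar k}.
\]
Because $\bar H$ is simply connected one has $\hat{\bar H}=0$ and $\pic\bar H=0$, and because $G'$ is semi-simple simply connected one has $\pic G'_{\bar k}=0$ and $\br G'_{\bar k}=0$. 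The sequence then forces $\pic V_{\bar k}=0$ and $\br V_{\bar k}=0$. (If one prefers to apply the sequence only to reductive groups, the Levi decomposition of $\bar H$ in characteristic $0$ lets one first replace $\bar H$ by its semi-simple part, the unipotent radical contributing an affine-space bundle that alters neither $\pic$ nor $\br$.)

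Next I would run the Hochschild--Serre spectral sequence $H^p(k,H^q(V_{\bar k},\gm))\Rightarrow H^{p+q}(V,\gm)$. By the Rosenlicht lemma $\bar k[V]^*/\bar k^*=1$, so $H^0(V_{\bar k},\gm)=\bar k^*$, while the previous step gives $H^1(V_{\bar k},\gm)=\pic V_{\bar k}=0$ and $H^2(V_{\bar k},\gm)=\br V_{\bar k}=0$. In the resulting low-degree exact sequence, Hilbert~90 kills $H^1(k,\bar k^*)$ and the two vanishing terms remove every other contribution, so the sequence collapses to an isomorphism $\br k=H^2(k,\bar k^*)\xrightarrow{\sim}\br V$ given by the edge map, which is precisely the natural pullback. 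This proves the first assertion.

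Finally, for $X$ I would use that $\br X=\brnr V$ (purity, as recalled in Section~\ref{section notations et rappels}), an invariant of the function field $k(V)=k(X)$. Via the injection of the Brauer group of a smooth variety into that of its function field, all groups sit inside $\br k(V)$; since constant classes are unramified, $\im[\br k\to\br k(V)]\subseteq\br X\subseteq\br V$. As $\br k\to\br V$ is onto, its image is all of $\br V$, whence $\br X=\br V$ and $\br k\to\br X$ is surjective; it is injective because $\br k\to\br V$ is. Thus $\br k\xrightarrow{\sim}\br X$. I expect the geometric vanishing to be the main obstacle: one must cite the correct torsor sequence for a possibly non-reductive connected structure group and justify $\br G'_{\bar k}=0$ for a simply connected group, after which the arithmetic descent and the birational comparison are formal.
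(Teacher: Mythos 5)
Your proof is correct and follows essentially the same route as the paper: the geometric vanishing of $\pic V_{\bar k}$ and $\br V_{\bar k}$ via Sansuc's exact sequence for the torsor $G'_{\bar k}\to V_{\bar k}$ under $\bar H$, then the Hochschild--Serre low-degree sequence (Sansuc, Lem.~6.3) to get $\br k\xrightarrow{\sim}\br V$, then purity to pass to $X$. The only cosmetic difference is in the last step, where the paper shrinks $V$ to an open $U$ containing all codimension-$1$ points and compares inside $\br k(X)$, while you invoke $\br X=\brnr(k(V)/k)$ and sandwich it between $\im[\br k]$ and $\br V$; both are the same purity argument.
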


\begin{proof}
D'apr\`es ce qui a \'et\'e dit dans les rappels de la section \ref{section notations et rappels}, on a $\bar k[V]^*=\bar k^*$ et $\pic V_{\bar k}=\hat{\bar H}=0$,
d'o\`u la suite exacte donn\'ee par \cite[Lem. 6.3 (i)]{Sansuc81} nous dit que $\br k\xrightarrow{\sim}\brun V$. Il suffit alors pour avoir
$\br k\xrightarrow{\sim} \br V$ de d\'emontrer que $\br V_{\bar k}=0$, ce qui est \'evident d'apr\`es \cite[Prop. 6.10]{Sansuc81} et le fait que
$\pic \bar H =0$ et $\br G'_{\bar k}=0$ car $G'$ est semi-simple simplement connexe.

Maintenant, il existe un ouvert $U$ non vide de $V$ et un $k$-morphisme birationnel $U\to X$. Puisque $X$ est projective et $V$ lisse, on peut supposer que $U$
contient tous les points de codimension 1 de $V$, donc par puret\'e du groupe de Brauer (cf. \cite[Cor. 3.4.2]{ColliotSantaBarbara}), la
restriction $\br V\to\br U$ et par suite la fl\`eche $\br k\to \br U$ sont des isomorphismes. D'autre part, on a le diagramme commutatif suivant 
\[\xymatrix{
\br k \ar[r] \ar[d]_{\sim} & \br X \ar@{^{(}->}[d] \ar[dl] \\
\br U \ar@{^{(}->}[r] & \br (k(X)),
}\] 
o\`u les fl\`eches $\hookrightarrow$ d\'esignent des applications injectives (cf. \cite[Thm. 5.11]{ColliotSansucChili}). Il est facile alors d'en d\'eduire que
l'on a $\br k\xrightarrow{\sim}\br X$.
\end{proof}

Avec ce lemme on peut montrer la proposition suivante, o\`u intervient enfin le th\'eor\`eme 4.2 de \cite{ColliotKunyavskii2} comme on l'avait annonc\'e.

\begin{pro}\label{proposition Colliot}
Soit $k$ un corps de caract\'eristique 0. Soient $X$ et $Y$ deux $k$-vari\'et\'es projectives, lisses, g\'eom\'etriquement connexes et $f:X\to Y$ un morphisme
dominant dont la fibre g\'en\'erique $X_\eta/k(Y)$ est $k(Y)$-birationnelle \`a un espace homog\`ene d'un $k(Y)$-groupe $G'$ semi-simple simplement connexe
\`a stabilisateur simplement connexe. Alors :
\begin{enumerate}
\item En tout point y de codimension $1$ de $Y$ la fibre $X_y/k(y)$ contient une composante connexe g\'eom\'etriquement int\`egre de multiplicit\'e 1.
\item L'application $f^*:\br Y\to\br X$ est un isomorphisme.
\end{enumerate}
\end{pro}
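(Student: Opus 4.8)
The plan is to deduce both assertions from Th\'eor\`eme 4.2 de \cite{ColliotKunyavskii2}, whose engine is a fibration statement for Brauer groups: for a dominant morphism $f:X\to Y$ of smooth proper geometrically connected $k$-vari\'et\'es with geometrically integral generic fibre, it produces the isomorphism $f^*:\br Y\to\br X$ once one controls the Brauer group of the generic fibre and the fibres in codimension $1$. First I would record the global cohomological input. Since $k$ is of characteristic $0$, generic smoothness makes $X_\eta$ a smooth, projective, geometrically connected $k(Y)$-vari\'et\'e, and by hypothesis it is $k(Y)$-birational to a homogeneous space $V$ of the semisimple simply connected $k(Y)$-groupe $G'$ with simply connected stabilizer. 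Applying Lemme \ref{lemme Colliot} over the base field $k(Y)$ gives $\br k(Y)\xrightarrow{\sim}\br X_\eta$, so the generic fibre carries no Brauer class beyond those of the base; this is exactly the ingredient feeding surjectivity of $f^*$, while injectivity is automatic from the geometric integrality of $X_\eta$.

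The local input is assertion (1), that each fibre $X_y$ over a codimension $1$ point is split. Here I would localize: let $R=\mathcal O_{Y,y}$, a discrete valuation ring with fraction field $K=k(Y)$ and residue field $\kappa=k(y)$, and pass to its strict henselization $R^{\mathrm{sh}}$, whose fraction field $K^{\mathrm{nr}}$ has cohomological dimension $\le 1$ (being the fraction field of a strictly henselian, equicharacteristic $0$ discrete valuation ring). Over such a field $V$ acquires a rational point: by Steinberg's theorem $H^1(K^{\mathrm{nr}},\cdot)=1$ for connected groups, and $\br K^{\mathrm{nr}}=0$ kills the Springer obstruction, so a homogeneous space of $G'$ with connected (indeed simply connected) stabilizer has $V(K^{\mathrm{nr}})\neq\emptyset$. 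As $K^{\mathrm{nr}}$ is a large field—fraction field of a henselian local ring—and $V$ is smooth and geometrically integral, $V(K^{\mathrm{nr}})$ is Zariski dense, and the birational identification of $V$ with $X_\eta$ forces $X_\eta(K^{\mathrm{nr}})\neq\emptyset$.

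Next I would exhibit the component. As $X$ is proper over $k$ and $Y$ separated, $f$ is proper, so $\mathcal X:=X\times_Y\spec R^{\mathrm{sh}}$ is proper over $R^{\mathrm{sh}}$; moreover $\spec R^{\mathrm{sh}}\to Y$ is a regular morphism, whence $\mathcal X$ is regular. The point of $X_\eta(K^{\mathrm{nr}})$ is a $K^{\mathrm{nr}}$-point of $\mathcal X_\eta$, which by the valuative criterion extends to a section $\sigma:\spec R^{\mathrm{sh}}\to\mathcal X$ meeting the special fibre $\mathcal X_s=(X_y)_{\kappa^{\mathrm{sep}}}$ at a point $P$. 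Writing $\mathcal X_s$ as the zero locus of a uniformizer $\pi$, the relation $\sigma^*\pi=\pi\notin\mathfrak m_{R^{\mathrm{sh}}}^2$ shows $\pi\notin\mathfrak m_{\mathcal X,P}^2$, so $\pi$ belongs to a regular system of parameters and $\mathcal X_s$ is regular at $P$. Thus $P$ lies on a unique component $D'$ of $(X_y)_{\kappa^{\mathrm{sep}}}$, which is reduced—of multiplicity $1$—and geometrically reduced.

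The hard part will be to upgrade geometric reducedness to geometric \emph{integrality}, i.e. to keep $D'$ from being permuted with its Galois conjugates. Here I would exploit the connected group: by Stein factorization ($Y$ normal, generic fibre geometrically integral) $X_y$ is geometrically connected, and by the relative weak factorization theorem in characteristic $0$ the property of admitting a geometrically integral multiplicity-$1$ fibre over $y$ is invariant among smooth proper $Y$-models birational to $X$. It therefore suffices to check it on a model $\mathcal V$ of $V$ over $R$ on which $G'$ extends to a reductive group scheme $\mathcal G'$—automatic in the intended application, where $G'$ is the base change of a $k$-groupe and so has good reduction. On such a model the special fibre is itself a homogeneous space of the connected group $(\mathcal G')_\kappa$, hence has a dense orbit that is geometrically irreducible and generically smooth, giving the desired component; feeding (1) together with $\br k(Y)\xrightarrow{\sim}\br X_\eta$ into Th\'eor\`eme 4.2 de \cite{ColliotKunyavskii2} then yields $f^*:\br Y\xrightarrow{\sim}\br X$. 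I expect the only genuine difficulty to be the bad-reduction case of $G'$ for the fully general statement, which I would dispose of either by arranging a model with open orbit, or by invoking directly the local hypothesis of Th\'eor\`eme 4.2, which asks only for the unramified rational points produced above.
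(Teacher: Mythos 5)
The gap is in your proof of assertion (1). Your argument correctly produces, via Springer--Steinberg over the fraction field $K^{\mathrm{nr}}$ of the strict henselization (a perfect field of cohomological dimension $\le 1$) followed by the section/regularity computation, a reduced multiplicity-one component of $(X_y)_{\kappa^{\mathrm{sep}}}$. But this is strictly weaker than what must be proved: the component of $X_y$ over $\kappa=k(y)$ on which it sits need not be \emph{geometrically} irreducible, because the section over $R^{\mathrm{sh}}$ is not Galois-equivariant and may reduce onto a component that is permuted with its conjugates. The existence of $R^{\mathrm{sh}}$-points never suffices for splitness of the fibre: a conic bundle whose fibre over $y$ degenerates to a pair of conjugate lines admits a regular proper model with $R^{\mathrm{sh}}$-points and yet has no geometrically integral component in that fibre --- and carries a nontrivial residue at $y$, which is exactly what assertion (2) needs to exclude. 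So the Galois-invariance of the component is the whole point, and it is precisely the content of Th\'eor\`eme 4.2 de \cite{ColliotKunyavskii2}, whose proof exploits the homogeneous-space structure of the generic fibre (simple connectedness of $G'$ is used to connect any two points of $V(K^{\mathrm{nr}})$, so that the induced map to the set of multiplicity-one components of the special fibre has a one-point, hence Galois-fixed, image). Neither of your fallbacks supplies this step: $G'$ is only a $k(Y)$-group, so good reduction over $\mathcal{O}_{Y,y}$ is not among the hypotheses, and even in the intended application (Proposition \ref{proposition remarque Colliot}) it is unavailable at the codimension-one points of $Y$ lying in the boundary of the open locus where the map is a fibration in homogeneous spaces; and ``the local hypothesis of Th\'eor\`eme 4.2'' is not the existence of unramified points but the homogeneous-space structure of the generic fibre, which is exactly what the paper feeds into it.

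For comparison, the paper does not reprove the local statement: it cites Th\'eor\`eme 4.2 de \cite{ColliotKunyavskii2} applied to the discrete valuation ring $A_y=\mathcal{O}_{Y,y}$ for a model containing the homogeneous space as an open subscheme, and then \cite[Prop. 3.9(b)]{ColliotCIME} for the birational invariance of splitness (the role your appeal to weak factorization is meant to play). It then proves (2) directly: by Lemme \ref{lemme Colliot} any $\alpha\in\br X$ agrees in $\br(k(X))$ with the image of a unique $\beta\in\br(k(Y))$, whose residue at every codimension-one point vanishes by (1), so that $\beta\in\br Y$ by purity. Your treatment of (2) is the same mechanism in substance, but it is misattributed: Th\'eor\`eme 4.2 de \cite{ColliotKunyavskii2} is the local splitting statement, not a global fibration theorem for Brauer groups, so the residue-and-purity argument still has to be carried out (or another reference supplied).
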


\begin{proof}
Dans le cas o\`u $X$ contient un ouvert $k(Y)$-isomorphe \`a un espace homog\`ene de $G'$ \`a stabilisateur simplement connexe, le premier point
de l'\'enonc\'e est une cons\'equence du th\'eor\`eme 4.2 de \cite{ColliotKunyavskii2}. En effet, il suffit de consid\`erer l'anneau de valuation discr\`ete $A_y$
de corps de fractions $k(Y)$ correspondant \`a $y$ (son spectre c'est la r\'eunion de $\eta$ et $y$ en tant que sch\'ema), puis de consid\`erer la fibre $X_{A_y}$
sur $A_y$. La proposition 3.9(b) de \cite{ColliotCIME} nous permet alors de d\'emontrer le cas g\'en\'eral.

Pour le deuxi\`eme point, on dispose du diagramme commutatif
\[\xymatrix{
\br Y \ar[r] \ar@{^{(}->}[d] & \br X \ar@{^{(}->}[rd] \ar[d] \\
\br(k(Y)) \ar[r]^{\sim} & \br X_{\eta} \ar@{^{(}->}[r] & \br (k(X)),
}\]
o\`u les fl\`eches $\hookrightarrow$ d\'esignent des injections (cf. \cite[Thm. 5.11]{ColliotSansucChili}) et celle d'en bas \`a gauche est un isomorphisme d'apr\`es
le lemme \ref{lemme Colliot}. Il est facile de voir alors que pour $\alpha\in \br X$ il existe un unique $\beta\in\br(k(Y))$ ayant la m\^eme image que $\alpha$ dans
$\br (k(X))$. Il suffit alors de montrer que $\beta\in\br Y$ pour conclure. Or, d'apr\`es \cite[\S 4]{ColliotCIME} et le fait que $X_y$ admet une composante
connexe g\'eom\'etriquement int\`egre de multiplicit\'e 1, le r\'esidu de $\beta$ au point $y$ (i.e. son image par l'application
$\delta_{A_y}:\br k(Y)\to H^1(k(y),\q/\z)$) est trivial. Ceci \'etant vrai pour tout point $y$ de $Y$ de codimension 1, les th\'eor\`emes de puret\'e de Grothendieck
(cf. \cite[Thm. 4.1.1, Prop. 4.2.3]{ColliotSantaBarbara}) nous disent que $\beta$ provient bien de $\br Y$, ce qui conclut.
\end{proof}

Enfin, pour conclure cet appendice, on passe \`a la d\'emonstration de la proposition \ref{proposition remarque Colliot}.

\begin{proof}[D\'emonstration de la proposition \ref{proposition remarque Colliot}]
Quitte \`a plonger $G$ dans un troisi\`eme groupe $G'_3$ semi-simple simplement connexe et \`a comparer respectivement $V_1$ et $V_2$ avec $V_3:=G\backslash G'_3$,
on peut se restreindre au cas o\`u l'un des $r_i$, disons $r_1$, est \`a noyau trivial. Soit alors $H$ le noyau de $r_2$. On consid\`ere le plongement
$r:G\hookrightarrow G'_1\times G'_2$ induit par $r_1$ et $r_2$. La $k$-vari\'et\'e $V=G\backslash (G'_1\times G'_2)$ se projette sur $V_1=r_1(G)\backslash G'_1$
et sur $V_2=r_2(G)\backslash G'_2$. La premi\`ere projection fait de $V$ un espace principal homog\`ene sous $G'_2$ sur $V_1$, tandis que la deuxi\`eme fait
de $V$ un espace homog\`ene sous $G'_1$ sur $V_2$ \`a stabilisateur $r_1(H)$, donc simplement connexe. Il existe des $k$-compactifications lisses $X$, $X_1$
et $X_2$ de $V$, $V_1$ et $V_2$ respectivement et des morphismes $X\to X_1$ et $X\to X_2$ \'etendant $V\to V_1$ et $V\to V_2$ (cf. par exemple
\cite[\S 1.2.2]{BorovoiKunyavskii}). Il suffit alors d'appliquer deux fois la proposition \ref{proposition Colliot} pour trouver des isomorphismes
$\br X_1\xrightarrow{\sim}\br X$ et $\br X_2\xrightarrow{\sim}\br X$, ce qui conclut.
\end{proof}

\bibliographystyle{alpha}
\bibliography{Brnral}

\end{document}